\newtheorem{cor}[subsection]{Corollary}
\newtheorem{lem}[subsection]{Lemma}
\newtheorem{prop}[subsection]{Proposition}
\newtheorem{conj}[subsection]{Conjecture}
\newtheorem{thm}[subsection]{Theorem}
\newtheorem{rem}[subsection]{Remark}
\theoremstyle{definition}
\theoremstyle{remark}
\newcommand{\nc}{\newcommand}
\nc{\renc}{\renewcommand} \nc{\ssec}{\subsection}
\nc{\sssec}{\subsubsection} \nc{\on}{\operatorname}
\renc{\l}{\lambda}
\nc\ol{\overline} \nc\ul{\underline} \nc\wt{\widetilde}
\nc\tboxtimes{\wt{\boxtimes}} \nc{\alp}{\alpha}
\nc{\ZZ}{{\mathbb Z}} \nc{\NN}{{\mathbb N}} \nc{\CC}{{\mathbb C}}
\nc{\OO}{{\mathbb O}} \renc{\SS}{{\mathbb S}} \nc{\DD}{{\mathbb
D}}
\nc{\Fq}{{\mathbb F}_q} \nc{\Fqb}{\ol{{\mathbb F}_q}}
\nc{\Ql}{\ol{{\mathbb Q}_\ell}} \nc{\id}{\text{id}} \nc\X{\mathcal
X}
\nc{\Hom}{\on{Hom}} \nc{\Lie}{\on{Lie}} \nc{\Loc}{\on{Loc}}
\nc{\Pic}{\on{Pic}} \nc{\Bun}{\on{Bun}} \nc{\IC}{\on{IC}}
\nc{\Aut}{\on{Aut}} \nc{\rk}{\on{rk}} \nc{\Sh}{\on{Sh}}
\nc{\Perv}{\on{Perv}} \nc{\pos}{{\on{pos}}} \nc{\Conv}{\on{Conv}}
\nc{\Sph}{\on{Sph}} \nc{\Sym}{\on{Sym}}
\nc{\BunBb}{\overline{\Bun}_B} \nc{\Buno}{\overset{o}{\Bun}}
\nc{\BunPb}{{\overline{\Bun}_P}}
\nc{\BunBM}{\overline{\Bun}_{B(M)}}
\nc{\BunPbw}{{\widetilde{\Bun}_P}}
\nc{\BunBP}{\widetilde{\Bun}_{B,P}} \nc{\GUb}{\overline{G/U}}
\nc{\GUPb}{\overline{G/U(P)}}
\nc{\iso}{{\stackrel{\sim}{\longrightarrow}}}
\nc{\Hhom}{\underline{\on{Hom}}} \nc\syminfty{\on{Sym}^{\infty}}
\nc\lal{\ol{\lambda}} \nc\xl{\ol{x}} \nc\thl{\ol{\theta}}
\nc\nul{\ol{\nu}} \nc\mul{\ol{\mu}} \nc\Sum\Sigma
\nc{\oX}{\overset{o}{X}{}}
\nc{\M}{{\mathcal M}} \nc{\N}{{\mathcal N}} \nc{\F}{{\mathcal F}}
\nc{\D}{{\mathcal D}} \nc{\Q}{{\mathcal Q}} \nc{\Y}{{\mathcal Y}}
\nc{\G}{{\mathcal G}} \nc{\E}{{\mathcal E}} \nc{\CalC}{{\mathcal
C}}
\nc\Dh{\widehat{\D}}
\nc{\C}{{\mathcal C}} \nc{\K}{{\mathcal K}}
\renewcommand{\H}{{\mathcal H}}
\nc{\T}{{\mathcal T}} \nc{\V}{{\mathcal V}} \renc{\P}{{\mathcal
P}} \nc{\A}{{\mathcal A}} \nc{\B}{{\mathcal B}} \nc{\U}{{\mathcal
U}}
\nc{\Gr}{\on{Gr}}
\nc{\frn}{{\check{\mathfrak u}(P)}}
\nc\f{{\mathfrak f}}
\nc{\q}{{\mathfrak q}} \nc{\p}{{\mathfrak p}} \nc{\s}{{\mathfrak
s}} \nc\w{\text{w}}
\nc\Spec{\on{Spec}} \nc\Mod{\on{Mod}}
\nc{\tw}{\widetilde{\mathfrak t}} \nc{\pw}{\widetilde{\mathfrak
p}} \nc{\qw}{\widetilde{\mathfrak q}} \nc{\jw}{\widetilde j}
\nc{\grb}{\overline{\Gr}} \nc{\I}{\mathcal I}
\nc{\lambdach}{{\check\lambda}} \nc{\Lambdach}{{\check\Lambda}{}}
\nc{\much}{{\check\mu}} \nc{\omegach}{{\check\omega}}
\nc{\nuch}{{\check\nu}} \nc{\etach}{{\check\eta}}
\nc{\alphach}{{\check\alpha}} \nc{\betach}{{\check\beta}}
\nc{\rhoch}{{\check\rho}} \nc{\ch}{{\check h}}
\nc{\Hb}{\overline{\H}}
\nc{\BA}{{\mathbb{A}}} \nc{\BC}{{\mathbb{C}}} \nc{\BQ}{{\mathbb{Q}}}
\nc{\BM}{{\mathbb{M}}} \nc{\BN}{{\mathbb{N}}}
\nc{\BP}{{\mathbb{P}}} \nc{\BR}{{\mathbb{R}}}
\nc{\BZ}{{\mathbb{Z}}} \nc{\BS}{{\mathbb{S}}} \nc{\BV}{{\mathbb{V}}}
\nc{\CA}{{\mathcal{A}}} \nc{\CB}{{\mathcal{B}}}
\nc{\CE}{{\mathcal{E}}} \nc{\CF}{{\mathcal{F}}}
\nc{\CG}{{\mathcal{G}}} \nc{\CH}{{\mathcal{H}}}
\nc{\CI}{{\mathcal{I}}} \nc{\CL}{{\mathcal{L}}}
\nc{\CM}{{\mathcal{M}}} \nc{\CN}{{\mathcal{N}}}
\nc{\CO}{{\mathcal{O}}} \nc{\CP}{{\mathcal{P}}}
\nc{\CQ}{{\mathcal{Q}}} \nc{\CR}{{\mathcal{R}}}
\nc{\CS}{{\mathcal{S}}} \nc{\CT}{{\mathcal{T}}}
\nc{\CU}{{\mathcal{U}}} \nc{\CV}{{\mathcal{V}}}  \nc{\CY}{{\mathcal Y}}
\nc{\CW}{{\mathcal{W}}} \nc{\CZ}{{\mathcal{Z}}}
\nc{\cM}{{\check{\mathcal M}}{}} \nc{\csM}{{\check{\mathcal A}}{}}
\nc{\oM}{{\overset{\circ}{\mathcal M}}{}}
\nc{\obM}{{\overset{\circ}{\mathbf M}}{}}
\nc{\oCA}{{\overset{\circ}{\mathcal A}}{}}
\nc{\obA}{{\overset{\circ}{\mathbf A}}{}}
\nc{\ooM}{{\overset{\circ}{M}}{}}
\nc{\osM}{{\overset{\circ}{\mathsf M}}{}}
\nc{\vM}{{\overset{\bullet}{\mathcal M}}{}}
\nc{\nM}{{\underset{\bullet}{\mathcal M}}{}}
\nc{\oD}{{\overset{\circ}{\mathcal D}}{}}
\nc{\obD}{{\overset{\circ}{\mathbf D}}{}}
\nc{\oA}{{\overset{\circ}{\mathbb A}}{}}
\nc{\op}{{\overset{\bullet}{\mathbf p}}{}}
\nc{\cp}{{\overset{\circ}{\mathbf p}}{}}
\nc{\oU}{{\overset{\bullet}{\mathcal U}}{}}
\nc{\oZ}{{\overset{\circ}{\mathcal Z}}{}}
\nc{\ofZ}{{\overset{\circ}{\mathfrak Z}}{}}
\nc{\ff}{{\mathfrak{f}}} \nc{\fv}{{\mathfrak{v}}}
\nc{\fa}{{\mathfrak{a}}} \nc{\fb}{{\mathfrak{b}}}
\nc{\fd}{{\mathfrak{d}}} \nc{\fe}{{\mathfrak{e}}}
\nc{\fg}{{\mathfrak{g}}} \nc{\fgl}{{\mathfrak{gl}}}
\nc{\fh}{{\mathfrak{h}}} \nc{\fri}{{\mathfrak{i}}}
\nc{\fj}{{\mathfrak{j}}} \nc{\fk}{{\mathfrak{k}}}
\nc{\fm}{{\mathfrak{m}}} \nc{\fn}{{\mathfrak{n}}}
\nc{\ft}{{\mathfrak{t}}} \nc{\fu}{{\mathfrak{u}}}
\nc{\fw}{{\mathfrak{w}}} \nc{\fz}{{\mathfrak{z}}}
\nc{\fp}{{\mathfrak{p}}} \nc{\frr}{{\mathfrak{r}}}
\nc{\fs}{{\mathfrak{s}}} \nc{\fsl}{{\mathfrak{sl}}}
\nc{\hsl}{{\widehat{\mathfrak{sl}}}}
\nc{\hgl}{{\widehat{\mathfrak{gl}}}}
\nc{\hg}{{\widehat{\mathfrak{g}}}}
\nc{\chg}{{\widehat{\mathfrak{g}}}{}^\vee}
\nc{\hn}{{\widehat{\mathfrak{n}}}}
\nc{\chn}{{\widehat{\mathfrak{n}}}{}^\vee}
\nc{\fA}{{\mathfrak{A}}} \nc{\fB}{{\mathfrak{B}}}
\nc{\fD}{{\mathfrak{D}}} \nc{\fE}{{\mathfrak{E}}}
\nc{\fF}{{\mathfrak{F}}} \nc{\fG}{{\mathfrak{G}}} \nc{\fH}{{\mathfrak{H}}}
\nc{\fI}{{\mathfrak{I}}} \nc{\fJ}{{\mathfrak{J}}}
\nc{\fK}{{\mathfrak{K}}} \nc{\fL}{{\mathfrak{L}}}
\nc{\fM}{{\mathfrak{M}}} \nc{\fN}{{\mathfrak{N}}}
\nc{\frP}{{\mathfrak{P}}} \nc{\fQ}{{\mathfrak{Q}}}
\nc{\fT}{{\mathfrak{T}}} \nc{\fU}{{\mathfrak{U}}}
\nc{\fV}{{\mathfrak{V}}} \nc{\fW}{{\mathfrak{W}}}
\nc{\fX}{{\mathfrak{X}}} \nc{\fY}{{\mathfrak{Y}}}
\nc{\fZ}{{\mathfrak{Z}}}
\nc{\ba}{{\mathbf{a}}}
\nc{\bb}{{\mathbf{b}}} \nc{\bc}{{\mathbf{c}}}
\nc{\be}{{\mathbf{e}}} \nc{\bj}{{\mathbf{j}}}
\nc{\bn}{{\mathbf{n}}} \nc{\bp}{{\mathbf{p}}}
\nc{\bq}{{\mathbf{q}}} \nc{\br}{{\mathbf{r}}} \nc{\bt}{{\mathbf{t}}}
\nc{\bfu}{{\mathbf{u}}} \nc{\bv}{{\mathbf{v}}}
\nc{\bx}{{\mathbf{x}}} \nc{\by}{{\mathbf{y}}}
\nc{\bw}{{\mathbf{w}}} \nc{\bA}{{\mathbf{A}}}
\nc{\bB}{{\mathbf{B}}} \nc{\bC}{{\mathbf{C}}}
\nc{\bD}{{\mathbf{D}}} \nc{\bF}{{\mathbf{F}}}
\nc{\bH}{{\mathbf{H}}} \nc{\bK}{{\mathbf{K}}}
\nc{\bM}{{\mathbf{M}}} \nc{\bN}{{\mathbf{N}}}
\nc{\bO}{{\mathbf{O}}} \nc{\bS}{{\mathbf{S}}} \nc{\bT}{{\mathbf{T}}}
\nc{\bV}{{\mathbf{V}}} \nc{\bW}{{\mathbf{W}}}
\nc{\bX}{{\mathbf{X}}}
\nc{\bY}{{\mathbf{Y}}} \nc{\bP}{{\mathbf{P}}}
\nc{\bZ}{{\mathbf{Z}}} \nc{\bh}{{\mathbf{h}}}
\nc{\sA}{{\mathsf{A}}} \nc{\sB}{{\mathsf{B}}}
\nc{\sC}{{\mathsf{C}}} \nc{\sD}{{\mathsf{D}}}
\nc{\sE}{{\mathsf{E}}} \nc{\sF}{{\mathsf{F}}}
\nc{\sK}{{\mathsf{K}}} \nc{\sL}{{\mathsf{L}}}
\nc{\sM}{{\mathsf{M}}} \nc{\sO}{{\mathsf{O}}}
\nc{\sQ}{{\mathsf{Q}}} \nc{\sP}{{\mathsf{P}}}
\nc{\sT}{{\mathsf{T}}} \nc{\sZ}{{\mathsf{Z}}}
\nc{\sV}{{\mathsf{V}}}
\nc{\sfp}{{\mathsf{p}}} \nc{\sr}{{\mathsf{r}}}
\nc{\st}{{\mathsf{t}}} \nc{\sfb}{{\mathsf{b}}}
\nc{\sfc}{{\mathsf{c}}} \nc{\sd}{{\mathsf{d}}}
\nc{\sz}{{\mathsf{z}}}
\nc{\BK}{{\bar{K}}}
\nc{\tA}{{\widetilde{\mathbf{A}}}}
\nc{\tB}{{\widetilde{\mathcal{B}}}}
\nc{\tg}{{\widetilde{\mathfrak{g}}}} \nc{\tG}{{\widetilde{G}}}
\nc{\TM}{{\widetilde{\mathbb{M}}}{}}
\nc{\tO}{{\widetilde{\mathsf{O}}}{}}
\nc{\tU}{{\widetilde{\mathfrak{U}}}{}} \nc{\TZ}{{\tilde{Z}}}
\nc{\tx}{{\tilde{x}}} \nc{\tbv}{{\tilde{\bv}}}
\nc{\tfP}{{\widetilde{\mathfrak{P}}}{}} \nc{\tz}{{\tilde{\zeta}}}
\nc{\tmu}{{\tilde{\mu}}}
\nc{\urho}{\underline{\rho}} \nc{\uB}{\underline{B}}
\nc{\uC}{{\underline{\mathbb{C}}}} \nc{\ui}{\underline{i}}
\nc{\uj}{\underline{j}} \nc{\ofP}{{\overline{\mathfrak{P}}}}
\nc{\oB}{{\overline{\mathcal{B}}}}
\nc{\og}{{\overline{\mathfrak{g}}}} \nc{\oI}{{\overline{I}}}
\nc{\eps}{\varepsilon} \nc{\hrho}{{\hat{\rho}}}
\nc{\blambda}{{\boldsymbol{\lambda}}}
\nc{\one}{{\mathbf{1}}} \nc{\two}{{\mathbf{t}}}
\nc{\Rep}{{\mathop{\operatorname{\rm Rep}}}}
\nc{\Tot}{{\mathop{\operatorname{\rm Tot}}}}
\nc{\Ker}{{\mathop{\operatorname{\rm Ker}}}}
\nc{\Hilb}{{\mathop{\operatorname{\rm Hilb}}}}
\nc{\End}{{\mathop{\operatorname{\rm End}}}}
\nc{\Ext}{{\mathop{\operatorname{\rm Ext}}}}
\nc{\CHom}{{\mathop{\operatorname{{\mathcal{H}}\it om}}}}
\nc{\GL}{{\mathop{\operatorname{\rm GL}}}}
\nc{\gr}{{\mathop{\operatorname{\rm gr}}}}
\nc{\Id}{{\mathop{\operatorname{\rm Id}}}}
\nc{\defi}{{\mathop{\operatorname{\rm def}}}}
\nc{\length}{{\mathop{\operatorname{\rm length}}}}
\nc{\supp}{{\mathop{\operatorname{\rm supp}}}}
\nc{\Cliff}{{\mathsf{Cliff}}}
\nc{\Fl}{{\mathsf{Fl}}} \nc{\Fib}{{\mathsf{Fib}}}
\nc{\Coh}{{\mathsf{Coh}}} \nc{\FCoh}{{\mathsf{FCoh}}}
\nc{\reg}{{\text{\rm reg}}}
\nc{\cplus}{{\mathbf{C}_+}} \nc{\cminus}{{\mathbf{C}_-}}
\nc{\cthree}{{\mathbf{C}_*}} \nc{\Qbar}{{\bar{Q}}}
\nc{\bOmega}{{\overline{\Omega}}}
\nc{\seq}[1]{\stackrel{#1}{\sim}}
\nc{\aff}{\operatorname{aff}}
\nc{\ep}{\varepsilon}
\title{Cactus group and monodromy of Bethe vectors}
\dedicatory{To my wife Sasha}
\author{Leonid Rybnikov}
\email{leo.rybnikov@gmail.com}
\address{National Research University Higher School of Economics,
Department of Mathematics,
International Laboratory of Representation Theory and Mathematical Physics,
and Institute for Information Transmission Problems,
20 Myasnitskaya st,
Moscow 101000, Russia}
\begin{document}
\maketitle

\begin{abstract} Cactus group is the fundamental group of the real locus of the Deligne-Mumford moduli space of stable rational curves. This group appears naturally as an analog of the braid group in coboundary monoidal categories. We define an action of the cactus group on the set of Bethe vectors of the Gaudin magnet chain corresponding to arbitrary semisimple Lie algebra $\fg$. Cactus group appears in our construction as a subgroup in the Galois group of Bethe Ansatz equations. Following the idea of Pavel Etingof, we conjecture that this action is isomorphic to the action of the cactus group on the tensor product of crystals coming from the general coboundary category formalism. We prove this conjecture in the case $\fg=\fsl_2$ (in fact, for this case the conjecture almost immediately follows from the results of Varchenko on asymptotic solutions of the KZ equation and crystal bases). We also present some conjectures generalizing this result to Bethe vectors of shift of argument subalgebras and relating the cactus group with the Berenstein-Kirillov group of piecewise-linear symmetries of the Gelfand-Tsetlin polytope.
\end{abstract}

\section{Introduction}

\subsection{Gaudin algebras.} The Gaudin model was introduced in \cite{G1} as a spin model related
to the Lie algebra $\fsl_2$, and generalized to the case of arbitrary
semisimple Lie algebras in \cite{G}, 13.2.2. The generalized Gaudin
model has the following algebraic interpretation.

Let $\{x_a\},\ a=1,\dots,\dim\fg$, be an orthonormal basis of $\fg$ with respect to the standard invariant inner product. For any $x\in U(\fg)$, consider the element $x^{(i)}=1\otimes\dots\otimes 1\otimes
x\otimes 1\otimes\dots\otimes 1\in U(\fg)^{\otimes n}$ ($x$ stands on the $i$th place). Let $V_{\lambda}$ be an irreducible representation of a semisimple (reductive) Lie algebra $\fg$ with the highest weight
$\lambda$. For any collection of integral dominant weights $(\lambda)=\lambda_1,\dots,\lambda_n$, let
$\BV_{\ul{\l}}=V_{\l_1}\otimes\dots\otimes V_{\l_n}$. We fix a collection $\ul{z}:=(z_1,z_2,\ldots,z_n)$ of pairwise distinct complex numbers. The Hamiltonians of Gaudin model are the following commuting operators
acting in the space $\BV_{\ul{\l}}$:
\begin{equation}\label{quadratic}
H_i=\sum\limits_{j\neq i}\sum\limits_{a=1}^{\dim\fg}
\frac{x_a^{(i)}x_a^{(j)}}{z_i-z_j}.
\end{equation}

We can treat the $H_i$ as elements of the universal enveloping
algebra $U(\fg)^{\otimes n}$. In \cite{FFR}, the existence of  a
large commutative subalgebra $\A(\ul{z})=\A(z_1,\dots,z_n)\subset
U(\fg)^{\otimes n}$
containing $H_i$ was proved. This subalgebra commutes with the diagonal action of $\fg$ on $U(\fg)^{\otimes n}$ and in fact it is a maximal commutative subalgebra in $[U(\fg)^{\otimes n}]^{\fg}$.

For $\fg=\fsl_2$, the subalgebra $\A(\ul{z})\subset
U(\fg)^{\otimes n}$ is generated by the elements $H_i$ and the center of $U(\fg)^{\otimes n}$. In other cases, the algebra
$\A(\ul{z})$ has also some new generators known as higher
Gaudin Hamiltonians. This algebra is known to be a polynomial algebra with $\frac{n-1}{2}\dim\fg+\frac{n+1}{2}\rk\fg$ generators. We will call $\A(\ul{z})$ the \emph{Gaudin algebra}.

\subsection{Bethe Ansatz conjecture.} The main problem in Gaudin model is the problem of simultaneous
diagonalization of (higher) Gaudin Hamiltonians. It follows from the
\cite{FFR} construction that all elements of
$\A(\ul{z})\subset U(\fg)^{\otimes n}$ are invariant with
respect to the diagonal action of $\fg$, and therefore it is
sufficient to diagonalize the algebra $\A(\ul{z})$ in the
subspace $\BV_{\ul{\l}}^{sing}\subset \BV_{\ul{\l}}$ of singular vectors
with respect to the diagonal
action of $\fg$. In many important cases, the Gaudin eigenproblem is solved by the \emph{algebraic Bethe Ansatz} method which provides an explicit (but complicated) construction of joint eigenvectors for $\A(\ul{z})$ in $\BV_{\ul{\l}}^{sing}$, see \cite{FFR} for more details. The famous Bethe Ansatz conjecture states that this method always works, i.e. gives an eigenbasis for $\A(\ul{z})$ in $\BV_{\ul{\l}}^{sing}$.
In particular, the conjecture says that, for generic $\ul{z}$, the algebra $\A(\ul{z})$ has simple spectrum in
$\BV_{\ul{\l}}^{sing}$. The latter was proved in \cite{MTV07} for $\fg=\fsl_N$. More precisely, it is proved that the space $\BV_{\ul{\l}}^{sing}$ is always cyclic as $\A(\ul{z})$-module, and hence $\A(\ul{z})$ has simple spectrum whenever acts by semisimple operators. On the other hand, for \emph{real} values of the parameters $z_i$, the algebra $\A(\ul{z})$ is generated by Hermitian (hence semisimple) operators, hence has simple spectrum.

Generally, Bethe eigenvectors (and the corresponding eigenvalues) are not rational functions of the $z_i$'s, and hence there is a nontrivial Galois group action on Bethe eigenvectors. Our first motivation for the present work is to understand this Galois group action.

\subsection{Closure of the family $\A(\ul{z})$.} The family $\A(\ul{z})$, as defined, is parameterized by a noncompact complex algebraic variety of configurations of pairwise distinct points on the complex line. On the other hand, every subalgebra is (in appropriate sense) a point of some Grassmann variety which is compact. Hence there is a family of commutative subalgebras which extends the family $\A(\ul{z})$ and is parameterized by some compact variety. Our second motivation for the present work is to understand this compactification. According to Aguirre, Felder and Veselov \cite{AFV}, the closure of the family of quadratic Gaudin Hamiltonians is parameterized by the Deligne-Mumford compactification $\overline{M_{0,n+1}}$ of the moduli space of stable rational curves with $n+1$ marked points. We prove that the closure of the family $\A(\ul{z})$ is also parameterized by $\overline{M_{0,n+1}}$ (i.e. there are no additional blow-ups). Furthermore, we prove that the natural topological operad structure on $\overline{M_{0,n+1}}$ is compatible with that on commutative subalgebras of $U(\fg)^{\otimes n}$. This allows to describe explicitly the algebras corresponding to boundary points of $\overline{M_{0,n+1}}$ and to prove that they always have a cyclic vector in $\BV_{\ul{\l}}^{sing}$. We deduce from this the simple spectrum property for the subalgebras attached to \emph{all real points} of $\overline{M_{0,n+1}}$.

This allows us to regard the eigenbasis (or, more precisely, the set of $1$-dimensional eigenspaces) of $\A(\ul{z})$ in $\BV_{\ul{\l}}^{sing}$ as a \emph{covering} of the space $\overline{M_{0,n+1}}(\BR)$. Denote the fiber of this covering at a point $\ul{z}\in\overline{M_{0,n+1}}(\BR)$ by $B_{\ul{\l}}(\ul{z})$. The fundamental group of $\overline{M_{0,n+1}}(\BR)$ (called \emph{pure cactus group} $PJ_n$) acts on this set. This gives a homomorphism from $PJ_n$ to the Galois group of Bethe eigenvalues.

\begin{rem} Generally, this Galois group is bigger than the image of $PJ_n$. The smallest example in which this occurs is $\fg=\fsl_2$, $n=3$, $\l_1=\l_2=\l_3=2$: since $\overline{M_{0,4}}(\BR)=\BR\BP^1$ we have $PJ_3=\BZ$ and hence its image is commutative. On the other hand, the Galois group is $S_3$ (this was recently shown by Azad Saifullin \cite{Sai}).
\end{rem}

\subsection{Cactus group.} The group $PJ_n:=\pi_1(\overline{M_{0,n+1}}(\BR))$  can be described as follows. Let $J_n$ be the group with the generators $s_{p,q},\ 1\le p<q\le n$, and the defining relations
\begin{equation*}
\begin{array}{l}s_{p,q}^2=e;\\
s_{p_1,q_1}s_{p_2,q_2}=s_{p_2,q_2}s_{p_1,q_1}\ \text{if}\ q_1<p_2;\\
s_{p_1,q_1}s_{p_2,q_2}s_{p_1,q_1}=s_{p_1+q_1-q_2,p_1+q_1-p_2}\ \text{if}\ p_1\le p_2<q_2\le q_1.
\end{array}
\end{equation*}
There is an epimorphism $\pi:J_n\to S_n$ which takes $s_{p,q}$ to the involution reversing the segment $\{p,\ldots,q\}\subset\{1,\ldots,n\}$. According to \cite{DJS,D}, $J_n$ is the orbifold fundamental group of $\overline{M_{0,n+1}}(\BR)/S_n$, and hence $PJ_n\simeq \Ker\pi$. In \cite{HK} the groups $J_n$ and $PJ_n$ were named \emph{cactus group} and \emph{pure cactus group}, respectively.

It was observed by Henriques and Kamnitzer in \cite{HK} that the groups $J_n$ and $PJ_n$ naturally arise in \emph{coboundary categories}. That is, monoidal category with a functorial involutive isomorphism $s_{X,Y}:X\otimes Y\to Y\otimes X$, called \emph{commutor}, satisfying certain natural relations. Coboundary category is an analog of braided monoidal category where the role of the braid group $B_n$ is played by the cactus group $J_n$. In particular, the pure cactus group $PJ_n$ acts by endomorphisms of $X_1\otimes\ldots\otimes X_n$ for any collection of objects of any coboundary category.

The main example of a coboundary category is the category of $\fg$-\emph{crystals} for a Kac-Moody algebra $\fg$. Loosely speaking, $\fg$-crystal is the $q\to\infty$ limit of a $U_q(\fg)$-module. In this limit, $U_q(\fg)$-modules are replaced by colored oriented graphs with the vertices representing the basis vectors and the edges representing the action of the Chevalley generators of $U_q(\fg)$. There is a well-defined tensor product on $\fg$-crystals which is \emph{not} symmetric, but tensor products of the same objects in different order are still isomorphic. The commutor is a functorial choice of such isomorphism satisfying some natural axioms. The commutor for the tensor product of crystals for finite-dimensional $\fg$ was first defined by Henriques and Kamnitzer in \cite{HK} in a purely combinatorial way. Later in \cite{KT} Kamnitzer and Tingley gave an equivalent definition in terms of the unitarized $R$-matrix. For general Kac-Moody algebra, the crystal commutor was defined by Savage in \cite{S}.

Consider the tensor product $\CB_{\l_1}\otimes\ldots\otimes\CB_{\l_n}$ of the $\fg$-crystals with highest weights $\l_1,\ldots,\l_n$. The commutor gives an action of the pure cactus group on the set $\CB_{\ul{\l}}$ of highest elements of this tensor product. Note that $\CB_{\ul{\l}}$ has the same cardinality as $B_{\ul{\l}}(\ul{z})$.

\begin{conj}\label{conj-etingof} \emph{(Pavel Etingof)} The actions of $PJ_n$ on $B_{\ul{\l}}(\ul{z})$ and on $\CB_{\ul{\l}}$ are isomorphic.
\end{conj}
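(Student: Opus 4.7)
The plan is to construct the isomorphism at a common basepoint given by a caterpillar (totally degenerate) stable curve $\ul{z}_0\in\overline{M_{0,n+1}}(\BR)$, and then to show that the monodromy of the covering $B_{\ul\l}\to\overline{M_{0,n+1}}(\BR)$ matches the Henriques--Kamnitzer commutor generator by generator of $PJ_n$. By the operad-compatibility of the closure of $\A(\ul z)$ established earlier in the paper, $\A(\ul z_0)$ factors as a product of commuting pieces, one per internal node of the caterpillar; each is a \emph{shift-of-argument subalgebra} $\A_\chi(\fg)\subset U(\fg)$ acting on a two-fold tensor product $V_\mu\otimes V_{\l_k}$, where $\mu$ runs over the highest weights of irreducible summands of $V_{\l_1}\otimes\ldots\otimes V_{\l_{k-1}}$. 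Consequently the eigenlines of $\A(\ul z_0)$ in $\BV_{\ul\l}^{sing}$ are indexed by compatible nested sequences $V_{\mu_2}\subset V_{\l_1}\otimes V_{\l_2}$, $V_{\mu_3}\subset V_{\mu_2}\otimes V_{\l_3}$, and so on, together with a choice of line in each isotypic piece.

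The crystal set $\CB_{\ul\l}$ admits the parallel description as nested sequences of highest elements $b_k\in\CB_{\mu_{k-1}}\otimes\CB_{\l_k}$, by coassociativity of the tensor product of $\fg$-crystals. To upgrade this parallelism to a canonical bijection $B_{\ul\l}(\ul z_0)\iso\CB_{\ul\l}$, I would establish the local three-point statement: eigenlines of a single $\A_\chi(\fg)$ on $V_\mu\otimes V_\nu$ are canonically labelled by highest elements of $\CB_\mu\otimes\CB_\nu$. The candidate construction is geometric: the Feigin--Frenkel spectrum of $\A_\chi$ is identified with $^L\fg$-opers on $\BP^1$ with two regular singularities of residues determined by $\mu,\nu$, and the asymptotics $\chi\to\infty$ (equivalently, collision of the two marked points along a preferred real direction) reduce the joint eigenbasis to the eigenbasis of a Cartan subalgebra on an associated graded, which by Kashiwara's theorem recovers the crystal basis. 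For $\fg=\fsl_2$ this is exactly the Varchenko/KZ input used in the paper. Iterating this node-by-node over the caterpillar $\ul z_0$ yields the desired bijection.

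With the basepoint bijection in place, I would match the two $PJ_n$-actions on a generating set of loops. Based at $\ul z_0$, $PJ_n$ is generated by loops $\sigma_{p,q}$ passing through the neighbouring caterpillar $\ul z_{p,q}$ obtained by reversing the block $\{p,\ldots,q\}$; parallel transport of Bethe eigenlines along the minimal real geodesic from $\ul z_0$ to $\ul z_{p,q}$ factors through a reordering $\BV_{\ul\l}\iso\BV_{s_{p,q}\ul\l}$ that, under the caterpillar-to-crystal dictionary above, is intertwined by the Sch\"utzenberger/Lusztig involution on the intermediate subcrystal $\CB_{\l_p}\otimes\ldots\otimes\CB_{\l_q}$ --- i.e.\ exactly the Henriques--Kamnitzer commutor. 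Closing up the loop then yields $s_{p,q}$ on both sides, and the cactus relations follow for free from the operadic coherence of the closure of $\A(\ul z)$ on $\overline{M_{0,n+1}}$ combined with the coboundary coherence of the crystal category.

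The main obstacle, and the reason the paper itself fully settles the conjecture only in the $\fsl_2$ case, is the local three-point crystal-to-Bethe identification for general $\fg$: one needs both a canonical labelling of $\A_\chi$-eigenlines on $V_\mu\otimes V_\nu$ by elements of $\CB_\mu\otimes\CB_\nu$ and a proof that the half-loop monodromy on $\overline{M_{0,4}}(\BR)=\BR\BP^1$ realises the crystal commutor. I expect this to be accessible through the oper/Miura-transformation formalism of Feigin--Frenkel and the compatibility of Miura transformations with Kashiwara's involution $\ast$, but a complete proof for arbitrary $\fg$ plausibly requires a crystal-level realisation of Bethe eigenvectors at each caterpillar point, which is currently available in full only in type $A$ via the Mukhin--Tarasov--Varchenko construction.
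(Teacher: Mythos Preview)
Your overall strategy --- fix a caterpillar (totally degenerate) basepoint, identify the Bethe eigenlines there with highest elements of the crystal via the nested Clebsch--Gordan decomposition, then match generators of the cactus group by computing the local transport on $\ol{M_{0,4}}(\BR)$ --- is sound and is essentially the second of the paper's two proofs (Section~\ref{sect-piecelinear}). Two points need correction.

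First, a structural error: the commuting pieces of $\A(\ul z_0)$ at a $0$-dimensional stratum are \emph{not} shift-of-argument subalgebras $\A_\chi\subset U(\fg)$. By the operad compatibility (Theorem~\ref{thm-closure}) they are images, under the maps $D_{M_1,\ldots,M_k}$ and $I_M$, of the \emph{two-point Gaudin algebras} $\A(w_1,w_2)\subset[U(\fg)^{\otimes 2}]^\fg$. For $\fg=\fsl_2$ this algebra is generated by $C_1,C_2,C_{12}$ and merely records the isotypic decomposition of $V_\mu\otimes V_\nu$; for higher rank it is strictly larger, and in any case it is not the shift-of-argument family, which lives in a single copy of $U(\fg)$ and is treated as a separate problem in Section~\ref{sect-conjectures}.

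Second, and more substantively, you assert but do not prove the key local step: that the transport along a $1$-stratum of $\ol{M_{0,4}}(\BR)$ realises the crystal commutor. This is precisely the content that must be supplied. The paper does it for $\fsl_2$ in two independent ways. In Section~\ref{sect-main} the argument runs over the open chambers $D_\sigma$ rather than over vertices: Varchenko's theorem identifies the asymptotic KZ solutions with a crystal base, and one checks directly that analytic continuation across the wall to the neighbouring chamber multiplies the solution by $q^{-\mu_I}$, which is exactly the factor that unitarizes $R$ to $\ol R$. In Section~\ref{sect-piecelinear} the argument is elementary: along the edge $[0,1]\subset\ol{M_{0,4}}(\BR)$ the Gaudin algebra is generated by $H(t)=(1-t)C_{12}-tC_{23}$, whose spectrum on $\BV_{\ul\l}^{sing}$ is simple and real for all $t$, so the transport preserves the ordering of eigenvalues; since $H(0)=C_{12}$ and $H(1)=-C_{23}$ this forces the transport to be order-reversing, which is exactly the $\rm{HIVES}$ associator $\psi$ of \cite{HK2}. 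Your sketch gestures at the first mechanism but executes neither. Your diagnosis of the obstruction for general $\fg$ --- the lack of a canonical crystal labelling of eigenlines in the three-point problem --- is accurate and agrees with the paper's own assessment in Section~\ref{sect-conjectures}.
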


We prove this conjecture for $\fg=\fsl_2$ in two different ways. The first way, suggested by Pavel Etingof, is to use the Drinfeld-Kohno theorem in its ``crystal'' limit $q\to\infty$. This relates the Gaudin model (on the KZ side) with the crystal (on the quantum group side). In fact, all necessary ingredients for this are already contained in the papers of Varchenko \cite{Var} and Kamnitzer--Tingley \cite{KT}. The second way is to relate the monodromy of Bethe vectors with the ``hive'' realization of the category of crystals from \cite{HK2}. For $\fg=\fsl_2$, the eigenvectors of $\A(\ul{z})$ at the vertices (i.e. $0$-dimensional strata) of $\ol{M_{0,n+1}}$ are indexed by integer points of a convex polytope depending on $\ul{\l}$ and on the vertex. The transports along $1$-dimensional strata of $\ol{M_{0,n+1}}(\BR)$ give some natural bijections between the sets of integer points of the polytopes at different vertices of $\ol{M_{0,n+1}}$. We show that these bijections come from piecewise linear transformations of the corresponding polytopes, and relate them to the octahedron recurrence. This gives another (purely combinatorial) proof of Conjecture~\ref{conj-etingof}.

\subsection{The paper is organized as follows.} In section~\ref{sect-deligne-mumford} we recall some basic definitions and well-known facts regarding the Deligne-Mumford compactification $\ol{M_{0,n+1}}$. In section~\ref{sect-bethe-alg} we summarize the known facts about the family of Bethe algebras $\A(\ul{z})$ and prove our first main result that the closure of this family is parameterized by $\ol{M_{0,n+1}}$ and that for every algebra from the closure the module $\BV_{\ul{\l}}^{sing}$ is cyclic. In sections~\ref{sect-kamnitzer}~and~\ref{sect-varchenko} we summarize the necessary ingredients (from \cite{KT} and \cite{Var}, respectively) for the proof of Etingof's conjecture. In section~\ref{sect-main} we prove Etingof's conjecture for $\fg=\fsl_2$ (this is our second main result). In section~\ref{sect-piecelinear} we describe the piecewise linear transformations of the polytopes arising from our construction and give a combinatorial proof of Etingof's conjecture. Section~\ref{sect-conjectures} is devoted to conjectures generalizing our results.

\subsection{Acknowledgements.} I am grateful to Pavel Etingof for stating Conjecture~\ref{conj-etingof} and for explaining to me the unitarization construction of the crystal commutor from \cite{KT}. I am happy to thank Arkady Berenstein, Nick Early, Alexander Goncharov, Joel Kamnitzer, Evgeny Mukhin and Vitaly Tarasov for extremely useful discussions and references.

The author was supported by the Russian President grant MK-2121.2014.1. The latest version of the article was prepared within the framework of the Academic Fund Program at the National Research University Higher School of Economics (HSE) in 2015- 2016 (grant №15-01-0062)  and supported within the framework of a subsidy granted to the HSE by the Government of the Russian Federation for the implementation of the Global Competitiveness Program.

\section{The space $\overline{M_{0,n+1}}$}\label{sect-deligne-mumford}

\subsection{} Let $\overline{M_{0,n+1}}$ denote the Deligne-Mumford space of stable rational curves with $n+1$ marked points. The points of $\overline{M_{0,n+1}}$ are isomorphism classes of curves of genus $0$, with $n+1$ ordered marked points and possibly with nodes, such that each component has at least $3$ distinguished points (either marked points or nodes). One can represent the combinatorial type of such a curve as a tree with $n+1$ leaves with inner vertices representing irreducible components of the corresponding curve, inner edges corresponding to the nodes and the leaves corresponding to the marked points. Informally, the topology of $\overline{M_{0,n+1}}$ is determined by the following rule: when some of the distinguished points (marked or nodes) from the same component collide, they bubble off into a new component.

The space $\overline{M_{0,n+1}}$ is a smooth algebraic variety. It can be regarded as a compactification of the configuration space $M_{0,n+1}$ of ordered $(n+1)$-tuples $(z_1,z_2,\ldots,z_{n+1})$ of pairwise distinct points on $\BC\BP^1$ modulo the automorphism group $PGL_2(\BC)$. Since the group $PGL_2(\BC)$ acts transitively on triples of distinct points, we can fix the $(n+1)$-th point to be $\infty\in\BC\BP^1$ and fix the sum of coordinates of other points to be zero. Then the space $M_{0,n+1}$ gets identified with the quotient ${\rm Conf}_n / \BC^*$ where ${\rm Conf}_n:=\{(z_1,\ldots,z_n)\in\BC^n\ |\ z_i\ne z_j,\ \sum\limits_{i=1}^nz_i=0\}$, and the group $\BC^*$ acts by dilations. Under this identification of $M_{0,n+1}$, the space $\overline{M_{0,n+1}}$ is just the GIT quotient by $\BC^*$ of the iterated blow-up of the subspaces of the form $\{z_{i_1}=z_{i_2}=\ldots=z_{i_k}\}$ in $\BC^{n-1}$. The space $\overline{M_{0,n+1}}$ comes with the tautological bundles $\CL_i$ whose fiber is the line representing the point $z_i$. The total space $\widetilde{M_{0,n+1}}$ of the tautological line bundle $\CL_{n+1}$ is then just the blow-up, without taking the quotient. ${\rm Conf}_n$ is a Zariski open subset in $\widetilde{M_{0,n+1}}$.

The space $\overline{M_{0,n+1}}$ is stratified as follows. The strata are indexed by the combinatorial types of stable rational curves, i.e. by rooted trees with $n$ leaves colored by the marked points $z_1,\ldots,z_n$ (the root is colored by $z_{n+1}=\infty$). Let $T$ be such a tree, then the corresponding stratum $M_T$ is the product of $M_{0,k(I)}$ over all inner vertices $I$ of $T$ with $k(I)$ being the index of $I$. In particular, $0$-dimensional strata correspond to binary rooted trees with $n$ (ordered) leaves. The stratum corresponding to a tree $T$ lies in the closure of the one corresponding to a tree $T'$ if and only if $T'$ is obtained from $T$ by contracting some edges.

\subsection{Operad structure on $\ol{M_{0,n+1}}$.} The spaces $\ol{M_{0,n+1}}$ form a topological operad. This means that one can regard each point of the space $\ol{M_{0,n+1}}$ as an $n$-ary operation with the inputs at marked points $z_1,\ldots,z_n$ and the output at $z_{n+1}$. Then one can substitute any operation of this form to each of the inputs. More precisely, for any partition of the set $\{1,\ldots,n\}$ into the disjoint union of subsets $M_1,\ldots,M_k$ with $|M_i|=m_i\ge1$ there is a natural substitution map $\gamma_{k;M_1,\ldots,M_k}:\ol{M_{0,k+1}}\times\prod\limits_{i=1}^k\ol{M_{0,m_i+1}}\to\ol{M_{0,n+1}}$ which attaches the $i$-th curve $C_i\in\ol{M_{0,m_i+1}}$ to the $i$-th marked point of the curve $C_0\in\ol{M_{0,k+1}}$ by gluing the $m_{i}+1$-th marked point of each $C_i$ with the $i$-th marked point of $C_0$. One can extend the definition of $\ol{M_{0,n+1}}$ and take $\ol{M_{0,2}}=pt$ (defining the (unique) curve $C\in\ol{M_{0,2}}$ also to be a point). Then the substitution maps with $m_i=1$ are still well-defined and moreover all substitution maps $\gamma_{k;M_1,\ldots,M_k}$ are compositions of the elementary ones with $m_1=\ldots=m_{k-1}=1$.

The compositions of the substitution maps are indexed by rooted trees describing the combinatorial type of the (generic) resulting curves. In particular, each stratum of $\ol{M_{0,n+1}}$ is just the image of the open stratum of an appropriate product $\prod\ol{M_{0,m+1}}$ under some composition of substitution maps.

\subsection{Charts on $\wt{M_{0,n+1}}$.} We will use the following set of charts which form an atlas on $\widetilde{M_{0,n+1}}$. Let $T$ be a tree as above and $\sigma$ be an ordering of its leaves. We call $\sigma$ \emph{compatible} with $T$ if there is an embedding of $T$ into the real plane such that all inner vertices of the tree are in the lower halfplane, all leaves are on the horizontal line $y=0$ and the $x$-coordinates of them are ordered according to $\sigma$.

To any binary rooted tree $T$ compatible with the ordering $\sigma$ one can assign a set of coordinates in appropriate neighborhood $U_{T,\sigma}$ of the corresponding $0$-dimensional stratum $\ul{z}_T$. Let $<$ be the partial ordering of the vertices of $T$ with the root being the minimal element. Let $I(i,j)$ be the maximal inner vertex comparable with the both leaves $z_i$ and $z_j$. The coordinate ring of the open subset $U_{T,\sigma}\subset\widetilde{M_{0,n+1}}$ is generated by the functions $\frac{z_i-z_j}{z_k-z_l}$ for all $i,j$ such that $I(i,j)\not<I(k,l)$ and by $z_i-z_j$ for all $i,j$. We choose the coordinates $u_I$ on $U_{T,\sigma}$ indexed by inner vertices $I$ of the tree $T$ recursively as follows. Let $l(I)\in\{1,\ldots,n\}$ be such that $\sigma(l(I))$ is the maximal index of the $z_i$'s in the left branch at the vertex $I$. Analogously, define $r(I)\in\{1,\ldots,n\}$ such that $\sigma(r(I))$ the minimal index of the $z_i$'s in the right branch at the vertex $I$. For the root vertex $I_0$, we set $u_{I_0}:=z_{r(I_0)}-z_{l(I_0)}$; for any other vertex $I$ let $I_0,I_1,\ldots,I_k=I$ be the shortest way from the root to $I$, then $u_I:=(z_{r(I)}-z_{l(I)})\prod\limits_{j=0}^{k-1}u_{I_j}^{-1}$. Equivalently, $u_I:=\frac{z_{r(I)}-z_{l(I)}}{z_{r(I')}-z_{l(I')}}$ where $I'$ is the preceding vertex (i.e. $I':=\max\{J\in T\ |\ J<I\}$).

Let us describe the stratum $\wt{M_{T'}}\subset\wt{M_{0,n+1}}$ corresponding to a rooted tree $T'$ in the local coordinates determined by a binary rooted tree $T$. The following is clear from the definitions:

\begin{prop}\label{prop-strata-in-coordinates} The stratum $\widetilde{M_{T'}}$ has a nonempty intersection with $U_{T,\sigma}$ if and only if $T'$ is obtained from $T$ by contracting some edges. In the latter case, $\widetilde{M_{T'}}$ is a subset of $U_{T,\sigma}$ defined as follows: $u_I\ne0$ if the (unique) edge of $T$ which ends at $I$ is contracted in $T'$, and $u_I=0$ else.
\end{prop}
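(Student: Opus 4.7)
The plan is to verify the proposition by unwinding the definitions of the local coordinates $u_I$ and matching them against the stratification through the operad structure already described. Before anything else, I observe that the functions $u_I$, indexed by the $n-1$ inner vertices of the binary tree $T$, form a local coordinate system on $U_{T,\sigma}$: they are the standard coordinates produced by the iterated sequence of blow-ups of diagonals that defines $\wt{M_{0,n+1}}$, and they all vanish at the $0$-dimensional stratum $\ul{z}_T$.

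For the ``only if'' direction, suppose $\wt{M_{T'}} \cap U_{T,\sigma} \neq \emptyset$. Then $\ul{z}_T$ lies in the closure of $\wt{M_{T'}}$, so the combinatorial type $T$ is a further degeneration of the type $T'$. In the language of trees, this says exactly that $T$ is a refinement of $T'$, or equivalently that $T'$ is obtained from $T$ by contracting some subset of its inner edges. This is the standard combinatorial description of the adjacency of boundary strata of $\ol{M_{0,n+1}}$.

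For the ``if'' direction, I would use the operadic substitution map identifying $\wt{M_{T'}}$ with the open stratum of a product $\prod_I \wt{M_{0,k(I)+1}}$ indexed by the inner vertices $I$ of $T'$. Let $S$ be the set of inner vertices of $T$ whose edge to their parent is contracted in $T'$. Reading the coordinate ring of $U_{T,\sigma}$ through this substitution map, the coordinates $u_I$ for $I\in S$ become the coordinates on the factors $\wt{M_{0,k(I)+1}}$ (they parameterize the smoothing of the node bubbled off at the corresponding vertex of $T'$), while the $u_I$ for $I\notin S$ correspond to nodes that still persist in $T'$ and must therefore vanish along the stratum. An induction on the depth of $T$, built from the elementary substitutions that bubble off a single component at a time, then shows that $\wt{M_{T'}} \cap U_{T,\sigma}$ is cut out precisely by $u_I = 0$ for $I\notin S$ and $u_I \ne 0$ for $I \in S$ on the open part of the stratum.

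The main obstacle is the coordinate-matching in this induction: one must check at each elementary substitution step that the newly introduced coordinates on the attached smaller moduli space fit as sub-coordinates into the tree-indexed recursion defining the $u_I$'s. Concretely, this amounts to verifying directly from the recursive definition $u_I = (z_{r(I)} - z_{l(I)})/(z_{r(I')} - z_{l(I')})$ that the elementary operadic gluing compatibly identifies the scale ratios at the new inner vertex with the coordinates on the bubbled-off factor. Once this compatibility is checked for a single bubbling-off, the general statement follows by iteration, and the proposition is proved.
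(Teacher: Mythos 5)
Your proposal is correct and takes essentially the same route as the paper, which in fact offers no argument at all beyond declaring the statement ``clear from the definitions'': like the paper, you simply unwind the recursive definition of the coordinates $u_I$ against the blow-up/operadic description of the strata, and your conclusion ($u_I\ne 0$ exactly at the vertices whose edge is contracted in $T'$) matches the proposition. The coordinate-matching at a single elementary bubbling-off that you flag as the remaining obligation is precisely the routine verification the paper implicitly treats as obvious, so nothing essential is missing.
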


\begin{rem} The space $\wt{M_{0,n+1}}$ can be regarded as a closure of the complement of the hyperplane arrangement in $\BC^{n-1}$ formed by the hyperplanes $\{z_i=z_j\}$ for all $i,j$. De Concini and Procesi generalized this construction to any hyperplane arrangement. Namely, in \cite{DCP} they construct the \emph{wonderful closure} of the complement to any hyperplane arrangement, which is smooth and whose boundary is a divisor with normal crossings. They also defined the set of charts generalizing $U_{T,\sigma}$.
\end{rem}

\subsection{Real locus of $\ol{M_{0,n+1}}$.} The space $\overline{M_{0,n+1}}$ is a projective algebraic variety defined over any field (in fact it is defined over $\BZ$), hence we can consider the real loci $\ol{M_{0,n+1}}(\BR)$ and $\widetilde{M_{0,n+1}}(\BR)$ of the spaces $\ol{M_{0,n+1}}$ and $\widetilde{M_{0,n+1}}$, respectively. Note that the space ${\rm Conf}_n(\BR)$ is disconnected, and the connected components are the chambers $D_\sigma:=\{(z_1,\ldots,z_n)\ |\ z_{\sigma(1)}<\ldots<z_{\sigma(n)}\}$ for all permutations $\sigma\in S_n$. We have the atlas on $\widetilde{M_{0,n+1}}(\BR)$ formed by the same charts $U_{T,\sigma}$.

\begin{rem} The open set $U_{T,\sigma}^+:=\{(u_I)\in U_{T,\sigma}\ |\ u_I>0\ \forall I\}$ is the chamber $D_\sigma:=z_{\sigma(1)}<\ldots<z_{\sigma(n)}$ in ${\rm Conf}_n$.
\end{rem}

The space $\wt{M_{0,n+1}}$ can be described as a cell complex. The cells of the codimension $k$ are indexed by pairs $(T,\sigma)$ where $T$ is a rooted tree (not necessarily binary) with $k$ inner vertices and $n$ leaves colored by $z_1,\ldots,z_n$, and $\sigma$ is a compatible ordering of its leaves up to the following equivalence. Two orderings are equivalent if one is obtained from another by reversing the order of the descendants of any inner vertex of $T$, except the root. The closure poset structure on the pairs $(T,\sigma)$ is defined as follows: $(T,\sigma)\le (T',\sigma')$ if $T'$ is obtained from $T$ by contracting some edges and $\sigma$ is equivalent to $\sigma'$ with respect to $T$. In particular, the maximal elements of this poset are indexed by the symmetric group $S_n$, and the corresponding open cells are $D_\sigma$. Two open cells $D_\sigma$ and $D_{\sigma'}$ have a common codimension one face if and only if $\sigma'\sigma^{-1}$ is an involution in $S_n$ which reverses some segment $\{p,p+1,\ldots,q-1,q\}\subset\{1,\ldots,n\}$. Clearly, for any neighboring $D_\sigma$ and $D_{\sigma'}$, there is a tree $T$ compatible with both $\sigma$ and $\sigma'$ such that $\sigma'\sigma^{-1}$ reverses the order of all descendants of some inner vertex $I\in T$. In particular, the differential at $0\in U_{T,\sigma}$ of the gluing function $\varphi:U_{T,\sigma}\to U_{T,\sigma'}$ is just changing the sign of $u_I$.

The vertices of $\wt{M_{0,n+1}}$ correspond to \emph{binary} rooted trees with a compatible ordering of leaves up to equivalence. The edges of $\wt{M_{0,n+1}}$ then correspond to almost binary trees (with exactly one $4$-valent inner vertex).

\begin{rem} \emph{In \cite{HK,Kap} the same cell complex is described in (equivalent) terms of \emph{ordered bracketings}. The cells of the codimension $k$ are indexed by \emph{ordered bracketings} of the product $x_1x_2\ldots x_n$, i.e. pairs consisting of a permutation $\sigma\in S_n$ and a partial bracketing of the product $x_{\sigma(1)}x_{\sigma(2)}\ldots x_{\sigma(n)}$ with $k$ pairs of brackets, up to the equivalence relation. Two bracketings are equivalent if one is obtained from another by reversing the ordering inside any pair of brackets, for example $(x_1x_2(x_3x_4))(x_5x_6)$ is equivalent to $((x_3x_4)x_2x_1)(x_6x_5)$. The closure poset structure on the equivalence classes of bracketings is defined as follows: for equivalence classes of ordered bracketings $\alpha,\beta$ one has $\alpha\le \beta$ if there are representatives $a,b$ of $\alpha,\beta$, respectively, such that $a$ is obtained from $b$ by inserting some pairs of brackets. The vertices of $\wt{M_{0,n+1}}$ are indexed by equivalence classes of \emph{complete} ordered bracketings.}
\end{rem}

\subsection{Cactus group.} One defines the fundamental groupoid of $\wt{M_{0,n+1}}(\BR)$ as follows. The objects are the components of the open stratum of $\wt{M_{0,n+1}}(\BR)$ which are the chambers $D_\sigma$ for all $\sigma\in S_n$. The mophisms from $D_\sigma$ to $D_{\sigma'}$ are the homotopy classes of paths which connect some inner points of the components $D_\sigma$ and $D_{\sigma'}$ and cross the strata of codimension $1$ transversely. Since the symmetric group $S_n$ acts simply transitively on the chambers, this groupoid is in fact the orbifold fundamental group of $\widetilde{M_{0,n+1}}(\BR)/S_n$. Denote this group by $J_n$. Clearly, the group $J_n$ is generated by the homotopy classes of paths connecting \emph{neighboring} open cells (i.e. the open cells having common face of codimension $1$). Thus there are the following generators of $J_n$.

For positive integers $p\le q$, denote by $[p,q]$ the set $\{p,p+1,\ldots,q-1,q\}$. Let $\ol{s_{p,q}}\in S_n$ be the involution reversing the segment $[p,q]\subset[1,n]$. The chambers $D_\sigma$ and $D_{\sigma'}$ are neighboring if $\sigma'\sigma^{-1}$ is $\ol{s_{p,q}}$ for some $p\le q$. Denote by $s_{p,q}$ the element of $J_n$ corresponding to the shortest path from $D_\sigma$ to $D_{\sigma'}$. Then the elements $s_{p,q}$ with $1\le p<q\le n$ generate $J_n$ and the defining relations are
\begin{equation}
\begin{array}{l}s_{p,q}^2=e;\\
s_{p_1,q_1}s_{p_2,q_2}=s_{p_2,q_2}s_{p_1,q_1}\ \text{if}\ q_1<p_2;\\
s_{p_1,q_1}s_{p_2,q_2}s_{p_1,q_1}=s_{p_1+q_1-q_2,p_1+q_1-p_2}\ \text{if}\ p_1\le p_2<q_2\le q_1.
\end{array}
\end{equation}
We refer the reader to \cite{HK} and \cite{DJS} for more details.

The fundamental group $PJ_n:=\pi_1(M_{0,n+1}(\BR))=\pi_1(\widetilde{M_{0,n+1}}(\BR))$ is the kernel of the natural homomorphism $J_n\to S_n$ which maps $s_{p,q}$ to $\ol{s_{p,q}}$. By analogy with braid groups, $PJ_n$ is called the \emph{pure cactus group}. In fact $\widetilde{M_{0,n+1}}(\BR)$ is a $K(\pi,1)$ space for this group, see \cite{DJS}.

We will also use another set of generators of $J_n$, namely, for $k\le l<m$ let
\begin{equation}
s_{[k,l,m]}:=s_{k,m}s_{k,l}s_{l+1,m}.
\end{equation}
Under the natural homomorphism $J_n\to S_n$, the generators $s_{[k,l,m]}$ go to the permutation transposing the segments $[k,l]$ and $[l+1,m]$.

\section{Gaudin subalgebras}\label{sect-bethe-alg}

\subsection{Notation.} For a semisimple $\fg$, we denote by $\fh, X, X^\vee, \Delta, \Delta_+, \Pi_+$ its Cartan subalgebra, weight lattice, coweight lattice, root system, set of positive roots and set of simple roots, respectively. We fix an invariant inner product $(\cdot,\cdot)$ on $X$ such that $(\alpha,\alpha)=2$ for short roots $\alpha\in\Delta$. This determines an invariant inner product on $\fg$ which we also denote by $(\cdot,\cdot)$. We set the Casimir element $C=\sum\limits_{a=1}^{\dim\fg} x_a^2\in U(\fg)$ where $\{x_a\}$ is an orthogonal basis of $\fg$. We denote by $c(\l)$ the eigenvalue of the Casimir operator of $\fg$ on $V_\l$, the irreducible representation with the highest weight $\l$. In particular, for $\fg=\fsl_2$ we have $c(\l)=\frac{\l(\l+2)}{2}$.

For any subset $M\subset\{1,2,\ldots,n\}$ denote by $\Delta_M$ the diagonal embedding of $U(\fg)$ into the tensor product of the $i$-th copies of $U(\fg)$ for all $i\in M$, so for $x\in\fg$ we have $\Delta_M(x)=\sum\limits_{i\in M}x^{(i)}$. For $M\subset \{1,2,\ldots,n\}$ we denote by $C_M$ the image of the Casimir element $C$ under the homomorphism $\Delta_M: U(\fg)\to U(\fg)^{\otimes n}$.

\subsection{} Let $\ul{z}=(z_1,\ldots,z_n)$ be a collection of pairwise distinct complex numbers. The quadratic Gaudin Hamiltonians are the following commuting elements of the algebra $U(\fg)^{\otimes n}$:
\begin{equation*}
H_i=\sum\limits_{j\neq i}\sum\limits_{a=1}^{\dim\fg}
\frac{x_a^{(i)}x_a^{(j)}}{z_i-z_j}=\sum\limits_{j\neq i}
\frac{C_{ij}-C_i-C_j}{2(z_i-z_j)}.
\end{equation*}
Clearly, $H_i$ commute with the diagonal $\fg$ in $U(\fg)^{\otimes n}$. Let us describe the maximal commutative subalgebra $\A(\ul{z})\subset [U(\fg)^{\otimes n}]^\fg$ containing $H_i$.

\subsection{Example.} Let $\fg=sl_2$ and $e,f,h$ be its standard basis. Then $C=ef+fe+\frac{1}{2}h^2$. The algebra
$\A(\ul{z})$ is generated by $H_i=\sum\limits_{k\neq i}
\frac{e^{(i)}f^{(k)}+f^{(i)}e^{(k)}+\frac{1}{2}h^{(i)}h^{(k)}}{z_i-z_k}$ and $C_i=e^{(i)}f^{(i)}+f^{(i)}e^{(i)}+\frac{1}{2}h^{(i)}h^{(i)}$ for $i=1,\ldots,n$ (the latter are the generators of the center of $U(\fg)^{\otimes n}$). The only algebraic relation on the generators $H_i, C_i$ is $\sum\limits_{i=1}^nH_i=0$. Hence the Gaudin algebra is a polynomial algebra with $2n-1$ generators.

\subsection{Construction of the subalgebra $\A(\ul{z})$.} We fix an invariant scalar product on $\fg$ and identify $\fg^*$ with $\fg$ via this scalar product. Consider the infinite-dimensional ind-nilpotent Lie
algebra $\fg_-:=\fg\otimes t^{-1}\BC[t^{-1}]$ -- it is a "half" of
the corresponding affine Kac--Moody algebra $\hat\fg$. The
universal enveloping algebra $U(\fg_-)$ has a natural (PBW) filtration
by the degree with respect to the generators. The associated
graded algebra is the symmetric algebra $S(\fg_-)$ by the
Poincar\'e--Birkhoff--Witt theorem.

There is a natural grading on the associative algebras $S(\fg_-)$ and $U(\fg_-)$ determined by the derivation $L_0$ defined by
\begin{equation}\label{der2}
L_0(g\otimes t^{m})=mg\otimes t^{m}\quad\forall g\in\fg,
m=-1,-2,\dots
\end{equation}

There is also a derivation $L_{-1}$ of degree $-1$ with respect to this grading:
\begin{equation}\label{der3}
L_{-1}(g\otimes t^{m})=mg\otimes t^{m-1}\quad\forall g\in\fg,
m=-1,-2,\dots
\end{equation}

Let $i_{-1}:S(\fg)\hookrightarrow S(\fg_-)$ be the embedding,
which maps $g\in\fg$ to $g\otimes t^{-1}$. The algebra of invariants,  $S(\fg)^{\fg}$, is known to be a free commutative algebra with $\rk\fg$ generators. Let $\Phi_l,\ l=1,\dots,\rk\fg$
be some set of free generators of the algebra $S(\fg)^{\fg}$. The following result is due to Boris Feigin and Edward Frenkel, see \cite{Fr2} and references therein.

\begin{thm}\label{thm-feigin-frenkel}
There exist commuting elements $S_l\in U(\fg_-)$, homogeneous with respect to $L_0$, such that $\gr S_l=i_{-1}(\Phi_l)$. Moreover, the elements $L_{-1}^kS_l$ pairwise commute for all $k\in\BZ_+$ and $l=1,\dots,\rk\fg$.
\end{thm}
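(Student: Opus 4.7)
The plan is to identify $U(\fg_-)$ with the vacuum module of the affine Kac--Moody algebra $\hg$ at the critical level and to obtain the $S_l$ as generators of its center. Let $\hg = \fg\otimes\BC((t)) \oplus \BC K$ with the central extension determined by the fixed invariant form on $\fg$, and let $V_\kappa := U(\hg)\otimes_{U(\fg[[t]]\oplus\BC K)}\BC_\kappa$ be its vacuum Verma module at level $\kappa$, where $\BC_\kappa$ is the trivial $\fg[[t]]$-module on which $K$ acts by $\kappa$. As a left $U(\fg_-)$-module, $V_\kappa$ is freely generated by the vacuum vector $v_\kappa$, giving an isomorphism $V_\kappa \cong U(\fg_-)$; under this identification, the operators induced by the derivations \eqref{der2} and \eqref{der3} coincide, respectively, with the standard Virasoro operators $L_0$ and $L_{-1}$ acting on $V_\kappa$.

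The crucial second step is to invoke the Feigin--Frenkel theorem on the center at the critical level $\kappa = -\ch$ (minus the dual Coxeter number): the space $\fz(\hg) := V_{-\ch}^{\fg[[t]]}$ of $\fg[[t]]$-invariants is a commutative subalgebra of $U(\fg_-)$, and it is a polynomial algebra that one may take to be freely generated by elements $L_{-1}^k S_l$, $k \geq 0$, $l = 1,\ldots,\rk\fg$, with each $S_l$ $L_0$-homogeneous of degree $d_l := \deg\Phi_l$. The standard argument passes through the Wakimoto (free-field) realization of $V_{-\ch}$ and identifies $\fz(\hg)$ with the intersection of kernels of certain screening operators, matching it with the classical $\mathcal{W}$-algebra of the Langlands dual Lie algebra $\check\fg$; the count and degrees of generators are read off from that $\mathcal{W}$-algebra (see \cite{Fr2}).

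To verify $\gr S_l = i_{-1}(\Phi_l)$, I would pass to the associated graded with respect to the PBW filtration on $U(\fg_-)$. Its associated graded is $S(\fg_-)$, and $\gr\fz(\hg) \subseteq S(\fg_-)^{\fg[[t]]}$, which by a classical Poisson-invariant-theory computation (essentially Beilinson--Drinfeld's description of the classical Hitchin base on the formal disc) is a polynomial algebra freely generated by the elements $L_{-1}^k\, i_{-1}(\Phi_l)$, $k \geq 0$, $l = 1,\ldots,\rk\fg$. A Hilbert series comparison then forces $\gr\fz(\hg) = S(\fg_-)^{\fg[[t]]}$, and after rescaling one may take the lowest-degree generator in each tower to satisfy $\gr S_l = i_{-1}(\Phi_l)$. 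Pairwise commutativity of all $L_{-1}^k S_l$ is then automatic: the relation $[L_{-1}, x\otimes t^m] = m(x\otimes t^{m-1})$ implies that $L_{-1}$ preserves the subspace of $\fg[[t]]$-invariants, so every $L_{-1}^k S_l$ lies in $\fz(\hg)$, which is commutative by construction.

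The main obstacle is unmistakably the Feigin--Frenkel step: producing non-scalar $\fg[[t]]$-invariants in $V_{-\ch}$ beyond the obvious quadratic Segal--Sugawara tower, and establishing that they generate exactly $\rk\fg$ independent $L_{-1}$-towers, requires the full vertex-algebra/opers machinery and has no elementary shortcut outside the case $\fg=\fsl_2$ (where $S_1$ is the Segal--Sugawara current itself). Granted that input, Steps one, three and four are essentially formal.
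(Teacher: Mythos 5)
The paper itself gives no proof of this statement: it is quoted as the Feigin--Frenkel theorem with a pointer to \cite{Fr2}, i.e.\ precisely the critical-level-center argument (vacuum module $V_{-\ch}\cong U(\fg_-)$, Wakimoto realization and screening operators, comparison of the associated graded with $S(\fg_-)^{\fg[[t]]}$) that you sketch. Your bridging steps --- the identification of the paper's $L_0,L_{-1}$ with the natural derivation actions on the vacuum module, the $L_{-1}$-stability of the $\fg[[t]]$-invariants giving commutativity of all $L_{-1}^kS_l$, and the symbol/Hilbert-series comparison yielding $\gr S_l=i_{-1}(\Phi_l)$ --- are the standard ones and are correct, so your proposal follows essentially the same (cited) route as the paper.
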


Let $U(\fg)^{\otimes n}$ be the tensor product of $n$ copies of
$U(\fg)$. We denote the subspace $1\otimes\dots\otimes
1\otimes\fg\otimes 1\otimes\dots\otimes 1\subset U(\fg)^{\otimes
n}$, where $\fg$ stands at the $i$th place, by $\fg^{(i)}$.
Respectively, for any $x\in U(\fg)$ we set
\begin{equation}
x^{(i)}=1\otimes\dots\otimes 1\otimes x\otimes
1\otimes\dots\otimes 1\in U(\fg)^{\otimes n}.
\end{equation}

Let $\Delta_{[1,n]}:U(\fg_-)\hookrightarrow U(\fg_-)^{\otimes n}$ be the
diagonal embedding (i.e. for $x\in\fg_-$, we have $\Delta_{[1,n]}(x)=\sum\limits_{i=1}^nx^{(i)}$). To any nonzero $w\in\BC$, we assign the homomorphism $\phi_w: U(\fg_-)\to U(\fg)$ of evaluation at the point $w$ (i.e., for $g\in\fg$, we have $\phi_w(g\otimes
t^m)=w^mg$). For any collection of pairwise distinct nonzero
complex numbers $z_i, i=1,\dots,n$, we have the following
homomorphism:
\begin{equation}
\phi_{w_1,\dots,w_n}=(\phi_{w_1}\otimes\dots\otimes\phi_{w_n})\circ
\Delta_{[1,n]}:U(\fg_-)\to U(\fg)^{\otimes n}.
\end{equation}
More explicitly, we have
$$\phi_{w_1,\dots,w_n}(g\otimes
t^m)=\sum\limits_{i=1}^nw_i^mg^{(i)}.$$

Consider the following $U(\fg)^{\otimes n}$-valued functions in
the variable $w$
$$
S_l(w;z_1,\dots,z_n):=\phi_{w-z_1,\dots,w-z_n}(S_l).
$$

We define the  Gaudin subalgebra $\A(\ul{z})\subset U(\fg)^{\otimes n}$ as a subalgebra generated by $S_l(w;z_1,\dots,z_n)$ for all $w\in\BC\backslash\{z_1,\ldots,z_n\}$. Due to Theorem~\ref{thm-feigin-frenkel}, this subalgebra is commutative. The subalgebra $\A(\ul{z})\subset U(\fg)^{\otimes n}$ is also known as \emph{Bethe algebra}.

Let $S_l^{i,m}(z_1,\dots,z_n)$ be the coefficients of the principal
part of the Laurent series of $S_l(w;z_1,\dots,z_n)$ at the point
$z_i$, i.e.,
$$S_l(w;z_1,\dots,z_n)=\sum\limits_{m=1}^{m=\deg\Phi_l}S_l^{i,m}(z_1,\dots,z_n)
(w-z_i)^{-m}+O(1)\
\text{as}\ w\to z_i.$$

Taking the generator $S_l$ corresponding to the quadratic Casimir element on $S(\fg)$, one gets the quadratic Gaudin Hamiltonians (\ref{quadratic}) as the residues of $S_l(w;z_1,\dots,z_n)$ at the points $z_1,\ldots,z_n$. The following result is well-known (see e.g. \cite{CFR} for the proof).

\begin{prop}\label{generators2}\cite{CFR}
\begin{enumerate}
 \item The elements
$S_l^{i,m}(z_1,\dots,z_n)\in U(\fg)^{\otimes n}$ are homogeneous under
simultaneous affine transformations of the parameters $z_i\mapsto
az_i+b$ (i.e. $S_l^{i,m}(az_1+b,\dots,az_n+b)$ is proportional to $S_l^{i,m}(z_1,\dots,z_n)$). \item The subalgebra $\A(\ul{z})$ is a free commutative
algebra generated by the elements $S_l^{i,m}(z_1,\dots,z_n)\in
U(\fg)^{\otimes n}$, where $i=1,\dots,n-1$, $l=1,\dots,\rk\fg$,
$m=1,\dots,\deg\Phi_l$, and by
$S_l^{n,\deg\Phi_l}(z_1,\dots,z_n)\in U(\fg)^{\otimes
n}$, where $l=1,\dots,\rk\fg$. \item All the elements of $\A(\ul{z})$ are
invariant with respect to the diagonal action of $\fg$. \item The
center of the diagonal $\Delta_{[1,n]}(U(\fg))\subset U(\fg)^{\otimes n}$
is contained in $\A(\ul{z})$.
\end{enumerate}
\end{prop}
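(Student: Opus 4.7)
The plan is to deduce everything from Theorem~\ref{thm-feigin-frenkel} together with the PBW filtration; this is essentially the argument of [CFR]. Parts (1) and (3) are direct. For (1), under the simultaneous substitution $w\to aw+b$, $z_i\to az_i+b$ one has $w-z_i\to a(w-z_i)$, and since $S_l\in U(\fg_-)$ is $L_0$-homogeneous of degree $-\deg\Phi_l$ (its symbol $i_{-1}(\Phi_l)$ is a product of $\deg\Phi_l$ generators each of $L_0$-weight $-1$), one finds $S_l(aw+b;az_1+b,\ldots,az_n+b)=a^{-\deg\Phi_l}S_l(w;z_1,\ldots,z_n)$; expanding in Laurent series at $z_i$ (resp.\ $az_i+b$) and matching coefficients of $(w-z_i)^{-m}$ yields the proportionality $S_l^{i,m}(az_1+b,\ldots)=a^{m-\deg\Phi_l}S_l^{i,m}(z_1,\ldots,z_n)$. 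For (3), since $\Phi_l\in S(\fg)^{\fg}$, Feigin--Frenkel produces $\fg$-invariant lifts $S_l\in U(\fg_-)^{\fg}$ (with $\fg$ acting by the restriction of the adjoint $\hat\fg$-action); because $\Delta_{[1,n]}$ and each evaluation $\phi_{w_i}$ are $\fg$-equivariant, $\phi_{w_1,\ldots,w_n}\circ\Delta_{[1,n]}$ carries $\fg$-invariants to diagonal-$\fg$-invariants in $U(\fg)^{\otimes n}$.

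For (2) I would pass to $\gr U(\fg)^{\otimes n}=S(\fg)^{\otimes n}$ and study the principal symbols. The symbol of $S_l(w;\ul{z})$ is $\Phi_l(L(w))$, where $L(w)=\sum_{i=1}^n\sum_{a}\frac{x_a\otimes x_a^{(i)}}{w-z_i}\in\fg\otimes S(\fg)^{\otimes n}\otimes\BC(w)$ is the classical Lax element: a $\fg$-valued rational function with simple poles at the $z_i$ and a simple zero at $\infty$. Its principal part at $z_i$ then has order exactly $\deg\Phi_l$, with top coefficient $\Phi_l(x^{(i)})$ supported in the $i$-th tensor factor. The only source of linear relations among the residues is the vanishing $\Phi_l(L(w))=O(w^{-\deg\Phi_l})$ at infinity, which produces precisely the identities expressing the omitted coefficients $S_l^{n,m}$, $m<\deg\Phi_l$, as polynomials in the retained generators. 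The resulting count matches the Krull dimension of $\A(\ul{z})$ computed in [FFR], and algebraic independence of the retained symbols follows by specializing $\ul{z}$ and evaluating on a generic regular semisimple point of $\fh^{\oplus n}\subset\fg^{\oplus n}$.

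For (4), expanding $S_l(w;\ul{z})$ as $w\to\infty$ gives $S_l(w;\ul{z})=w^{-\deg\Phi_l}\Delta_{[1,n]}(Z_l)+O(w^{-\deg\Phi_l-1})$, where $Z_l:=\phi_1(S_l)$ lies in $U(\fg)^{\fg}=Z(U(\fg))$ by part~(3). Equivalently this leading coefficient equals $\sum_{i=1}^n S_l^{i,\deg\Phi_l}(\ul{z})$, so $\Delta_{[1,n]}(Z_l)\in\A(\ul{z})$; since the $Z_l$ generate $Z(U(\fg))$ (their symbols being the $\Phi_l$), the image of the diagonal center is contained in $\A(\ul{z})$.

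The hard part will be the algebraic independence in (2): parts (1), (3), and (4) are essentially formal consequences of the construction, but showing that the listed Laurent coefficients form a \emph{free} polynomial algebra requires a genuine rank computation. The cleanest route I know is that of [CFR]: view $\Phi_l(L(w))$ as a family of rational functions in $w$ parametrized by $\ul{z}$ and by the factors $x_a^{(i)}$; the principal parts at the $z_i$ form a generic system precisely because $L(w)$ is, in an appropriate universal sense, the most general $\fg$-valued meromorphic $1$-form on $\BC\BP^1$ with simple poles at $\ul{z}$ and vanishing at $\infty$.
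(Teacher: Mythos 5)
The paper itself gives no proof of this proposition: it is quoted verbatim from \cite{CFR} ("the following result is well-known, see e.g. \cite{CFR} for the proof"), so there is no internal argument to compare yours with; your proposal has to stand on its own. Parts (1), (3) and (4) of your argument are essentially sound. Two caveats: for (3) you need the $\fg$-invariance of the Feigin--Frenkel elements $S_l$, which is true but is \emph{not} part of Theorem~\ref{thm-feigin-frenkel} as stated in this paper, so it must be imported explicitly from the Feigin--Frenkel construction; and in (4) the coefficient of $w^{-\deg\Phi_l}$ in the expansion at infinity is not $\sum_i S_l^{i,\deg\Phi_l}(\ul{z})$ but a triangular combination of all the $S_l^{i,m}(\ul{z})$ weighted by powers of the $z_i$ (expand $(w-z_i)^{-m}$ at $\infty$). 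This slip is harmless, since every coefficient of the expansion of $S_l(w;\ul{z})$ lies in $\A(\ul{z})$ anyway, and the leading one is indeed $\Delta_{[1,n]}(\phi_1(S_l))$ with $\phi_1(S_l)$ central of symbol $\Phi_l$. Your derivation of the \emph{generation} statement in (2) from the vanishing at infinity is also fine: the linear system expressing $S_l^{n,m}$, $m<\deg\Phi_l$, through the retained coefficients is triangular with unit diagonal, hence solvable.

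The genuine gap is the algebraic independence in (2), which you yourself identify as the hard part but do not actually prove. The assertion that independence "follows by specializing $\ul{z}$ and evaluating on a generic regular semisimple point of $\fh^{\oplus n}$" does not work as stated: evaluating polynomials at a single point can never certify algebraic independence. What is needed is a rank computation, e.g.\ showing that the differentials of the symbols (the coefficients of the principal parts of $\Phi_l$ applied to your Lax element $L(w)$) are linearly independent at a suitably generic point of $\fg^{\oplus n}$, or alternatively a transcendence-degree/Poincar\'e-series comparison for the associated graded Poisson-commutative subalgebra against a known lower bound --- this is precisely the nontrivial content of \cite{CFR} (and of the closely related computations of Mukhin--Tarasov--Varchenko) that your final paragraph gestures at with "the most general $\fg$-valued meromorphic $1$-form" but does not carry out. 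Likewise the claim that "the only source of relations among the residues is the vanishing at infinity" is exactly the statement to be proved, not a step in its proof. So the proposal is a correct reduction of (1), (3), (4) and of the generation part of (2) to the Feigin--Frenkel theorem, but the freeness claim, which is the heart of the proposition, is still missing.
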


\begin{rem}\label{rem-diagcenter} It is easy to see that one can replace $S_l^{n,\deg\Phi_l}(z_1,\dots,z_n)$ in (2) by the generators of the center of $\Delta_{[1,n]}(U(\fg))$.
\end{rem}

\subsection{Operad structure on commutative subalgebras.}
For any partition of the set $\{1,2,\ldots,n\}=M_1\cup\ldots\cup M_k$, define the homomorphism $$D_{M_1,\ldots,M_k}:U(\fg)^{\otimes k}\hookrightarrow
U(\fg)^{\otimes n},$$ taking $x^{(i)}\in U(\fg)^{\otimes k}$, for $x\in\fg,\ i=1,\ldots,k$, to $\sum\limits_{j\in M_i}x^{(j)}$.

For any subset $M=\{j_1,\ldots,j_m\}\subset\{1,2,\ldots,n\}$, with $j_1<\ldots<j_m$, let $I_M:U(\fg)^{\otimes m}\hookrightarrow
U(\fg)^{\otimes n}$ be the embedding of the tensor product of the copies of $U(\fg)$ indexed by $M$, i.e. $I_M(x^{(i)}):=x^{(j_i)}\in U(\fg)^{\otimes n}$ for any $x\in\fg,\ i=1,\ldots,m$. Clearly, all these homomorphisms are $\fg$-equivariant and every element in the image of $D_{M_1,\ldots,M_k}$ commutes with every element of $I_{M_i}([U(\fg)^{\otimes m_i}]^\fg)$ for $i=1,\ldots,k$. This gives us the following ``substitution'' homomorphism defining an operad structure on the spaces $[U(\fg)^{\otimes n}]^\fg$
\begin{equation}\label{operad-Ug}
\gamma_{k;M_1,\ldots,M_k}=D_{M_1,\ldots,M_k}\otimes\bigotimes\limits_{i=1}^k I_{M_i}:[U(\fg)^{\otimes k}]^\fg\otimes\bigotimes\limits_{i=1}^k[U(\fg)^{\otimes m_i}]^\fg\to[U(\fg)^{\otimes n}]^\fg.
\end{equation}

Let ${\rm Subalg}_{n}$ be the set of commutative subalgebras in $U(\fg)^{\otimes n}$ of the transcendence degree $\frac{n-1}{2}\dim\fg+\frac{n+1}{2}\rk\fg$ commuting with the diagonal $\fg$ and containing the center of $U(\fg)^{\otimes n}$ and of the diagonal $U(\fg)$.

\begin{prop}\label{prop-substitution} The homomorphism~(\ref{operad-Ug}) defines a substitution map $$\gamma_{k;M_1,\ldots,M_k}:{\rm Subalg}_k\times\prod\limits_{i=1}^k{\rm Subalg}_{m_i}\to {\rm Subalg}_{n}.$$ Moreover, $\gamma_{k;M_1,\ldots,M_k}(\A(\ul{w});\A(\ul{u_k}),\ldots,\A(\ul{u_k}))$ has the same Poincar\'e series as $\A(\ul{z})$.
\end{prop}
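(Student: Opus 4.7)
The plan is to verify that the image of $\gamma_{k;M_1,\ldots,M_k}$ lies in ${\rm Subalg}_n$ by checking in turn (i) commutativity, (ii) commutation with the diagonal $\fg\subset U(\fg)^{\otimes n}$, (iii) containment of the center of $U(\fg)^{\otimes n}$ and of the center of the diagonal $U(\fg)$, and (iv) the correct transcendence degree. The Poincar\'e series assertion will then follow from a refined generator count based on \propref{generators2}.

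For (i) and (ii) the key observation is that under the identification $U(\fg)^{\otimes n}=\bigotimes_{i=1}^k U(\fg)^{\otimes m_i}$ (which permutes factors according to the partition), the map $D_{M_1,\ldots,M_k}$ factors as the tensor product $\bigotimes_i\Delta_{[1,m_i]}$ of diagonal embeddings. Hence any element of $D_{M_1,\ldots,M_k}(A_0)$ is a sum of products $\prod_i\Delta_{M_i}(x_i)$ with $x_i\in U(\fg)$. Since each $A_i\in{\rm Subalg}_{m_i}$ commutes with the diagonal $\fg$, and hence with the full subalgebra $\Delta_{[1,m_i]}(U(\fg))$ generated by it, the image $I_{M_i}(A_i)$ commutes with $\Delta_{M_i}(U(\fg))$; commutation with $\Delta_{M_j}(U(\fg))$ for $j\ne i$ is automatic by disjointness of supports. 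This gives (i). For (ii), the identity $\Delta_{[1,n]}(x)=D_{M_1,\ldots,M_k}(\Delta_{[1,k]}(x))$ for $x\in\fg$ shows $D_{M_1,\ldots,M_k}(A_0)$ is $\fg$-invariant, while the bracket of $I_{M_i}(A_i)$ with $\Delta_{[1,n]}(x)$ reduces to the bracket with $\Delta_{M_i}(x)$, which vanishes by $\fg$-invariance of $A_i$.

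Item (iii) is immediate: for $j\in M_i$ the center of the $j$-th factor of $U(\fg)^{\otimes n}$ is $I_{M_i}$ applied to the corresponding central factor of $U(\fg)^{\otimes m_i}\subset A_i$; and the center of the diagonal $U(\fg)\subset U(\fg)^{\otimes n}$ is $D_{M_1,\ldots,M_k}$ applied to the center of the diagonal $U(\fg)\subset U(\fg)^{\otimes k}\subset A_0$. For (iv), summing transcendence degrees gives
\[
\dim A_0+\sum_{i=1}^k\dim A_i=\frac{n-1}{2}\dim\fg+\frac{n+1+2k}{2}\rk\fg,
\]
exceeding the required $\frac{n-1}{2}\dim\fg+\frac{n+1}{2}\rk\fg$ by $k\cdot\rk\fg$. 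This excess is absorbed exactly by the overlaps: for each $i=1,\ldots,k$ the $\rk\fg$-dimensional center of $\Delta_{M_i}(U(\fg))$ arises both as $D_{M_1,\ldots,M_k}$ of the center of the $i$-th tensor factor of $U(\fg)^{\otimes k}$ (lying in $A_0$) and as $I_{M_i}$ of the center of the diagonal $U(\fg)\subset U(\fg)^{\otimes m_i}$ (lying in $A_i$).

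For the Poincar\'e series claim, I would use \propref{generators2} together with \remref{rem-diagcenter} to present each of $\A(\ul{w}),\A(\ul{u_1}),\ldots,\A(\ul{u_k})$ as a polynomial algebra with, for each $l=1,\ldots,\rk\fg$, exactly $(\text{number of points}-1)\deg\Phi_l+1$ PBW-homogeneous generators of degree $\deg\Phi_l$. The combined count is $((k-1)\deg\Phi_l+1)+\sum_i((m_i-1)\deg\Phi_l+1)=(n-1)\deg\Phi_l+k+1$, and subtracting the $k$ overlaps per $l$ identified in (iv) leaves $(n-1)\deg\Phi_l+1$, matching the generator count of $\A(\ul{z})$. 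The natural surjection from the polynomial algebra on these remaining generators onto the image must then be an isomorphism at the associated-graded level, since both sides are graded domains of the same Krull dimension, yielding the equality of Poincar\'e series. The main obstacle I anticipate is precisely this last step --- ruling out further unforeseen relations among the combined generators --- but this is forced by the transcendence-degree match established in (iv).
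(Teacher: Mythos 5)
Your steps (i)--(iii) are fine and correspond to what the paper dismisses as ``easy to check.'' The gap is in (iv) and in the final Poincar\'e-series step, and it is not a minor one: your argument is circular at exactly the point you flag as the main obstacle. To know that the transcendence degree of the algebra generated by $D_{M_1,\ldots,M_k}(\A(\ul{w}))$ and the $I_{M_i}(\A(\ul{u_i}))$ equals the sum of the individual transcendence degrees minus $k\cdot\rk\fg$, you must show that the \emph{only} relations among the combined generators are the identifications of the $k$ copies of the center of the diagonal $U(\fg)$ --- i.e.\ that the product of the two subalgebras inside $U(\fg)^{\otimes n}$ is their tensor product over the common central subalgebra $D_{M_1,\ldots,M_k}(ZU(\fg)^{\otimes k})$. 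Your (iv) simply asserts that the excess ``is absorbed exactly by the overlaps'' (this only gives the upper bound on the transcendence degree, not the lower bound), and your last step then invokes ``the transcendence-degree match established in (iv)'' to rule out further relations. Nothing in the proposal actually excludes additional algebraic relations between, say, $D_{M_1,\ldots,M_k}(\A(\ul{w}))$ and $\bigotimes_i I_{M_i}(\A(\ul{u_i}))$ beyond the central overlap, and without that the Krull-dimension/Poincar\'e-series conclusion does not follow.

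The paper closes precisely this gap with \lemref{lem-factorization}: the map $\gamma_{k;M_1,\ldots,M_k}$ factors through $[U(\fg)^{\otimes k}]^\fg\otimes_{ZU(\fg)^{\otimes k}}\bigotimes_i[U(\fg)^{\otimes m_i}]^\fg$, which embeds into $[U(\fg)^{\otimes n}]^\fg$, and all the algebras involved are free $ZU(\fg)^{\otimes k}$-modules. This is not formal: it rests on Kostant's theorem ($U(\fg)$ is free over $ZU(\fg)$ with an invariant space of generators) and, crucially, on Knop's Harish-Chandra theorem for reductive group actions, applied to the $G^{\times(m+1)}$-action on $G^{\times m}$, which gives $\Delta_{[1,m]}(U(\fg))\cdot[U(\fg)^{\otimes m}]^\fg=U(\fg)\otimes_{ZU(\fg)}[U(\fg)^{\otimes m}]^\fg$. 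Once this tensor-product (freeness) statement is available, both the membership in ${\rm Subalg}_n$ and the count of $(n-1)\deg\Phi_l+1$ polynomial generators of degree $\deg\Phi_l$ for each $l$ (hence the Poincar\'e series, via \propref{generators2}) follow as you intend; but some such structural input is indispensable, and your proposal supplies no substitute for it.
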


\begin{proof} It is easy to check that the resulting subalgebra commutes with the diagonal $\fg$ and contains the center of $U(\fg)^{\otimes n}$ and of the diagonal $U(\fg)$. To check that it has right transcendence degree and Poincar\'e series, we need the following

\begin{lem}\label{lem-factorization} \begin{enumerate} \item The homomorphism $\gamma_{k;M_1,\ldots,M_k}$ factors as
$$
\gamma_{k;M_1,\ldots,M_k}:[U(\fg)^{\otimes k}]^\fg\otimes\bigotimes\limits_{i=1}^k[U(\fg)^{\otimes m_i}]^\fg\to [U(\fg)^{\otimes k}]^\fg\otimes_{ZU(\fg)^{\otimes k}}\bigotimes\limits_{i=1}^k[U(\fg)^{\otimes m_i}]^\fg\hookrightarrow[U(\fg)^{\otimes n}]^\fg.
$$
\item The algebras $[U(\fg)^{\otimes k}]^\fg$, $\bigotimes\limits_{i=1}^k[U(\fg)^{\otimes m_i}]^\fg$ and $[U(\fg)^{\otimes n}]^\fg$ are free as $ZU(\fg)^{\otimes k}$-modules.
\end{enumerate}
\end{lem}

\begin{proof} By Kostant's theorem \cite{Kos}, $U(\fg)$ is a free $ZU(\fg)$-module, and one can choose a $\fg$-invariant space of generators for it. Hence $[U(\fg)^{\otimes k}]^\fg$ and $[U(\fg)^{\otimes n}]^\fg$ are both free as $ZU(\fg)^{\otimes k}$-modules.

For the rest, it suffices to show that $[U(\fg)^{\otimes m}]^\fg$ is a free $\Delta_{[1,m]}(ZU(\fg))$-module and that the homomorphism $\Delta_{[1,m]}\cdot\id:U(\fg)\otimes [U(\fg)^{\otimes m}]^\fg\to U(\fg)^{\otimes m}$ factors as $U(\fg)\otimes [U(\fg)^{\otimes m}]^\fg\to U(\fg)\otimes_{ZU(\fg)} [U(\fg)^{\otimes m}]^\fg \hookrightarrow U(\fg)^{\otimes m}$. But this is a particular case of Knop's theorem on Harish-Chandra map for reductive group actions (see \cite{Kn}, items (d) and (e) of the Main Theorem). Indeed, the Main Theorem of \cite{Kn} states that for any reductive $G$ and any smooth affine $G$-variety $X$ the algebra $D(X)^G$ of $G$-invariant differential operators and its commutant $U(X)$ in the algebra $D(X)$ are both free modules over the center of $D(X)^G$. Moreover, the product $D(X)^G\cdot U(X)\subset D(X)$ is the tensor product $D(X)^G\otimes_{ZD(X)^G} U(X)$ of $D(X)^G$ and $U(X)$ over the center of $D(X)^G$. To get the desired statement we just apply this to the $G^{\times (m+1)}$-action on $X=G^{\times m}$, where $e\times G^{\times m}$ acts on $G^{\times m}$ from the right and $G\times e^{\times m}$ acts on $G^{\times m}$ diagonally from the left. Indeed, for this case we have $D(X)^{G^{\times (m+1)}}$ is $[U(\fg)^{\otimes m}]^\fg$. By Theorem 10.1 of \cite{Kn}, the center of $[U(\fg)^{\otimes m}]^\fg$ is $ZU(\fg)^{\otimes (m+1)}=\Delta_{[1,m]}(ZU(\fg))\otimes ZU(\fg)^{\otimes m}$ and the algebra $U(X)$ contains $\Delta_{[1,m]}(U(\fg))\otimes ZU(\fg)^{\otimes m}$ as the subalgebra of $e\times G^{\times m}$-invariants. Hence we have \begin{multline*}\Delta_{[1,m]}(U(\fg))\cdot [U(\fg)^{\otimes m}]^\fg=(\Delta_{[1,m]}(U(\fg))\otimes ZU(\fg)^{\otimes m})\otimes_{ZU(\fg)^{\otimes (m+1)}}[U(\fg)^{\otimes m}]^\fg=\\=U(\fg)\otimes_{ZU(\fg)} [U(\fg)^{\otimes m}]^\fg.\end{multline*}

\end{proof}

The subalgebras $D_{M_1,\ldots,M_k}(\A(\ul{w}))$ and $\bigotimes\limits_{i=1}^{k}I_{M_i}(\A(\ul{u_1})\otimes\ldots\otimes\A(\ul{u_k}))$ both contain $D_{M_1,\ldots,M_k}(ZU(\fg)^{\otimes k})$. Moreover both of these subalgebras are free modules over $D_{M_1,\ldots,M_k}(ZU(\fg)^{\otimes k})$. By Lemma~\ref{lem-factorization}, $\gamma_{k;M_1,\ldots,M_k}(\A(\ul{w});\A(\ul{u_1}),\ldots,\A(\ul{u_k}))$ is the tensor product of $D_{M_1,\ldots,M_k}(\A(\ul{w}))$ and $\bigotimes\limits_{i=1}^{k}I_{M_i}(\A(\ul{u_1})\otimes\ldots\otimes\A(\ul{u_k}))$ over $D_{M_1,\ldots,M_k}(ZU(\fg)^{\otimes k})$. Hence both $\gamma_{k;M_1,\ldots,M_k}(\A(\ul{w});\A(\ul{u_1}),\ldots,\A(\ul{u_k}))$ and $\A(\ul{z})$ are polynomial algebras with $(n-1)\deg\Phi_l+1$ generators of degree $\deg\Phi_l$ for each central generator $\Phi_l$. This proves the statement on the Poincar\'e series.
\end{proof}

\subsection{Closure of the family $\A(\ul{z})$.}
From Proposition~\ref{generators2} it follows that the commutative subalgebras $\A(\ul{z})\subset U(\fg)^{\otimes n}$ form a flat family parameterized by the configuration space $M_{0,n+1}$. This means that for any positive integer $N$ the intersection of $\A(\ul{z})$ with the $N$-th filtered component $\rm{PBW}^{(N)}U(\fg)^{\otimes n}$ with respect to the PBW filtration of the universal enveloping algebra $U(\fg)^{\otimes n}$ has the same dimension $d(N)$. Hence there is a regular map from $M_{0,n+1}$ to the product of the Grassmannians $\prod\limits_{M\le N}{\rm Gr}(d(M),{\rm PBW}^{(M)}U(\fg)^{\otimes n})$ taking $\ul{z}\in M_{0,n+1}$ to $\prod\limits_{M\le N}\A(\ul{z})\cap {\rm PBW}^{(M)}U(\fg)^{\otimes n})$. Let $Z_N$ be the closure of the image of this map. Then there are surjective restriction maps $r_{NM}:Z_N\to Z_M$ for any $M<N$. The inverse limit $Z=\lim\limits_{\leftarrow} Z_N$ is well-defined as a pro-algebraic scheme. The restriction of the tautological vector bundle on the Grassmannian gives a sheaf $\A$ of commutative algebras on $Z$

\begin{prop} The fiber of $\A$ at any point of $Z$ is a commutative subalgebra in $U(\fg)^{\otimes n}$ which has the same Poincar\'e series as $\A(\ul{z})$ and coincides with $\A(\ul{z})$ for all $\ul{z}\in M_{0,n+1}\subset Z$.
\end{prop}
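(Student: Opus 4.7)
The plan is to argue that every defining property of $\A(\ul{z})$—the dimension count in each PBW degree, the inclusions between consecutive filtered pieces, commutativity, and closure under multiplication—either is built into the Grassmannian construction of $Z$ or is a Zariski-closed condition on the relevant product of Grassmannians, and therefore propagates from the dense subset $M_{0,n+1}$ to its closure $Z$.

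First I would unpack what a point $z \in Z$ actually prescribes. Writing $V_N := {\rm PBW}^{(N)}U(\fg)^{\otimes n}$, Proposition~\ref{generators2}(2) guarantees that $d(N) := \dim(\A(\ul{z}) \cap V_N)$ is independent of $\ul{z} \in M_{0,n+1}$, so the Grassmannian target used to define $Z_N$ is well-posed. A point of $Z_N$ supplies a tuple of subspaces $\A_M(z) \subset V_M$ with $\dim \A_M(z) = d(M)$ for all $M \le N$, and taking the inverse limit a point $z \in Z$ yields a full tower $\{\A_M(z)\}_{M \ge 0}$. I set $\A(z) := \bigcup_M \A_M(z)$; the Poincaré series claim is then automatic from the equality of dimensions $\dim \A_M(z) = \dim(\A(\ul{z}) \cap V_M)$ in every PBW degree. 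The required inclusions $\A_M(z) \subset \A_{M+1}(z)$ hold on $M_{0,n+1}$ by construction, and since the incidence locus inside ${\rm Gr}(d(M), V_M) \times {\rm Gr}(d(M+1), V_{M+1})$ is closed, they persist on the closure.

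Next, commutativity and the subalgebra property are both closed conditions. On ${\rm Gr}(d(M), V_M) \times {\rm Gr}(d(M'), V_{M'})$ there are tautological subbundles $\mathcal{T}_M, \mathcal{T}_{M'}$, and the commutator in $U(\fg)^{\otimes n}$ restricts to a morphism of vector bundles $\mathcal{T}_M \otimes \mathcal{T}_{M'} \to V_{M+M'}$ (viewed as a trivial bundle); its vanishing locus is Zariski closed. Similarly, on ${\rm Gr}(d(M), V_M) \times {\rm Gr}(d(M'), V_{M'}) \times {\rm Gr}(d(M+M'), V_{M+M'})$ the condition that the product $\mathcal{T}_M \cdot \mathcal{T}_{M'}$ lie inside $\mathcal{T}_{M+M'}$ is the vanishing of an induced morphism into the tautological quotient bundle, hence closed. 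Both conditions hold on the image of $M_{0,n+1}$, because each $\A(\ul{z})$ is a commutative subalgebra; they therefore hold on the closure $Z_{L}$ for $L = M+M'$, and consequently on $Z$.

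Coincidence with $\A(\ul{z})$ for $\ul{z} \in M_{0,n+1}$ is tautological from the definition of the map $M_{0,n+1} \to Z$. The main point requiring care—rather than a serious obstacle—is the bookkeeping: one must consistently phrase ``subspace of $U(\fg)^{\otimes n}$'' as a compatible tuple of finite-dimensional subspaces in the $V_N$, so that closedness arguments can be applied in each fixed PBW rank and then assembled in the inverse limit. Once that framework is in place, the assertions of the proposition are formal consequences of the specialness of closed conditions on Grassmannians together with Proposition~\ref{generators2}.
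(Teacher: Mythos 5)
Your proposal is correct and follows essentially the same route as the paper: the Poincar\'e series and the agreement with $\A(\ul{z})$ on $M_{0,n+1}$ are built into the Grassmannian/inverse-limit construction of $Z$, while commutativity and closure under the product are Zariski-closed conditions on the relevant Grassmannians and hence pass from the dense image of $M_{0,n+1}$ to all of $Z$. Your extra bookkeeping (the closed incidence condition $\A_M(z)\subset\A_{M+1}(z)$ and the bundle-map formulation of the closed conditions) only spells out details the paper leaves implicit.
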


\begin{proof} By definition, $\A$ is a sheaf of filtered vector spaces with the same Poincar\'e series as $\A(\ul{z})$ which coincides with $\A(\ul{z})$ for all $\ul{z}\in M_{0,n+1}\subset Z$. The conditions of being closed under the associative product on $U(\fg)^{\otimes n}$ and of being commutative are Zariski-closed on the Grassmannian ${\rm Gr}(d(N),{\rm PBW}^{(N)}U(\fg)^{\otimes n})$), hence each fiber of $\A$ is a commutative subalgebra in $U(\fg)^{\otimes n}$.
\end{proof}

So we have a flat family of commutative subalgebras $\A(\ul{z})\subset U(\fg)^{\otimes n}$ parametrized by $\ul{z}\in Z$, and in fact $Z$ is the indexing space for all possible limiting subalgebras of the family $\A(\ul{z})$. The construction of $Z$ is very general (it is well defined for any flat family of subspaces in a filtered space), but generally such scheme does not seem to have any good properties. Contrary, in our case it turns to be a smooth algebraic scheme:

\begin{thm}\label{thm-closure} \begin{enumerate} \item $Z$ is a smooth algebraic variety isomorphic to $\overline{M_{0,n+1}}$;
\item for any $\ul{z}\in\overline{M_{0,n+1}}$ the corresponding commutative subalgebra $\A(\ul{z})\subset U(\fg)^{\otimes n}$ is a polynomial algebra with $\frac{n-1}{2}\dim\fg+\frac{n+1}{2}\rk\fg$ generators;
\item the operad structures on $\overline{M_{0,n+1}}$ and on ${\rm Subalg}_n$ match, i.e. we have $$\A(\gamma_{k;M_1,\ldots,M_k}(\ul{w};\ul{u_1},\ldots,\ul{u_k}))=
    \gamma_{k;M_1,\ldots,M_k}(\A(\ul{w});\A(\ul{u_1}),\ldots,\A(\ul{u_k})).$$
\end{enumerate}
\end{thm}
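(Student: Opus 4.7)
The plan is to establish all three claims simultaneously by induction on $n$, working in the local coordinates $u_I$ on the charts $U_{T,\sigma}$ from Proposition~\ref{prop-strata-in-coordinates}. The key intermediate statement I would prove is a \emph{regular extension property}: the family $\A(\ul{z})$ extends to a flat family of commutative subalgebras over all of $\ol{M_{0,n+1}}$, and at a boundary point corresponding to a rooted tree $T$ the limiting subalgebra equals the iterated operadic substitution~(\ref{operad-Ug}) of generic Gaudin subalgebras, one per inner vertex of $T$. Granting this, part (3) holds by construction, part (2) follows from the Poincar\'e-series computation in Proposition~\ref{prop-substitution}, and part (1) follows because the resulting morphism $\ol{M_{0,n+1}}\to Z$ is surjective by density and injective because the limits at distinct trees are distinguished inductively through the smaller moduli spaces $\ol{M_{0,k(I)+1}}$ attached to each vertex.

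To construct the extension in a chart $U_{T,\sigma}$, I exploit the affine equivariance from Proposition~\ref{generators2}(1) together with the recursive definition $u_I=\frac{z_{r(I)}-z_{l(I)}}{z_{r(I')}-z_{l(I')}}$. For each marked point $z_i$ the difference $w-z_i$ (for $w$ a generic parameter) factors as a monomial in the $u_J$'s along the root-to-leaf path. Substituting into $S_l(w;\ul{z})=\phi_{w-z_1,\ldots,w-z_n}(S_l)$ and using $L_0$-homogeneity of $S_l$ from Theorem~\ref{thm-feigin-frenkel}, I would show that after multiplying $S_l(w;\ul{z})$ by a suitable monomial in the $u_J$'s it extends regularly across each boundary divisor $\{u_I=0\}$. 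A parallel analysis of the principal parts at the $z_i$'s yields regular extensions of the individual generators $S_l^{i,m}(\ul{z})$ from Proposition~\ref{generators2}(2).

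The content of the limit computation is then that setting $u_I=0$ bubbles off the cluster of leaves below $I$ into a new component, and the rescaled generators of $\A(\ul{z})$ split into two mutually commuting families: an \emph{outer} family built from the configuration with the cluster collapsed to a single point (which by induction generates the Gaudin subalgebra on the smaller tree), and an \emph{inner} family built from the zoomed-in configuration on the bubble (generating, via $I_M$, a Gaudin subalgebra on $|M|+1$ points). The two families commute because the inner one lies in $I_M([U(\fg)^{\otimes|M|}]^\fg)$ while the outer one acts on the index set $M$ only through the diagonal $\fg$, matching exactly the substitution homomorphism $\gamma_{k;M_1,\ldots,M_k}$ and the tensor-product factorization of Lemma~\ref{lem-factorization}.

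The hardest step is this explicit identification of the limit with the operadic substitution: verifying that after the correct rescaling the Laurent expansion of $S_l(w;\ul{z})$ jointly in all the $u_J$'s along the path from the root to the cluster splits cleanly into outer and inner contributions, with no cross-terms and no loss of generators. Once this is carried out, the remaining parts of the theorem become formal: operad compatibility is built into the construction, the Poincar\'e series is forced by Proposition~\ref{prop-substitution}, and injectivity of $\ol{M_{0,n+1}}\to Z$ reduces inductively to the case of each inner component of the stable curve.
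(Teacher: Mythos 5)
Your overall route is the same as the paper's: work in the charts $U_{T,\sigma}$, rescale the generating series according to the tree, and identify the boundary limit with the operadic substitution $\gamma_{k;M_1,\ldots,M_k}$, with the ``inner'' algebras on the bubbles commuting with the ``outer'' algebra on the collapsed curve. But the step you yourself flag as hardest --- the clean splitting of the limit with no cross-terms and no loss of generators --- is exactly where the paper's proof has content that your plan does not supply. The paper does not analyze Laurent expansions of $S_l(w;\ul{z})$ jointly in the $u_J$'s at all. Instead it chooses, for each inner vertex $I$ of the binary tree $T$, the explicit elements
$$S_{l,T,I}^{(m)}=\sum_{j\in L(I)}\ Res_{w=\frac{z_j-z_k}{z_{k+1}-z_k}}\ w^m\,S_l\Bigl(w;\tfrac{z_1-z_k}{z_{k+1}-z_k},\ldots,\tfrac{z_n-z_k}{z_{k+1}-z_k}\Bigr),\qquad k=l(I),$$
i.e.\ sums of residues over the left branch at $I$ of the series rescaled to the scale of $I$, and the whole limit computation reduces to the elementary Lemma~\ref{lem-predel}: the evaluation homomorphism $\phi_{w-w_1,\ldots,w-w_n}$ simply factors through a diagonal embedding $D$ when points collide, and $\lim_{z\to\infty}\phi_z$ is the counit, so leaves outside the cluster drop out and subclusters contribute diagonally. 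With this choice, regularity on all of $U_{T,\sigma}$, algebraic independence at boundary points, and the identification with the generators of $\gamma_{k;M_1,\ldots,M_k}(\A(\ul{w});\A(\ul{u_1}),\ldots,\A(\ul{u_k}))$ are immediate; no cancellation of cross-terms ever has to be verified.

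Moreover, one concrete claim in your sketch is false as stated: the individual generators $S_l^{i,m}(\ul{z})$ do \emph{not} extend regularly to the boundary, even after multiplying by monomials in the $u_J$'s. Already for the quadratic Hamiltonians, $H_i$ has a pole along $\{z_i=z_j\}$, and at the scale of a fixed vertex $I$ the residue at a single point $z_j$ lying in a deeper collapsing subcluster ceases to be separately well defined in the limit: only the \emph{sum} of residues over an entire branch survives (this is precisely statement (2) of Lemma~\ref{lem-predel}, and precisely why the paper sums over $L(I)$ and gets exactly $n-1$ generators per invariant $\Phi_l$, matching Proposition~\ref{generators2}). So the missing idea is the correct selection of branch-summed, per-vertex-rescaled generators; once you have it, your inner/outer splitting, the Poincar\'e-series argument via Proposition~\ref{prop-substitution}, and your treatment of part (1) (which is no less detailed than the paper's own) go through as you describe.
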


\begin{rem} \emph{Aguirre, Felder and Veselov proved this in \cite{AFV} for quadratic components of $\A(\ul{z})$ generated by the elements $H_i(\ul{z})$. Also, in \cite{CFR} a set-theoretical version of this Theorem was proved, i.e. all the subalgebras $\A(\ul{z})$ corresponding to boundary points $\ul{z}\in\overline{M_{0,n+1}}$ were explicitly described. Our proof uses the ideas of \cite{CFR}.}
\end{rem}

\begin{proof}
First, we will need the following description of some limits of some generators of the subalgebras $\A(\ul{z})$.

\begin{lem}\label{lem-predel} \begin{enumerate} \item For $l=1,\dots,\rk\fg$, $m=1,\ldots,\deg S_l$, $k=1,\ldots, n-1$, the elements $\sum\limits_{j=1}^k Res_{w=w_j} w^m S_l(w;w_1,\ldots,w_n)$ are well-defined outside the hyperplanes $\{w_i=w_j\}$ for $1\le i \le k<j\le n$.
\item Suppose that for some $p\in\{1,\ldots, k\}$ we have $w_{1}=\ldots=w_{p}$. Then $\sum\limits_{j=1}^k Res_{w=w_j} w^m S_l(w;w_1,\ldots,w_n)=D_{[1,p],p+1,\ldots,n}(\sum\limits_{j=1}^k Res_{w=w_j} w^m S_l(w;w_p,w_{p+1},\ldots,w_n))$.
\item For any $p\ge k$ the limit $\lim\limits_{w_i\to\infty\ \forall i> p}\sum\limits_{j=1}^k Res_{w=w_j} w^m S_l(w;w_1,\ldots,w_n)$ is well-defined and equals $I_{[1,p]}(S_l(w;w_1,\ldots,w_p))$.
\end{enumerate}
\end{lem}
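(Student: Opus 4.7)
My proof strategy is to exploit the explicit form of $S_l(w;\ul{w}) := \phi_{w-w_1,\ldots,w-w_n}(S_l)$, which is a $U(\fg)^{\otimes n}$-valued rational function of $w$: each generator $g\otimes t^{-m}$ of $U(\fg_-)$ is sent to $\sum_{i=1}^n (w-w_i)^{-m} g^{(i)}$, so the only poles of $S_l(w;\ul{w})$ lie at $w=w_1,\ldots,w_n$.

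For part (1), the plan is to invoke the global residue theorem coefficient-wise. The sum over all residues on $\BC\BP^1$ of the rational function $w^m S_l(w;\ul{w})$ vanishes, so
\begin{equation*}
\sum_{j=1}^k \operatorname{Res}_{w=w_j} w^m S_l(w;\ul{w}) \;=\; -\operatorname{Res}_{w=\infty} w^m S_l(w;\ul{w}) \;-\; \sum_{j=k+1}^n \operatorname{Res}_{w=w_j} w^m S_l(w;\ul{w}).
\end{equation*}
Equivalently, the left-hand side equals $\tfrac{1}{2\pi i}\oint_\Gamma w^m S_l(w;\ul{w})\,dw$ for any cycle $\Gamma$ separating $\{w_1,\ldots,w_k\}$ from $\{w_{k+1},\ldots,w_n\}$. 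Such a $\Gamma$ exists and varies continuously with $\ul{w}$ whenever the two sets remain disjoint, i.e. off the hyperplanes $\{w_i=w_j\}$ with $i\le k<j$; so the sum extends to a regular function there, and no condition is imposed on possible coincidences among $w_1,\ldots,w_k$.

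For part (2), the key algebraic observation is that when $w_1=\cdots=w_p$, the evaluation homomorphism factors as
\begin{equation*}
\phi_{w-w_1,\ldots,w-w_n} \;=\; D_{[1,p],p+1,\ldots,n} \circ \phi_{w-w_p,w-w_{p+1},\ldots,w-w_n}\colon U(\fg_-)\to U(\fg)^{\otimes n},
\end{equation*}
which one checks on a generator $g\otimes t^{-m}$: both sides produce $(w-w_p)^{-m}\sum_{j=1}^p g^{(j)} + \sum_{i=p+1}^n (w-w_i)^{-m} g^{(i)}$. Applied to $S_l$, this yields $S_l(w;w_1,\ldots,w_n) = D_{[1,p],p+1,\ldots,n}\bigl(S_l(w;w_p,w_{p+1},\ldots,w_n)\bigr)$, and since $D_{[1,p],p+1,\ldots,n}$ is $\BC$-linear and independent of $w$, it commutes with the operator $\sum_{j=1}^k\operatorname{Res}_{w=w_j} w^m(\cdot)$.

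For part (3), as $w_i\to\infty$ for $i>p$ each function $(w-w_i)^{-m}$ tends to $0$ uniformly on compacta disjoint from $\{w_1,\ldots,w_p\}$, so termwise in the expression for $S_l$ one has $S_l(w;w_1,\ldots,w_n)\to I_{[1,p]}\bigl(S_l(w;w_1,\ldots,w_p)\bigr)$ uniformly on such compacta. The residues at the finite points $w_1,\ldots,w_k$ depend only on the local behavior of $w^m S_l$ near those points, so they pass to the limit, giving $I_{[1,p]}$ applied to the corresponding residue sum computed in $U(\fg)^{\otimes p}$. The only subtle step is (1), where the residue-at-infinity identity (equivalently a contour-deformation argument) is what allows the sum of residues to remain regular through collisions of the first $k$ arguments; parts (2) and (3) are then routine algebraic and analytic manipulations of the evaluation map.
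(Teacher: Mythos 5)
Your proof is correct and follows essentially the same route as the paper: part (2) via the factorization of the evaluation homomorphism through the diagonal embedding when points collide, and part (3) via the vanishing of the evaluation at infinity (which the paper phrases as the counit limit $\lim_{z\to\infty}\phi_z=\ep$). For part (1) the paper simply declares the regularity obvious from the fact that $\phi_{w-w_1,\ldots,w-w_n}$ is defined for all $\ul{w}$; your contour/residue-at-infinity argument is just the standard way of making that precise, not a different method.
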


\begin{proof} The first two assertions are obvious since the homomorphism $\phi_{w-w_1,\dots,w-w_n}$ is well-defined for all $w_i$ and when some of the $w_i$'s coincide it just factors through the corresponding diagonal embedding. The third one follows from the following obvious
\begin{lem}\label{lem-infty-limit}\cite{Ryb1}
The limit $\lim\limits_{z\to\infty}\phi_z$ is the counit map $\ep:U(\fg_-)\to\BC\cdot1\subset~U(\fg)$.
\end{lem} Indeed, we have
\begin{multline*}
\lim\limits_{w_i\to\infty}S_l(w;w_1,\ldots,w_n)=
\lim\limits_{w_i\to\infty}\phi_{w-w_1,\ldots,w-w_n}(S_l)=\\
=(\phi_{w-w_1}\otimes\dots\otimes\phi_{w-w_k}\otimes\ep\otimes\ldots\otimes\ep)\circ
\Delta_{\{1,\ldots,n\}}(S_l)=I(S_l(w;w_1,\ldots,w_n)).
\end{multline*}.

\end{proof}

To prove the first two assertions of the Theorem, we produce, for any planar binary rooted tree $T$, a set of generators of $\A(\ul{z})$ that are regular on $U_{T,\sigma}$ and algebraically independent at any point of $U_{T,\sigma}$. We can assume without loss of generality that $\sigma=e$.

Let us introduce some notation. We denote by $\le$ the partial order on the set of vertices of $T$, where the root vertex is minimal. To any inner vertex $I\in T$ we assign the subsets $L(I), R(I), LR(I)\subset [1,n]$ formed by all leaves of $T$ on the left branch of $T$ at $I$, right branch of $T$ at $I$ and on both of them, respectively. We denote by $l(I)$ the maximum of $L(I)$ and by $r(I)$ the minimum of $R(I)$.

Let $I$ be an inner vertex of the tree $T$. Set $k=l(I)$ then $r(I)=k+1$. To the vertex $I$ and a number $m=1,\ldots,\deg S_l$ we assign the element $S_{l,T,I}^{(m)}\in\A(\ul{z})$ defined as
\begin{multline*}
S_{l,T,I}^{(m)}:=\sum\limits_{j\in L(I)}Res_{w=\frac{z_j-z_k}{z_{k+1}-z_k}} w^m S_l(w;\frac{z_1-z_k}{z_{k+1}-z_k},\ldots,\frac{z_k-z_k}{z_{k+1}-z_k},\frac{z_{k+1}-z_k}{z_{k+1}-z_k},\dots,\frac{z_n-z_k}{z_{k+1}-z_k})=\\=\sum\limits_{j\in L(I)}Res_{w=\frac{z_j-z_k}{\prod\limits_{J\le I}u_J}} w^m S_l(w;\frac{z_1-z_k}{\prod\limits_{J\le I}u_J},\ldots,\frac{z_k-z_k}{\prod\limits_{J\le I}u_J},\frac{z_{k+1}-z_k}{\prod\limits_{J\le I}u_J},\dots,\frac{z_n-z_k}{\prod\limits_{J\le I}u_J}).
\end{multline*}
where $L(I)$ stands for the left branch of $T$ at $I$.
By Proposition~\ref{generators2}, the elements $S_{l,T,I}^{(m)}\in\A(\ul{z})$ together with the generators of the center of $\Delta_{[1,n]}U(\fg)\subset U(\fg)^{\otimes n}$ are algebraically independent and generate $\A(\ul{z})$ for $\ul{z}\in U_{T,e}\backslash\{\exists I : u_I=0\}$ (i.e. on the intersection of $U_{T,e}$ with the open stratum of $\wt{M_{0,n+1}}$).

Now we can compute $S_{l,T,I}^{(m)}$ at $\ul{z}\in U_{T,\sigma}$ such that some of the $u_J$'s vanish, i.e. $\ul{z}\in \wt{M_{T'}}\subset U_{T,\sigma}$ for some non-binary tree $T'$. Let $T(I)$ be the subtree containing $I$ and bounded by all inner vertices $J$ such that $u_J=0$. Let $J_0$ be the root of $T(I)$ and $J_1,\ldots,J_k$ be the leaves of $T(I)$, then by Lemma~\ref{lem-predel} $S_{l,T,I}^{(m)}=I_{LR(J_0)}\circ D_{LR(J_1),\ldots,LR(J_k)}(S_{T(I),I}^{(m)})$ (informally, the leaves not from $LR(J_0)$ do not contribute to $S_{l,T,I}^{(m)}$ and the contributions of the leaves from the same $LR(J_i)$ are equal). Hence $S_{l,T,I}^{(m)}\in\A(\ul{z})$ are well-defined. Note that $S_{l,T,I}^{(m)}=I_{LR(J_0)}\circ D_{LR(J_1),\ldots,LR(J_k)}(S_{T(I),I}^{(m)})$ for all $I$ form (together with the central generators of $\Delta_{[1,n]}U(\fg)$) the complete set of generators of the subalgebra obtained by the composition of the operations $\gamma$ according to the tree $T'$. Hence they an algebraically independent system for every $\ul{z}\in\wt{M_{T'}}\subset U_{T,e}$, and the third assertion of the Theorem is also proved.
\end{proof}

Now recall the result of Mukhin, Tarasov and Varchenko:

\begin{thm}\cite{MTV07} Let $\fg=\fsl_N$. For any collection $\ul{\l}$ of dominant integral weights, the space $\BV_{\ul{\l}}^{sing}$ is a cyclic module over $\A(\ul{z})$.
\end{thm}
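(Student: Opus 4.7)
Since $\mathcal{A}(\underline{z})$ is commutative, cyclicity of $\mathbb{V}_{\underline{\lambda}}^{sing}$ is equivalent to
$$
\dim \mathcal{A}(\underline{z})\big|_{\mathbb{V}_{\underline{\lambda}}^{sing}} \;=\; \dim \mathbb{V}_{\underline{\lambda}}^{sing}.
$$
The plan is to compute the left-hand side via a spectral realisation of the Bethe algebra and match it to the right-hand side via Schubert calculus on a Grassmannian. The first ingredient is the Feigin--Frenkel isomorphism implicit in \thmref{thm-feigin-frenkel}: the center of $U(\widehat{\fg})$ at the critical level is the algebra of functions on the space of $\check{\fg}$-opers on the formal punctured disc. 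Globalising to $\BP^1$ with marked points $z_1,\ldots,z_n,\infty$, this identifies the image of $\mathcal{A}(\underline{z})$ acting on the $\mu$-singular isotypic component $\mathbb{V}_{\underline{\lambda}}^{sing}[\mu]$ with the algebra of functions on the scheme $X_{\underline{\lambda},\underline{z},\mu}$ of Fuchsian $\check{\fsl}_N$-opers on $\BP^1$ whose exponents at $z_i$ are encoded by $\lambda_i$ and whose exponent at $\infty$ is encoded by $\mu$.

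For $\fg=\fsl_N$, $\check{\fsl}_N$-opers are scalar Fuchsian differential operators of order $N$ with regular singularities, and the classical Wronski map realises $X_{\underline{\lambda},\underline{z},\mu}$ as the scheme-theoretic intersection
$$
\bigcap_{i=1}^{n}\Omega_{\lambda_i}\bigl(F_\bullet(z_i)\bigr)\;\cap\;\Omega_{\mu}\bigl(F_\bullet(\infty)\bigr)
$$
of Schubert cycles in an auxiliary Grassmannian, where $F_\bullet(z)$ is the osculating flag of the rational normal curve at $z$. By the classical interpretation of Littlewood--Richardson coefficients as intersection numbers, the degree of this $0$-dimensional intersection equals $\dim \Hom_{\fg}(V_\mu,\mathbb{V}_{\underline{\lambda}}) = \dim\mathbb{V}_{\underline{\lambda}}^{sing}[\mu]$, and summing over $\mu$ recovers $\dim\mathbb{V}_{\underline{\lambda}}^{sing}$.

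The crucial and hardest step is to upgrade the numerical equality above to a scheme-theoretic one, i.e.\ to show that the intersection is reduced, so that $\dim\mathcal{O}(X_{\underline{\lambda},\underline{z},\mu})$ equals the Schubert intersection number rather than merely bounding it. This is the real content of the MTV theorem and is essentially the Shapiro--Shapiro conjecture: for \emph{real} $\underline{z}$, the algebra $\mathcal{A}(\underline{z})$ acts by commuting Hermitian operators on $\mathbb{V}_{\underline{\lambda}}^{sing}$, hence is semisimple; combined with the explicit bijection between joint eigenlines and points of $X_{\underline{\lambda},\underline{z},\mu}$ provided by the Bethe ansatz, this forces the number of joint eigenlines to equal the Schubert intersection number, forcing transversality. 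Flatness of the family in $\underline{z}$ and upper semicontinuity of fibre dimensions then propagate the equality to all $\underline{z}\in M_{0,n+1}$. The main obstacle is this reducedness/transversality statement; everything else is formal consequences of \thmref{thm-feigin-frenkel} and standard Schubert calculus.
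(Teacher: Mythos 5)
The paper itself offers no proof of this statement: it is imported verbatim from \cite{MTV07}, so your outline has to be judged against Mukhin--Tarasov--Varchenko's actual argument. You have correctly located the circle of ideas (Bethe algebra versus intersections of Schubert varieties attached to osculating flags, the Wronski map, the reality/transversality theorem), but two of your steps are genuinely broken. First, the opening reduction is false: for a commutative subalgebra $A\subset\End(V)$, cyclicity of $V$ does imply $\dim A|_V=\dim V$, but not conversely --- e.g. the span of $1,E_{13},E_{14},E_{23}$ in $\End(\BC^4)$ is commutative of dimension $4$ and admits no cyclic vector. The equivalence you use is valid only when the action is semisimple, which for $\A(\ul{z})$ is available only for real $\ul{z}$; for general complex $\ul{z}$ --- precisely the case where cyclicity says more than ``simple spectrum'' --- a dimension count cannot give the theorem. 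What \cite{MTV07} actually prove is the stronger statement that $\BV_{\ul{\l}}^{sing}$ (weight component by weight component) is isomorphic, as a module over the image of the Bethe algebra, to the regular representation of the coordinate ring of the \emph{scheme-theoretic} fiber of the Wronski map/Schubert intersection, which may be non-reduced; cyclicity is a corollary of that module identification. Accordingly, making reducedness (Shapiro--Shapiro) the ``crucial step'' misidentifies the content: for a $0$-dimensional proper intersection of Cohen--Macaulay Schubert varieties the length of the intersection scheme equals the intersection number whether or not it is reduced, and at special complex $\ul{z}$ the scheme genuinely fails to be reduced while cyclicity still holds.

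Second, the propagation step fails. The dimension of the image $\A(\ul{z})|_{\BV_{\ul{\l}}^{sing}}$ is a lower semicontinuous function of $\ul{z}$ (it is the rank of a family of linear maps from a flat family of subspaces to $\End(\BV_{\ul{\l}})$), and cyclicity is likewise an open condition; so knowing either on the real locus, which is only Zariski dense, yields them on a Zariski open dense subset of $M_{0,n+1}$, not at \emph{every} $\ul{z}$ as the theorem asserts --- semicontinuity runs in the wrong direction for you. Moreover, the ``explicit bijection between joint eigenlines and points of the fiber provided by the Bethe ansatz'' that you invoke for real $\ul{z}$ amounts to completeness of the Bethe ansatz, i.e. the very kind of statement one is trying to establish; the MTV reality/transversality argument uses only the easy direction (an eigenvector produces a point of the fiber) together with the degree of the Wronski map. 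So the heart of the theorem --- the identification of the module with the regular representation of the fiber, uniformly in $\ul{z}$ and in particular over non-reduced fibers --- is not supplied by your outline.
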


%\begin{rem} Bethe Ansatz Conjecture predicts that this should be true for $\fg$ of any type.
%\end{rem}
We generalize this theorem to any $\ul{z}\in \overline{M_{0,n+1}}$.

\begin{thm}\label{thm-cyclic} Let $\fg=\fsl_N$. \begin{enumerate} \item  For any collection $\ul{\l}$ of dominant integral weights and any $\ul{z}\in\overline{M_{0,n+1}}$, the space $\BV_{\ul{\l}}^{sing}$ is a cyclic module over $\A(\ul{z})$.
\item For any collection $\ul{\l}$ of dominant integral weights, the algebra $\A(\ul{z})$ with \emph{real} $\ul{z}$ has simple spectrum in the space $\BV_{\ul{\l}}^{sing}$.
\end{enumerate}
\end{thm}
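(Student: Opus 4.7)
The plan is to argue by induction on $n$. For the open stratum $\ul{z}\in M_{0,n+1}$ the statement (1) is exactly the Mukhin--Tarasov--Varchenko theorem cited just above. For $\ul{z}$ in a proper boundary stratum of $\overline{M_{0,n+1}}$, Theorem~\ref{thm-closure}(3) together with the description of strata as images of $\gamma_{k;M_1,\ldots,M_k}$ expresses $\A(\ul{z})$ as the image of
$\gamma_{k;M_1,\ldots,M_k}(\A(\ul{w});\A(\ul{u_1}),\ldots,\A(\ul{u_k}))$
with $k<n$ and $|M_i|<n$. Hence each component of the input is handled by the inductive hypothesis, and the task reduces to propagating cyclicity through the substitution operation $\gamma$. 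The base cases $n\le 2$ are either trivial or direct instances of MTV.

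\textbf{Cyclicity through $\gamma$.} For each $i$, decompose $\BV_{\ul{\l}_{M_i}}=\bigotimes_{j\in M_i}V_{\l_j}$ isotypically under $\Delta_{M_i}(\fg)$ as $\BV_{\ul{\l}_{M_i}}=\bigoplus_\mu V_\mu\otimes W^{(i)}_\mu$, where $W^{(i)}_\mu=\Hom_\fg(V_\mu,\BV_{\ul{\l}_{M_i}})$. Since $\A(\ul{u_i})$ commutes with $\Delta_{M_i}(\fg)$, it acts on each $W^{(i)}_\mu$, and the identification $\BV_{\ul{\l}_{M_i}}^{sing}\cong\bigoplus_\mu W^{(i)}_\mu$ turns the inductive hypothesis into the statement that $\bigoplus_\mu W^{(i)}_\mu$ is cyclic over $\A(\ul{u_i})$. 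By Proposition~\ref{generators2}(4), $\A(\ul{u_i})$ contains $Z(\Delta_{M_i}U(\fg))$, whose characters (by Harish-Chandra) separate the distinct $\mu$. Applying polynomials in these central elements to an overall cyclic vector projects to each $W^{(i)}_\mu$ and produces cyclic vectors $w^{(i)}_\mu\in W^{(i)}_\mu$ for $\A(\ul{u_i})$ on each multiplicity space. For the outer factor, observe
\begin{equation*}
\BV_{\ul{\l}}^{sing}=\bigoplus_{\ul{\mu}}\bigl(V_{\mu_1}\otimes\cdots\otimes V_{\mu_k}\bigr)^{sing}\otimes W^{(1)}_{\mu_1}\otimes\cdots\otimes W^{(k)}_{\mu_k},
\end{equation*}
and the subalgebra $D_{M_1,\ldots,M_k}(\A(\ul{w}))$ acts on the first tensor factor as the Gaudin algebra $\A(\ul{w})$ on $\bigotimes_iV_{\mu_i}$. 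By the inductive hypothesis (applied to $k<n$ points and weights $\ul{\mu}$), this algebra has a cyclic vector $v_{\ul{\mu}}\in(V_{\mu_1}\otimes\cdots\otimes V_{\mu_k})^{sing}$. Lemma~\ref{lem-factorization} says the full algebra $\A(\ul{z})$ is the tensor product of $D_{M_1,\ldots,M_k}(\A(\ul{w}))$ and $\bigotimes_iI_{M_i}(\A(\ul{u_i}))$ over the common central subalgebra, so on each $\ul{\mu}$-summand it acts as $\A(\ul{w})\otimes\bigotimes_i\A(\ul{u_i})$, making $v_{\ul{\mu}}\otimes\bigotimes_i w^{(i)}_{\mu_i}$ cyclic. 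Central characters of $\Delta_{M_i}(U(\fg))$ separate summands for distinct $\ul{\mu}$, so the single vector $\sum_{\ul{\mu}}v_{\ul{\mu}}\otimes\bigotimes_i w^{(i)}_{\mu_i}$ is cyclic for $\A(\ul{z})$ on all of $\BV_{\ul{\l}}^{sing}$.

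\textbf{Part (2).} Fix the tensor product of the natural positive Hermitian forms on the $V_{\l_i}$. For real $\ul{z}\in M_{0,n+1}$ the quadratic Hamiltonians $H_i(\ul{z})$ are Hermitian because the coefficients $(z_i-z_j)^{-1}$ are real and $\sum_ax_a^{(i)}x_a^{(j)}$ is Hermitian; the same is true for higher Gaudin Hamiltonians. Hence $\A(\ul{z})$ is generated by Hermitian operators, so acts semisimply. The operations $D_{M_1,\ldots,M_k}$ and $I_{M_i}$ preserve Hermiticity, so by Theorem~\ref{thm-closure}(3) the same holds for every real $\ul{z}\in\overline{M_{0,n+1}}$. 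Semisimplicity plus cyclicity (from (1)) forces simple spectrum, which is (2).

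\textbf{Main obstacles.} The computational content of the argument is entirely in (1), and its delicate point is two-fold: first, verifying that the inductive hypothesis really supplies cyclic vectors on each multiplicity space $W^{(i)}_\mu$ individually (this is what the central-character separation, together with Proposition~\ref{generators2}(4), is used for), and second, checking that the operad identification of Theorem~\ref{thm-closure}(3) really does allow the algebra on a boundary stratum to be treated as the tensor product of the inner and outer Gaudin algebras up to the common center, which is precisely the content of Lemma~\ref{lem-factorization}. Once these two ingredients are in place, combining cyclic vectors in the two factors and gluing across the $\ul{\mu}$-summands is routine, and part (2) then follows by the standard semisimple-plus-cyclic principle.
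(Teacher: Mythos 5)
Your proposal is correct and follows essentially the same route as the paper: induction on $n$ with the Mukhin--Tarasov--Varchenko theorem as the base case, the operadic identification of $\A(\ul{z})$ at boundary points from Theorem~\ref{thm-closure}, the isotypic decomposition under $D_{M_1,\ldots,M_k}(\fg^{\oplus k})$ to combine cyclic vectors of the inner and outer algebras, and Hermiticity plus cyclicity for part (2). The only difference is that you spell out the gluing across $\ul{\mu}$-summands via central-character separation and Lemma~\ref{lem-factorization}, a step the paper leaves implicit.
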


\begin{proof} To prove the first assertion, we proceed by induction on $n$. Suppose that $\ul{z}=\gamma_{k;M_1,\ldots,M_k}(\ul{w};\ul{u_1},\ldots,\ul{u_k})$. Then by Theorem~\ref{thm-closure} the corresponding subalgebra $\A(\ul{z})$ is generated by $I_{M_i}(\A(\ul{u_i}))$ and $D_{M_1,\ldots,M_k}(\A(\ul{w}))$. Let $\BV_{\ul{\l}}=\bigoplus W_{(\nu)}\otimes V_{(\nu)}$ be the decomposition of $\BV_{\ul{\l}}$ into the sum of isotypic component with respect to $D_{M_1,\ldots,M_k}(\fg^{\oplus k})$ with $V_{(\nu)}$ being the irreducible representation of $\fg^{\oplus k}$ with the highest weight $(\nu)=(\nu_1,\ldots,\nu_k)$ and $W_{(\nu)}:=\Hom_{\fg^{\oplus k}}(V_{(\nu)},\BV_{\ul{\l}})$ being the multiplicity space. By induction hypothesis, the multiplicity spaces $W_{(\nu)}$ are cyclic $\bigotimes\limits_{i=1}^k I_{M_i}(\A(\ul{u_i}))$-modules. On the other hand, the space of singular vectors of each $V_{(\nu)}$ is a cyclic $D_{M_1,\ldots,M_k}(\A(\ul{w}))$-module. Hence the entire module $\BV_{\ul{\l}}^{sing}$ is cyclic with respect to $\A(\ul{z})$.

For real $\ul{z}$ the generators of $\A(\ul{z})$ act by Hermitian operators in any $\BV_{\ul{\l}}$, and hence are diagonalizable, see e.g. Lemma~2 of \cite{FFR10}. Since there is a cyclic vector for the action of $\A(\ul{z})$ in the space $\BV_{\ul{\l}}^{sing}$ the joint eigenvalues of the generators on different eigenvectors are different.
\end{proof}

\begin{cor}\label{cor-covering} For any collection $\ul{\l}$ of dominant integral weights, the spectra of the algebras $\A(\ul{z})$ with \emph{real} $\ul{z}$ in the space $\BV_{\ul{\l}}^{sing}$ form a unbranched covering of $\overline{M_{0,n+1}(\BR)}$.
\end{cor}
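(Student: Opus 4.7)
The plan is to interpret the spectra of the algebras $\A(\ul{z})$ on $\BV_{\ul{\l}}^{sing}$ as the geometric fibers of one finite flat morphism defined over $\overline{M_{0,n+1}}$, and then to use the simple spectrum property to deduce étaleness---equivalently, unbranchedness---of this morphism over the real locus.

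First I would form the coherent sheaf $\A^{sing}$ on $\overline{M_{0,n+1}}$ as the image of the natural evaluation map $\A\to \End(\BV_{\ul{\l}}^{sing})\otimes_{\BC}\O_{\overline{M_{0,n+1}}}$ obtained by letting $\A$ act fiberwise on the trivial vector bundle with fiber $\BV_{\ul{\l}}^{sing}$. By Theorem~\ref{thm-cyclic}(1), $\BV_{\ul{\l}}^{sing}$ is cyclic as an $\A(\ul{z})$-module for \emph{every} $\ul{z}\in\overline{M_{0,n+1}}$. For any commutative algebra $A$ acting on a finite-dimensional space $V$ with a cyclic vector $v$, the annihilator of $v$ coincides with the annihilator of $V$, so the faithful quotient $A/\on{Ann}(V)$ has dimension exactly $\dim V$. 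Applied fiberwise this gives $\dim \A^{sing}(\ul{z})=\dim \BV_{\ul{\l}}^{sing}=:N$ independently of $\ul{z}$, so $\A^{sing}$ is locally free of rank $N$.

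Consequently the relative spectrum $\pi:\CS:=\on{Spec}_{\overline{M_{0,n+1}}}(\A^{sing})\to \overline{M_{0,n+1}}$ is a finite flat morphism of degree $N$ whose fiber over $\ul{z}$ is precisely $B_{\ul{\l}}(\ul{z})$, the set of characters of $\A(\ul{z})$ appearing in $\BV_{\ul{\l}}^{sing}$. I would then restrict $\pi$ to the real locus: by Theorem~\ref{thm-cyclic}(2), for every $\ul{z}\in\overline{M_{0,n+1}}(\BR)$ the algebra $\A(\ul{z})$ acts on $\BV_{\ul{\l}}^{sing}$ with simple spectrum, so the fiber $\pi^{-1}(\ul{z})$ consists of $N$ distinct real points (real because for real $\ul{z}$ the generators of $\A(\ul{z})$ act by Hermitian operators). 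A finite flat morphism whose fiber over a given point is reduced of maximum cardinality is étale at that fiber, so the restriction of $\pi$ over $\overline{M_{0,n+1}}(\BR)$ is étale, i.e.\ an unbranched topological covering of degree $N$.

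The main potential subtlety is the behaviour of $\A(\ul{z})$ at boundary strata of $\overline{M_{0,n+1}}$, where it is built via the operadic substitution maps of Theorem~\ref{thm-closure} rather than directly from Gaudin Hamiltonians; however both inputs we rely on---cyclicity on all of $\overline{M_{0,n+1}}$ and simple spectrum for real parameters---are already contained in Theorem~\ref{thm-cyclic}, so the proof reduces to packaging these pointwise statements into the sheaf-theoretic language above.
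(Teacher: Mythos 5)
Your proposal is correct and is essentially the argument the paper has in mind: the corollary is stated as an immediate consequence of Theorem~\ref{thm-cyclic}, with cyclicity at every point of $\overline{M_{0,n+1}}$ guaranteeing that the number of characters of $\A(\ul{z})$ on $\BV_{\ul{\l}}^{sing}$ is constantly $\dim\BV_{\ul{\l}}^{sing}$, and simple (real) spectrum at real $\ul{z}$ ruling out branching; your relative-$\on{Spec}$/finite-flat packaging just makes this explicit. The only point worth noting is that identifying the fiber of the image sheaf $\A^{sing}$ with the fiberwise image uses constancy of the fiberwise rank on the reduced (smooth) base, which is exactly what your cyclicity argument provides, so there is no gap.
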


\begin{cor}
The pure cactus group $PJ_n$ acts on the spectrum of $\A(\ul{z})$ in $\BV_{\ul{\l}}^{sing}$ for any $\ul{z}\in\overline{M_{0,n+1}(\BR)}$. Moreover, the group $J_n$ acts on spectra of $\A(\ul{z})$ permuting the coordinates of $\ul{z}$.
\end{cor}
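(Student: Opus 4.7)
The plan is to deduce the corollary directly from \corref{cor-covering} by standard covering-space theory, with one small bookkeeping step to incorporate $S_n$-equivariance for the second assertion. I do not expect any substantive obstacle: the only point that could in principle cause trouble -- whether the covering degenerates near boundary strata of $\overline{M_{0,n+1}}(\BR)$ -- is precisely what \corref{cor-covering} rules out via the cyclicity plus Hermiticity/diagonalisability statements of \thmref{thm-cyclic}. With that input, the rest of the argument is purely formal.

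For the $PJ_n$-action, I would apply \corref{cor-covering} to view the disjoint union $E:=\bigsqcup_{\ul{z}} B_{\ul{\l}}(\ul{z})$ as a finite unbranched covering of $\overline{M_{0,n+1}}(\BR)$. Since the base is a smooth compact manifold, standard monodromy furnishes a bijection $B_{\ul{\l}}(\ul{z}_0)\to B_{\ul{\l}}(\ul{z}_1)$ attached to each homotopy class of paths from $\ul{z}_0$ to $\ul{z}_1$; specialising to loops at a basepoint $\ul{z}_0$ yields the sought-for action of $PJ_n=\pi_1(\overline{M_{0,n+1}}(\BR),\ul{z}_0)$ on $B_{\ul{\l}}(\ul{z}_0)$.

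For the $J_n$-action I would exploit the manifest $S_n$-equivariance of the entire construction. Permutation of tensor factors of $U(\fg)^{\otimes n}$ by $\sigma\in S_n$ carries $\A(\ul{z})$ to $\A(\sigma\cdot\ul{z})$ and identifies $\BV_{\ul{\l}}^{sing}$ with $\BV_{\sigma\cdot\ul{\l}}^{sing}$; this is clear from the explicit generators displayed in \secref{sect-bethe-alg} together with \thmref{thm-closure}, since the operadic formulas there are symmetric in the marked points. Letting the weight vector range over the $S_n$-orbit of $\ul{\l}$, one thus obtains an $S_n$-equivariant total space of spectra over $\overline{M_{0,n+1}}(\BR)$. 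Descending to the quotient produces an orbifold covering of $\overline{M_{0,n+1}}(\BR)/S_n$, whose monodromy is exactly the desired action of $J_n=\pi_1^{\mathrm{orb}}(\overline{M_{0,n+1}}(\BR)/S_n)$ that covers the permutation of coordinates; restricting to the kernel of $J_n\to S_n$ recovers the $PJ_n$-action of the first assertion, which also verifies internal consistency.
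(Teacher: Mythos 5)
Your argument is correct and is essentially the proof the paper intends: the corollary is stated as an immediate consequence of Corollary~\ref{cor-covering}, with the $PJ_n$-action coming from ordinary monodromy of the unbranched covering and the $J_n$-action from the $S_n$-equivariance of the family $\A(\ul{z})$ together with the identification of $J_n$ as the orbifold fundamental group of $\overline{M_{0,n+1}}(\BR)/S_n$ already recalled in \secref{sect-deligne-mumford}. No gap; your bookkeeping of the $S_n$-orbit of $\ul{\l}$ matches what the paper leaves implicit.
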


\section{Cactus group and crystals}\label{sect-kamnitzer}

We list here some results on crystal commutors due to Henriques, Kamnitzer and Tingley, see \cite{HK,KT} for more details.

\subsection{Crystal bases.} Let $\BC_q$ be the field of rational functions of the formal variable $q^{\frac{1}{2}}$. Consider the quantum group $U_q(\fg)$ corresponding to the Lie algebra $\fg$. It is a Hopf algebra over $\BC_q$ with the standard Chevalley generators $e_i,f_i,q^{h},\ h\in X^\vee$ satisfying the following defining relations
\begin{equation}
\begin{array}{l}
q^0=1,\ q^{h_1+h_2}=q^{h_1}q^{h_2};\\
q^he_iq^{-h}=q^{(\alpha_i,h)}e_i;\\
q^hf_iq^{-h}=q^{-(\alpha_i,h)}f_i;\\
e_if_j-f_je_i=\delta_{ij}\frac{q^{d_ih_i}-q^{-d_ih_i}}{q^{d_i}-q^{-d_i}};
\end{array}
\end{equation}
and $q$-Serre relations, see e.g. \cite{Lu} for details. The comultiplication is defined on the generators of $U_q(\fg)$ as
\begin{equation}
\begin{array}{l}
\Delta(q^h)=q^h\otimes q^h;\\
\Delta(e_i)=e_i\otimes q^{d_ih_i}+1\otimes e_i;\\
\Delta(f_i)=f_i\otimes 1+ q^{-d_ih_i}\otimes f_i.
\end{array}
\end{equation}

The algebra $U_q(\fg)$ over the formal neighborhood of $q=1$ can be regarded as a deformation of the universal enveloping algebra $U(\fg)$ in the class of Hopf algebras. Moreover, for $q$ being not a root of unity, the category of finite-dimensional $U_q(\fg)$-modules is semisimple, and the irreducibles are indexed by the weight lattice of $\fg$. We denote by $V^q_\l$ the irreducible $U_q(\fg)$-module with the highest weight $\l$. It is a flat deformation of the $U(\fg)$-module $V_\l$. Moreover, $V^q_\l$ decomposes into the direct sum of weight spaces (i.e. joint eigenspaces of $q^h$, $h\in X^\vee$), $V^q_\l=\bigoplus V^q_\l(\mu)$. This is a flat deformation of the weight decomposition $V_\l=\bigoplus V_\l(\mu)$.

Define the divided powers of the generators $e_i, f_i$:
\begin{equation}
e_i^{(n)}:=\frac{e_i^n}{[n!]_q},\ f_i^{(n)}:=\frac{f_i^n}{[n!]_q},
\end{equation}
where $[n!]_q:=\prod\limits_{k=1}^n\frac{q^k-q^{-k}}{q-q^{-1}}$.

Let $V^q$ be a finite-dimensional representation of $U_q(\fg)$. Denote by $V^q(\mu)\subset V^q$ the weight space of the weight $\mu$. The Kashiwara operators $\tilde e_i, \tilde f_i$ on $V^q$ are defined as follows. Consider the $U_q(\fsl_2)$ generated by $e_i, f_i$. The space $V^q$ is decomposed into the direct sum of $U_q(\fsl_2)$-modules, $V^q=\oplus M^q_{l}$, where $M^q_l$ is the irreducible $U_q(\fsl_2)$-module of highest weight $l\in\BZ_+$. Each $v\in M^q_l$ represents as $v=f_i^{(m)}v_l$ where $v_l$ is the highest weight vector of $M^q_l$. Then by definition $\tilde f_i v=f_i^{(m+1)}v_l$ and $\tilde e_i v=f_i^{(m-1)}v_l$.

Let $\BC_q^\infty\subset \BC_q$ be the subring of rational functions which are regular at $\infty$ and $\fm_q^\infty\subset\BC_q^\infty$ be the maximal ideal of $\infty$. A \emph{crystal base} of $V^q$ is a pair $(L,\CB)$ satisfying the following conditions:

\begin{enumerate}
\item $L\subset V^q$ is a free $\BC_q^\infty$-module such that $V^q=\BC_q\otimes_{\BC_q^\infty}L$;
\item $\CB$ is a basis of $L/\fm_q^\infty L$;
\item $L=\bigoplus_{\mu}L(\mu)$ and $\CB=\coprod \CB(\mu)$ where $L(\mu)=L\cap V^q(\mu)$, $\CB(\mu)=\CB\cap L(\mu)/\fm_q^\infty L(\mu)$;
\item the operators $\tilde e_i$ and $\tilde f_i$ preserve the lattice $L$ (and hence operate on $L/\fm_q^\infty L$);
\item $\tilde e_i\CB\subset\CB\sqcup\{0\}$ and $\tilde f_i\CB\subset\CB\sqcup\{0\}$;
\item for $b,b'\in\CB$ we have $b'=\tilde e_i b$ if and only if $b=\tilde f_i b'$.
\end{enumerate}

This endows the set $\CB$ with the (purely combinatorial) structure of a \emph{crystal}. That is the set of maps $\tilde e_i, \tilde f_i: \CB\to \CB\sqcup\{0\}$ satisfying certain axioms, see \cite{HK} for details. It is natural to represent crystals as directed colored graphs, whose vertices are the elements of $\CB$ and edges of the $i$-th color are the maps $\tilde e_i$. The crystal base for $V^q$ always exists, and the corresponding crystal $\CB$ is uniquely determined by $V^q$. Thus we get a a category whose objects are crystals of finite-dimensional representations of $U_q(\fg)$ and morphisms are maps respecting the crystal structure. This category is semisimple in the sense that every object $\CB$ is a direct sum (i.e. set-theoretical union) of irreducibles $\CB_\l$ (i.e. crystals of irreducible $U_q(\fg)$-modules $V^q_\l$, $\l\in X(\fg)$), $\#\Hom(\CB_\l,\CB_\l)=1$ and $\Hom(\CB_\l,\CB_\mu)=\emptyset$ for $\l\ne\mu$.

\subsection{Monoidal structure.} Let $(L_1,\CB_1)$ and $(L_2,\CB_2)$ be crystal bases for $U_q(\fg)$-modules $M_1, M_2$ respectively. Then $(L_1\otimes L_2,\CB_1\otimes\CB_2)$ is a crystal base of the tensor product $M_1\otimes M_2$. This gives a structure of a crystal on $\CB_1\times\CB_2$: set $\varphi(b)=\max\{k\ |\ \tilde f_i^kb\ne0\}$, $\varepsilon(b)=\max\{k\ |\ \tilde e_i^kb\ne0\}$, then the maps $\tilde e_i,\tilde f_i$ on $\CB_1\times\CB_2$ are defined as
\begin{equation}
\tilde e_i (b_1\otimes b_2)=\left\{ \begin{array}{lcr} \tilde e_i b_1\otimes b_2 & \text{if} & \varphi_i(b_1)\ge \varepsilon_i(b_2) \\
 b_1\otimes \tilde e_i b_2 & \text{if} & \varphi_i(b_1) < \varepsilon_i(b_2) \end{array} \right.
\end{equation}
\begin{equation}
\tilde f_i (b_1\otimes b_2)=\left\{ \begin{array}{lcr} \tilde e_i b_1\otimes b_2 & \text{if} & \varphi_i(b_1)> \varepsilon_i(b_2) \\
 b_1\otimes \tilde e_i b_2 & \text{if} & \varphi_i(b_1) \le \varepsilon_i(b_2) \end{array} \right.
\end{equation}
Here $b_1\otimes b_2=(b_1,b_2)\in\CB_1\times\CB_2$ for $b_1\in\CB_1, b_2\in\CB_2$, and $b_1\otimes0:=0=:0\otimes b_2$. Thus the category of crystals is naturally a monoidal category.

\subsection{Example: $\fsl_2$ case.} Let $\fg=\fsl_2$. The irreducible $U_q(\fsl_2)$-modules are $V^q_\l$ with $\l\in\BZ_+$. The vectors $f^{(n)}v_\l$ generate a $\BC_q^\infty$-lattice $L_\l\subset V_\l$, and the projections of these vectors to $L_\l/\fm_q^\infty L_\l$ form a crystal basis. Hence the crystal of the irreducible $U_q(\fsl_2)$-module $V_\l^q$ has the form
$$
\bullet\longrightarrow\bullet\longrightarrow\dots\longrightarrow\bullet
$$
The tensor product of two irreducible crystals has the form
%(this is the tensor product $V_2^q\otimes V_3^q$)
$$
\begin{array}{lllllllll}
\bullet & &\vdots& & \bullet  & & \bullet  &\longrightarrow \ldots \longrightarrow & \bullet \\
\downarrow & &\vdots& & \downarrow  & &   & &  \\
\bullet  & &\dots& & \bullet  & \longrightarrow& \bullet  & \longrightarrow  \ldots \longrightarrow & \bullet \\
\downarrow & &   & &   & &    \\
\vdots & &   & &   & &    \\
\downarrow & &   & &   & &    \\
\bullet   & \longrightarrow& \dots & \longrightarrow& \bullet  & \longrightarrow& \bullet  & \longrightarrow \ldots \longrightarrow & \bullet
\end{array}
$$
Notice that the tensor products of given two $\fsl_2$-crystals in different order are isomorphic, but the isomorphism is not the transposition of the multiples.

\subsection{Braiding.} The category of $U_q(\fg)$-modules is braided, i.e. for any pair $M_1,M_2$ of $U_q(\fg)$-modules there is an isomorphism $R:M_1\otimes M_2\to M_2\otimes M_1$ which is functorial and satisfies the braid (Yang-Baxter) relation for any triple of $U_q(\fg)$-modules. This isomorphism is not an involution: for irreducible finite-dimensional $U_q(\fg)$-modules $V_{\l_1}^q$ and $V_{\l_2}^q$ , the operator $R^2:V_{\l_1}^q\otimes V_{\l_2}^q\to V_{\l_1}^q\otimes V_{\l_2}^q$ acts on each irreducible component isomorphic to $V_\l^q$ as the scalar $q^{c(\l)-c(\l_1)-c(\l_2)}$ where $c(\l)=(\l,\l+2\rho)$ is the value of the Casimir operator on the $\fg$-module $V_\l$. In particular, $R^2$ does not preserve any $\BC_q^{\infty}$-lattice in $V_{\l_1}^q\otimes V_{\l_2}^q$ and hence does not give any braiding on the category of crystals. In fact, there is no structure of a braided category on the category of crystals, see \cite{S}. However, there is a \emph{coboundary} category structure of on the category of $\fg$-crystals.

\subsection{Coboundary categories.} A \emph{coboundary category} is a monoidal category $\CalC$ along with natural isomorphisms $ s_{X,Y}: X\otimes Y \to Y \otimes X$ for all
$X, Y \in Ob\ \CalC$ satisfying the following conditions:
\begin{enumerate}
\item $s_{X,Y}\circ s_{Y,X}=\Id$, and
\item the \emph{cactus relation}: for all triples $X,Y,Z \in Ob\ \CalC$, the diagram
$$
\begin{CD}
X\otimes Y\otimes Z @>s_{X,Y}\otimes1>> Y\otimes X\otimes Z  \\
@VV1\otimes s_{Y,Z}V @VVs_{Y\otimes X, Z}V \\
X\otimes Z\otimes Y  @>s_{X,Z\otimes Y}>> Z\otimes Y\otimes X
\end{CD}
$$ commutes.
\end{enumerate}
The collection of maps  $s_{X,Y}$ is called a commutor.

Let $X_1,\ldots,X_n\in Ob\ \CalC$. Then, according to \cite{HK}, the morphisms $X_1\otimes\ldots\otimes X_n\to X_{\sigma(1)}\otimes\ldots\otimes X_{\sigma(n)}$, for all possible $\sigma\in S_n$, which are compositions of some $s_{X,Y}$'s generate an action of $J_n$. Hence $PJ_n$ acts by endomorphisms of $X_1\otimes\ldots\otimes X_n$ for any collection of objects of any coboundary category.

\subsection{Crystal commutor.} Let $c(\l)=(\l,\l+2\rho)$ be the value of the Casimir operator on the irreducible $\fg$-module $V_\l$. It was noticed by Drinfeld that the morphism $\ol{R}:V_{\l_1}^q\otimes V_{\l_2}^q\to V_{\l_2}^q\otimes V_{\l_1}^q$ acting as $Rq^{\frac{-c(\l)+c(\l_1)+c(\l_2)}{2}}$ on $V_\l^q$-isotypic component of $V_{\l_1}^q\otimes V_{\l_2}^q$ is involutive and defines a structure of \emph{coboundary category} on $U_q(\fg)$-modules. The morphism $\ol{R}$ is called the \emph{unitarized} $R$-matrix.

One can define the unitarized $R$-matrix universally. Namely, there is an element $r$ in the completed tensor square of $U_q(\fg)$ acting as the composition of the flip and the operator $R$ in the tensor product of any pair of highest weight modules. Also, we have Drinfeld's Casimir operator $q^C$ in the completed $U_q(\fg)$ acting as $q^{c(\l)}$ in any $V^q_\l$. So, $\ol{R}$ can be written universally as $q^{\frac{C_2+C_2-C_{12}}{2}}\circ\text{flip}\circ r$.

Let $V^q_{\l_1}\otimes\ldots\otimes V^q_{\l_n}$ be a tensor product of irreducible $U_q(\fg)$-modules. The action of the cactus group $J_n$ on this space is defined as follows: the generator $s_{[k,l,m]}$ acts as the unitarized $R$-matrix $\ol{R_{[k,l];[l+1,m]}}$ transposing the neighboring factors $V^q_{\l_k}\otimes\ldots\otimes V^q_{\l_l}$ and $V^q_{\l_{l+1}}\otimes\ldots\otimes V^q_{\l_m}$.

It was shown in \cite{KT} that this morphism sends a crystal base to a crystal base hence giving a structure of a coboundary category on the category of $\fg$-crystals.

\begin{rem}  There are different definitions of the crystal commutor which work for any Kac-Moody $\fg$ and are equivalent to the definition above for finite dimensional $\fg$, see \cite{S}.
\end{rem}

\subsection{Example: $\fsl_2$ case.} For $\fg=\fsl_2$, the tensor product $\CB_{\l_1}\otimes\CB_{\l_2}$ is just the set $[0,\l_1]\times [0,\l_2]$, the element $(x,y)$ corresponds to the base vector $f^{(x)}v_{\l_1}\otimes f^{(y)}v_{\l_2}$. The commutor $s:[0,\l_1-1]\times [0,\l_2-1]\to[0,\l_1-1]\times [0,\l_2-1]$ is the following piecewise linear transformation:
\begin{equation}
s(x,y)=(y+(\l_1-x-y)_+-(\l_2-x-y)_+,x+(\l_2-x-y)_+-(\l_1-x-y)_+),
\end{equation}
where we set $(a)_+:=\max(0,a)$. This map is uniquely determined by the following property: each weight space of $\CB_{\l_1}\otimes\CB_{\l_2}$ is ordered by the values of the first coordinate $x$; the commutor $s$ preserves weight spaces and reverses the ordering on each of them.

Now let us describe the action of the generators $s_{p,q}\in J_n$ on $V^q_{\l_1}\otimes\ldots\otimes V^q_{\l_n}$. Choose a complete bracketing of the tensor product containing the pair of brackets bounding the segment $[p,q]$. Let $T$ be the corresponding binary rooted tree, $I$ be its inner vertex corresponding to this pair of brackets and $\ge$ be the partial order on inner vertices of $T$.

\begin{prop} The morphism $s_{p,q}:V_{\l_1}^q\otimes\ldots\otimes V_{\l_p}^q\otimes\ldots\otimes V_{\l_q}^q\otimes\ldots\otimes V_{\l_n}^q\to V_{\l_1}^q\otimes\ldots\otimes V_{\l_q}^q\otimes\ldots\otimes V_{\l_p}^q\otimes\ldots\otimes V_{\l_n}^q$ is given by the formula
$$
s_{p,q}=\prod\limits_{J\ge I}R_{L(J);R(J)}\cdot q^{-\frac{C_{LR(I)}}{2}}\prod\limits_{i\in LR(I)}q^{\frac{C_i}{2}}.
$$
\end{prop}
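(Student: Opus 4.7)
The strategy is induction on $q - p$, exploiting the recursive structure of the subtree $T(I)\subset T$ consisting of inner vertices $J\ge I$. The base case $q = p+1$ is immediate: the subtree $T(I)$ contains only $I$, whose two children are the leaves $p$ and $q$, so the right-hand side reduces to
\[
R_{\{p\};\{q\}} \cdot q^{-C_{\{p,q\}}/2}\cdot q^{C_p/2}\cdot q^{C_q/2},
\]
which is exactly the universal expression $q^{(C_1+C_2-C_{12})/2}\circ \text{flip}\circ r$ for the unitarized $R$-matrix, and this equals the crystal commutor $s_{p,p+1}$ by definition.

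For the inductive step, set $r = l(I)$, so $L(I) = [p,r]$ and $R(I) = [r+1,q]$; let $I_L$, $I_R$ denote the roots of the left and right sub-subtrees (each possibly trivial). The main structural input is the recursive identity
\[
s_{p,q} \;=\; \overline{R}_{V_{L(I)},\,V_{R(I)}} \circ \bigl( s_{p,r}\otimes s_{r+1,q} \bigr),
\]
which expresses the reversal of the segment $[p,q]$ as: first reverse the two halves by $s_{p,r}$ and $s_{r+1,q}$, then swap them via the commutor attached to $I$. Applying the inductive hypothesis to $s_{p,r}$ (with subtree $T_L$ rooted at $I_L$) and to $s_{r+1,q}$ (with $T_R, I_R$), together with the universal formula $\overline{R}_{V_A, V_B} = q^{(C_A+C_B-C_{AB})/2}R_{A;B}$, writes $s_{p,q}$ as a product of $R$-matrices and $q$-Casimir exponentials.

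To consolidate this into the claimed form, two things are needed. First, each $C_M = \Delta_M(C)$ is central in $U(\fg)^{\otimes n}$ and commutes with every $R$-matrix appearing in the product: this follows from $r\Delta(x) = \Delta^{op}(x)r$ applied to the central Casimir $C$. Hence all the $q$-Casimir factors slide freely past the $R$-matrices to the right. Second, one needs the telescoping identity
\[
\sum_{J\ge I}\bigl(C_{L(J)} + C_{R(J)} - C_{LR(J)}\bigr) \;=\; -C_{LR(I)} + \sum_{i\in LR(I)}C_i,
\]
which is an immediate double count: every internal subtree root $J'>I$ contributes $+C_{LR(J')}$ from appearing as a child of its parent and $-C_{LR(J')}$ from its own term, so only the boundary contributions at the top vertex $I$ and at the leaves of $T(I)$ survive.

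The main obstacle is justifying the recursive decomposition $s_{p,q} = \overline{R}_{V_{L(I)},V_{R(I)}}\circ(s_{p,r}\otimes s_{r+1,q})$. Intuitively this is clear from the Henriques--Kamnitzer realization of the cactus group action, but rigorously it requires either invoking the cactus axiom and naturality of the commutor in a coboundary category, or directly unpacking the definition of $s_{p,q}$ through the bracketing encoded by $T$ and showing it agrees with the recursive reversal. Once that identity is in hand, the rest of the proof is bookkeeping: apply induction, slide Casimirs through $R$-matrices using centrality, and apply the telescoping identity to recover the stated formula.
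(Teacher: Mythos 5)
Your argument is correct and is essentially the expansion the paper intends: its own proof is the one-line remark that the formula is straightforward from the definition of the unitarized $R$-matrix and the action of $s_{[k,l,m]}$, which is exactly your recursion $s_{p,q}=\overline{R}_{L(I);R(I)}\circ(s_{p,r}\otimes s_{r+1,q})$ combined with the universal form of $\overline{R}$ and the telescoping of Casimir factors. One small wording fix: $C_M=\Delta_M(C)$ is \emph{not} central in $U(\fg)^{\otimes n}$; what you actually need (and do correctly justify via $r\Delta(x)=\Delta^{op}(x)r$ and the centrality of $C$) is that $q^{C_M/2}$ commutes with each $R$-matrix occurring in the product, these being module maps for the relevant diagonal actions, with the index sets understood to follow the permuted factors.
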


\begin{proof}
Straightforward from the definition of the unitarized $R$-matrix and the formula for the action of $s_{[k,l,m]}$.
\end{proof}

\section{Asymptotic solutions of the KZ equation}\label{sect-varchenko}

We reproduce here the results of Varchenko \cite{Var}. These results play the key role in the proof of the Etingof conjecture for $\fsl_2$.

Let ${\rm Conf}_n$ be the space of $n$-tuples of pairwise distinct complex numbers $(z_1,\ldots,z_n)$ such that $\sum\limits_{i=1}^n z_i=0$. Consider the Knizhnik-Zamolodchikov (KZ) connection on the trivial bundle on ${\rm Conf}_n$ with the fiber $\BV_{\ul{\l}}^{sing}$:
\begin{equation}
\nabla:=d-\frac{1}{\kappa}\sum\limits_{i=1}^n H_idz_i=d-\frac{1}{2\kappa}\sum\limits_{i<j}(C_{ij}-C_i-C_j)d\log(z_i-z_j).
\end{equation}

\begin{rem}
${\rm Conf}_n$ is the total space of the restriction of $\CL_{n+1}$ to the open stratum $M_{0,n+1}$. The Gaudin model can be regarded as a limit of the KZ connection as $\kappa\to0$.
\end{rem}

Let $V_\l^q$ be the irreducible $U_q(\fsl_2)$-module of highest weight $\l$. For $\ul{\l}=(\l_1,\ldots,\l_n)$, denote by $\BV_{\ul{\l}}^q$ the tensor product $V_{\l_1}^q\otimes\ldots\otimes V_{\l_n}^q$.

To any permutation $\sigma\in S_n$ we assign the connected component of ${\rm Conf}_n(\BR)$:
$$D_\sigma:=\{\ul{z}\in {\rm Conf}_n\ |\ z_{\sigma(i)}>z_{\sigma(i+1)}, i=1,\ldots,n-1\}.$$
For any planar binary rooted tree $T$ compatible with the ordering $\sigma$, we have the coordinates $u_I$ on $D_{\sigma}$. The functions $u_I$ give a diffeomorphism $D_{\sigma}\to\BR_{>0}^{n-1}$. The KZ equation in the coordinates $u_I$ has the following form:
\begin{equation}\label{KZ-asymptotic-zone}
\nabla=d-\frac{1}{2\kappa}\sum\limits_{I\in T}(C_{LR(I)}-\sum\limits_{i\in LR(I)}C_i)d\log u_I + \omega(u),
\end{equation}
where $\omega(u)$ is a $1$-form regular at $u_I=0$.

\begin{rem}The operators $C_{LR(I)}-C_{L(I)}-C_{R(I)}$ pairwise commute and have simple joint spectrum on $\BV_{\ul{\l}}^{sing}$. Hence they also have simple joint spectrum on any weight subspace $\BV_{\ul{\l}}(\mu)$ with respect to the diagonal $\fsl_2$-action.
\end{rem}

Let $v\in\BV_{\ul{\l}}(\mu)$ be a joint eigenvector of the operators $C_{LR(I)}-C_{L(I)}-C_{R(I)}$ with the eigenvalues $2\mu_I$. To such $v$ one assigns the \emph{asymptotic solution} to the equation~(\ref{KZ-asymptotic-zone}) on $D_\sigma$ of the form
\begin{equation}
\psi_{\sigma,T,v}(u)=\prod\limits_{I\in T} u_I^{\frac{\mu_I}{\kappa}}(v+\ol{o}(u)).
\end{equation}
We choose the branches of the functions $u_I^{\frac{\mu_I}{\kappa}}$ by the rule: $\arg(u_I^{\frac{\mu_I}{\kappa}}) = 0$ on $D_\sigma$. It is shown in \cite{Var} that $\psi_{\sigma,T,v}(u)$ has the following expansion with respect to $\kappa$:
\begin{equation}\label{asymptotic-solution}
\psi_{\sigma,T,v}(u)=\prod\limits_{I\in T} u_I^{\frac{\mu_I}{\kappa}}\exp(\frac{S(u)}{\kappa})\sum\limits_{j=0}^\infty f_{j,\sigma,T,v}(u)\kappa^j,
\end{equation}
where $S(u)$ is a real analytic function well-defined at $u_I=0$, and $f_{j,\sigma,T,v}(u)$ are $\BV_{\ul{\l}}$-valued real analytic functions well-defined at $u_I=0$. In particular $f_{0,\sigma,T,v}(u)$ is a joint eigenvector for $H_i(u)$ such that $f_{0,\sigma,T,v}(0)=v$. The functions~(\ref{asymptotic-solution}) form a basis of the space ${\rm Sol}_\sigma$ of solutions of the KZ equation on $D_\sigma$ (i.e. of the space of flat sections with respect to $\nabla$).

According to the Drinfeld-Kohno theorem, to each sector $D_\sigma\in {\rm Conf}_n(\BR)$ one can assign an isomorphism $\pi_\sigma$ between the space ${\rm Sol}_\sigma$ of solutions of the KZ equation on $D_\sigma$ and the space $\BV_{\sigma\ul{\l}}^q$ for $q=\exp(\frac{\pi i}{\kappa})$, such that the transitions between the chambers are given by the $R$-matrices. In \cite{Var} such isomorphisms $\pi_\sigma$ were explicitly constructed. More precisely, the collection of isomorphisms $\pi_\sigma$ make the following diagram commutative (here the upper horizontal arrow is the analytic continuation in counter-clockwise direction around the hyperplane $z_{\sigma(i)}=z_{\sigma(i+1)}$):
$$
\begin{CD}
Sol_\sigma @>>> Sol_{\ol{s_{i,i+1}}\sigma}   \\
@VV\pi_\sigma V @VV\pi_{\ol{s_{i,i+1}}\sigma}V \\
\BV_{\sigma\ul{\l}}^q  @> R_{i,i+1} >> \BV_{\ol{s_{i,i+1}}\sigma\ul{\l}}^q
\end{CD}
$$

The isomorphisms $\pi_{\sigma}$ from \cite{Var} have the following nice additional property which is crucial for Conjecture~\ref{conj-etingof}:

\begin{thm}\label{thm-varchenko}\cite{Var} There is a normalizing constant $N(\kappa, T, v)$ such that the image of the asymptotic solutions $N(\kappa, T, v)\psi_{\sigma,T,v}$ under $\pi_\sigma$ is a crystal base of $\BV_{\sigma\ul{\l}}^q$.
\end{thm}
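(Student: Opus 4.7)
The plan is to follow Varchenko's construction directly, exploiting the iterated structure of both sides. First, I would fix a tree $T$ compatible with $\sigma$ and organize the asymptotic zone $D_\sigma$ by the hierarchy of bubblings encoded by $T$: as $u_I \to 0$ in order of depth, clusters of points $z_j$ with $j \in LR(I)$ collide. From (\ref{KZ-asymptotic-zone}) the residue of the KZ connection along $\{u_I = 0\}$ is $\tfrac{1}{2\kappa}(C_{LR(I)} - \sum_{i\in LR(I)} C_i)$, so the tree $T$ together with a joint eigenvector $v \in \BV_{\ul\l}(\mu)$ of the commuting operators $C_{LR(I)} - C_{L(I)} - C_{R(I)}$ (with eigenvalues $2\mu_I$) produces, in standard Frobenius/asymptotic-zone fashion, a unique asymptotic solution $\psi_{\sigma,T,v}$ with leading behavior $\prod_I u_I^{\mu_I/\kappa}\,v$ and the WKB expansion (\ref{asymptotic-solution}).

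Second, I would analyze $\pi_\sigma$ on $\psi_{\sigma,T,v}$ using the iterated $q$-hypergeometric (Selberg-type) integral representation of KZ solutions. Inductively along $T$, at each inner vertex $I$ the Clebsch--Gordan projection of $V^q_{L(I)} \otimes V^q_{R(I)}$ onto its $V^q_{\mu'_I}$-isotypic component corresponds, under $\pi_\sigma$, exactly to selecting the eigenvalue branch $\mu_I$ of $C_{LR(I)} - C_{L(I)} - C_{R(I)}$; here $\mu'_I$ is read off from $\mu_I$ via $c(\mu'_I) = 2\mu_I + c(\l_{L(I)}) + c(\l_{R(I)})$. Iterating this identification from the leaves to the root identifies $\pi_\sigma(\psi_{\sigma,T,v})$ with a specific vector in the iterated Clebsch--Gordan decomposition of $\BV^q_{\sigma\ul\l}$ along the bracketing determined by $T$, up to an explicit scalar coming from the Gamma-function prefactors of the hypergeometric integrals.

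Third, I would match this with a crystal base. By Kashiwara's theorem, a crystal base of $V^q_{\l_1} \otimes V^q_{\l_2}$ for $\fsl_2$ is obtained by first decomposing into irreducible $V^q_\nu$'s and then choosing the crystal base on each summand; applied inductively along $T$, this produces a crystal base of $\BV^q_{\sigma\ul\l}$ indexed precisely by tuples $(\mu_I)$ compatible with the Clebsch--Gordan rule. The remaining task is to pin down the normalizing constant $N(\kappa,T,v)$ so that $N \cdot \pi_\sigma(\psi_{\sigma,T,v})$ lies in the Kashiwara lattice $L \subset \BV^q_{\sigma\ul\l}$ and projects modulo $\fm_q^\infty L$ onto the corresponding crystal-base element; this is a bookkeeping of the ratios between the hypergeometric normalization used to build $\pi_\sigma$ and the $U_q(\fsl_2)$ lowest-weight normalization $f^{(n)} v_\l$ at each vertex.

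The main obstacle is precisely the last step: proving that after normalization, the asymptotic solutions lie in $L$ and have the prescribed image in $L/\fm_q^\infty L$. This requires matching the $\kappa \to 0$ WKB behavior of KZ solutions with the $q \to \infty$ crystal limit under $q = \exp(\pi i /\kappa)$, and checking that the leading term in (\ref{asymptotic-solution}) reproduces the combinatorial tensor-product crystal rule. This is the content of Varchenko's analysis, and I would simply invoke the explicit integral formulas from \cite{Var} to conclude, rather than redo the saddle-point computation from scratch.
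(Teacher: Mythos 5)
This statement is imported verbatim from Varchenko \cite{Var}; the paper offers no proof of its own beyond the citation, so there is no internal argument to compare against. Your sketch is a reasonable reconstruction of Varchenko's route (asymptotic zones organized by the tree $T$, iterated Clebsch--Gordan/hypergeometric identification of $\pi_\sigma(\psi_{\sigma,T,v})$, then normalization into the Kashiwara lattice), and like the paper you ultimately defer the genuinely hard step --- that the normalized solutions lie in $L$ and project to the crystal basis in $L/\fm_q^\infty L$ --- to \cite{Var}, which is exactly where the paper places it.
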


\section{Proof of Etingof's conjecture for $\fsl_2$}\label{sect-main}

Let $\fg=\fsl_2$. According to Theorem~\ref{thm-varchenko}, each map $\pi_\sigma$ induces a bijection $\ol{\pi_\sigma}:B_{\ul{\l}}|_{D_\sigma}\to\CB_{\sigma\ul{\l}}$ which takes a Bethe eigenvector $f_{0,T,\sigma,v}(u)$ to the corresponding element of the crystal basis. Given an element $g\in J_n$, consider a path in $\ol{M_{0,n+1}}(\BR)$ representing it and connecting $D_\sigma$ with $D_{\ol{g}\sigma}$. Each element $f\in B_{\ul{\l}}|_{D_\sigma}$ can be analytically continued along this path thus giving an element $g_{an}(f)\in B_{\ul{\l}}|_{D_{\ol{g}\sigma}}$. On the other hand, for any $b\in\CB_{\sigma\ul{\l}}$ there is an element $g(b)\in\CB_{\ol{g}\sigma\ul{\l}}$.

\begin{thm}\label{main-theorem}\footnote{This result together with the idea of the proof was suggested by Pavel Etingof.} For any $g\in J_n$, the following diagram commutes.
$$
\begin{CD}
B_{\ul{\l}}|_{D_\sigma} @> g_{an}>> B_{\ul{\l}}|_{D_{\ol{g}\sigma}}   \\
@VV\ol{\pi_\sigma}V @VV\ol{\pi_{\ol{g}\sigma}}V \\
\CB_{\sigma\ul{\l}}  @> g >> \CB_{\ol{g}\sigma\ul{\l}}
\end{CD}
$$
\end{thm}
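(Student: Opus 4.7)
The strategy is to reduce the commutativity of the square to a check on generators of $J_n$ and then use the asymptotic analysis of Section~\ref{sect-varchenko} to identify the monodromy with the crystal commutor. Both $g \mapsto g_{an}$ and the cactus-group action on crystals are genuine $J_n$-actions on $B_{\ul{\l}}(\ul{z})$ and $\CB_{\ul\l}$ respectively (the first by concatenation of paths, the second by the coboundary-category formalism of Section~\ref{sect-kamnitzer}), and both $\ol{\pi_\sigma}$ are bijections. So it suffices to verify the diagram for one set of generators. I would use $s_{[k,l,m]}$: each of these is represented in $\ol{M_{0,n+1}}(\BR)$ by a path crossing a single codimension-one stratum, and on the crystal side it acts by the unitarized block $R$-matrix $\ol{R}_{[k,l];[l+1,m]}$ whose ``unitarization factor'' is a scalar on the $C_{LR(I)}$-eigenspaces, which makes the matching with an asymptotic-expansion phase very transparent.

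Fix $s_{[k,l,m]}$ and choose a planar binary rooted tree $T$ compatible with $\sigma$ with an inner vertex $I$ satisfying $L(I)=\{k,\ldots,l\}$ and $R(I)=\{l+1,\ldots,m\}$. In the chart $U_{T,\sigma}$ with coordinates $u_J$, a path $\gamma$ representing $s_{[k,l,m]}$ can be chosen so that every $u_J$ for $J\ne I$ stays in $\BR_{>0}$ while $u_I$ travels from a small positive value, along a short semicircle in the complex plane around $0$, to a small negative value in the neighboring chart $U_{T,\sigma'}$, where $\sigma'=\ol{s_{[k,l,m]}}\sigma$. For a joint eigenvector $v\in\BV_{\ul{\l}}(\mu)$ of the commuting operators $C_{LR(J)}-\sum_{i\in LR(J)}C_i$ with eigenvalues $2\mu_J$, the asymptotic solution \eqref{asymptotic-solution} has leading behavior $u_I^{\mu_I/\kappa}(v+\bar o(u))$ near $u_I=0$, all other factors being analytic across the semicircle. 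Consequently the analytic continuation of $\psi_{\sigma,T,v}$ along $\gamma$ picks up a scalar $e^{\pm i\pi\mu_I/\kappa}=q^{\pm\mu_I}$ (with $q=e^{\pi i/\kappa}$) and lands on $\psi_{\sigma',T,v}$, up to this monodromy factor; passing to the $\kappa\to 0$ limit, $g_{an}$ sends the Bethe vector $f_{0,\sigma,T,v}$ to a corresponding Bethe vector in $D_{\sigma'}$ weighted by the same phase.

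Now compare with the $U_q(\fg)$-side. By Theorem~\ref{thm-varchenko}, the normalized $\psi_{\sigma,T,v}$ are mapped under $\pi_\sigma$ to crystal-base elements of $\BV^q_{\sigma\ul{\l}}$. The path $\gamma$ factors, up to homotopy, into a transport in the complex configuration space that permutes the two blocks of indices (and produces the ordinary block $R$-matrix $R_{[k,l];[l+1,m]}$ via iteration of the Drinfeld--Kohno square displayed before Theorem~\ref{thm-varchenko}), and a small half-loop around the real codimension-one divisor $\{u_I=0\}$ (which produces the scalar $q^{\pm\mu_I}$). On the $C_{LR(I)}$-eigenspaces this scalar equals $q^{\pm (C_{LR(I)}-\sum_{i\in LR(I)}C_i)/2}$, which is exactly Drinfeld's unitarization constant converting $R_{[k,l];[l+1,m]}$ into $\ol{R}_{[k,l];[l+1,m]}$. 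Thus $\ol{\pi_{\sigma'}}\circ g_{an}\circ \ol{\pi_\sigma}^{-1}$ coincides with the crystal commutor $s_{[k,l,m]}$, yielding commutativity on the generator.

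The main obstacle is sign and branch bookkeeping: one must check that the direction of the semicircle that makes $\gamma$ homotopic to $s_{[k,l,m]}$ (rather than to its inverse or to some other element in the fiber of $J_n\to S_n$) produces exactly Drinfeld's unitarization phase and not its inverse, and that the normalizing constants $N(\kappa,T,v)$ of Theorem~\ref{thm-varchenko} transport consistently along $\gamma$ so that crystal-base elements really go to crystal-base elements throughout the continuation. Both points are pure, if delicate, computations internal to the construction of $\pi_\sigma$ in \cite{Var} and the expansion \eqref{asymptotic-solution}, but no new conceptual input is needed.
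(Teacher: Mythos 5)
Your proposal is correct and takes essentially the same route as the paper's proof: reduce to the generators $s_{[k,l,m]}$, work in the chart $U_{T,\sigma}$ for a compatible tree with distinguished inner vertex $I$, use Varchenko's asymptotic solutions so that the continuation across $u_I=0$ produces the scalar $q^{-\mu_I}=q^{-\frac12(C_{LR(I)}-C_{L(I)}-C_{R(I)})}$, and combine this with the Drinfeld--Kohno block $R$-matrix to recognize the unitarized $\ol{R}_{[k,l,m]}$, i.e.\ the crystal commutor. The branch/sign bookkeeping you flag as the main obstacle is settled in the paper simply by the stipulation $\arg\bigl(u_I^{\mu_I/\kappa}\bigr)=0$ on $D_\sigma$ together with counter-clockwise continuation, and the identification of the transported Bethe vectors is immediate because $f_{0,\sigma,T,v}$ extends real-analytically across $u_I=0$ and restricts to $f_{0,\sigma',T,v}$.
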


\begin{proof} Let $\sigma'=s_{[k,l,m]}\sigma$, $T$ be any tree compatible with $s_{[k,l,m]}$ and $I$ be the corresponding inner vertex of $T$. Let $U_{\sigma,T}$ be the corresponding chart. From Theorem~\ref{thm-varchenko}, we see that, for each asymptotic solution $\psi_{\sigma,T,v}$ on $D_\sigma$, the $\BV_{\ul{\l}}$-valued function $f_{0,\sigma,T,v}$ on $D_\sigma$ continues to $U_{\sigma,T}$ as a well-defined real analytic function. Clearly, its restriction to $D_{\sigma'}\subset U_{\sigma,T}$ is $f_{0,\sigma',T,v}$. To prove the Theorem, it suffices to show that $\pi_{\sigma'}(\psi_{\sigma',T,v})=\ol{R}_{[k,l,m]}\pi_{\sigma}(\psi_{\sigma,T,v})$.

The asymptotic solution $\psi_{\sigma,T,v}(u)$ continues as a holomorphic function (in counter-clockwise direction) to $U_{\sigma',T}$, and by our choice of the branch of $u_I^{\frac{\mu_I}{\kappa}}$ we have $\pi_{\sigma'}(\psi_{\sigma',T,v})=\pi_{\sigma'}(q^{-\mu_I}\psi_{\sigma,T,v})=\pi_{\sigma'}(q^{-\frac{1}{2}(C_{LR(I)}-C_{L(I)}-C_{R(I)})}\psi_{\sigma,T,v})=
q^{-\frac{1}{2}(C_{LR(I)}-C_{L(I)}-C_{R(I)})}\pi_{\sigma'}(\psi_{\sigma,T,v})$. By Drinfeld-Kohno theorem, the latter is equal to
$q^{-\frac{1}{2}(C_{LR(I)}-C_{L(I)}-C_{R(I)})}R_{[k,l,m]}\pi_{\sigma}(\psi_{\sigma,T,v})=
\ol{R}_{[k,l,m]}\pi_{\sigma}(\psi_{\sigma,T,v})$.
\end{proof}

\begin{cor} There is a bijection $B_{\ul{\l}}(\ul{z})\to\CB_{\ul{\l}}$ commuting with the action of $PJ_n$.
\end{cor}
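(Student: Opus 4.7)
The plan is to deduce the corollary directly from Theorem~\ref{main-theorem} by specializing to elements of the subgroup $PJ_n\subset J_n$. First I would fix a permutation $\sigma\in S_n$ such that $\ul{z}\in D_\sigma$; if $\ul{z}$ lies on a boundary stratum of $\ol{M_{0,n+1}}(\BR)$, I would pick any adjacent chamber and use Corollary~\ref{cor-covering} together with transport along a path to identify $B_{\ul{\l}}(\ul{z})$ with $B_{\ul{\l}}|_{D_\sigma}$ canonically up to the monodromy ambiguity, which is exactly the $PJ_n$-action we are tracking. The map $\ol{\pi_\sigma}$ from Theorem~\ref{thm-varchenko}, restricted to singular vectors, then furnishes a bijection from $B_{\ul{\l}}(\ul{z})$ onto the set of highest elements of the crystal $\CB_{\l_{\sigma(1)}}\otimes\ldots\otimes\CB_{\l_{\sigma(n)}}$, which we identify set-theoretically with $\CB_{\ul{\l}}$ via the tautological relabeling of tensor factors.

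The crucial observation is that for $g\in PJ_n=\Ker(J_n\to S_n)$ the image $\ol{g}\in S_n$ is trivial, so $D_{\ol{g}\sigma}=D_\sigma$ and $\CB_{\ol{g}\sigma\ul{\l}}=\CB_{\sigma\ul{\l}}$; the diagram of Theorem~\ref{main-theorem} collapses and yields $\ol{\pi_\sigma}\circ g_{an}=g\circ\ol{\pi_\sigma}$ as endomorphisms, which is exactly the asserted $PJ_n$-equivariance of the bijection.

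The only loose end is matching the action of $PJ_n$ on $\CB_{\sigma\ul{\l}}$ implicit in the right-hand column of Theorem~\ref{main-theorem} (built from the unitarized $R$-matrices $\ol{R}_{[k,l,m]}$ assigned to the generators $s_{[k,l,m]}\in J_n$) with the coboundary-category action on $\CB_{\ul{\l}}$ described in section~\ref{sect-kamnitzer}. Both are defined by the very same formulas on the generators $s_{[k,l,m]}$, so the identification is essentially tautological, and I do not expect this step to pose any real obstacle.
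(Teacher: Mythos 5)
Your overall strategy is the intended one: the paper gives no separate argument for this corollary, which is meant to be read off from Theorem~\ref{main-theorem} by specializing to $g\in PJ_n$, exactly as in your second paragraph; and the matching of the unitarized $R$-matrix action with the coboundary action on crystals is indeed supplied by the Kamnitzer--Tingley construction recalled in Section~\ref{sect-kamnitzer}, so your last paragraph raises no real issue.

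The step that genuinely fails is the identification of $\CB_{\sigma\ul{\l}}$ with $\CB_{\ul{\l}}$ ``via the tautological relabeling of tensor factors''. The tensor product of crystals is not symmetric, and the relabeling (flip) map does not send highest elements to highest elements: already for $\fsl_2$ the highest elements of $\CB_{\l_1}\otimes\CB_{\l_2}$ are the pairs $(0,y)$ with $0\le y\le\min(\l_1,\l_2)$, and relabeling turns $(0,y)$ into $(y,0)$, which is not highest for $y>0$. Even on the full tensor product the flip is not the commutor, so it does not intertwine the coboundary $PJ_n$-actions on $\CB_{\sigma\ul{\l}}$ and $\CB_{\ul{\l}}$. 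Consequently, for $\ul{z}$ lying in a chamber $D_\sigma$ with $\sigma\ne e$, the composite map you propose is neither well defined into $\CB_{\ul{\l}}$ nor equivariant. The repair is easy: since all fibers of the covering of Corollary~\ref{cor-covering} are isomorphic as $PJ_n$-sets (changing the base point along a path alters the monodromy action only by an inner automorphism of $PJ_n$), you may assume $\ul{z}\in D_e$ and take the bijection to be $\ol{\pi_e}:B_{\ul{\l}}|_{D_e}\to\CB_{\ul{\l}}$ itself, with equivariance being precisely the $g\in PJ_n$ case of Theorem~\ref{main-theorem}; alternatively, identify $\CB_{\sigma\ul{\l}}$ with $\CB_{\ul{\l}}$ by the crystal action of some $h\in J_n$ lifting $\sigma^{-1}$ and simultaneously adjust the Bethe side by the transport along a path realizing $h$, applying Theorem~\ref{main-theorem} once more so that the two conjugations by $h$ cancel.
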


\section{Piecewise linear transformations.}\label{sect-piecelinear}
We present here a more elementary proof of Conjecture~\ref{conj-etingof} for $\fg=\fsl_2$ using the description of the coboundary category of $\fsl_N$-crystals from \cite{HK2}. More precisely, in \cite{HK2} Henriques and Kamnitzer define (in a purely combinatorial way) some different coboundary category $\rm{HIVES}$ where the associator and commutor are \emph{both nontrivial} and prove that it is equivalent to the category of crystals. The general definition of $\rm{HIVES}$ is complicated, but for $\fsl_2$ it simplifies and gives the following. $\rm{HIVES}$ is a semisimple category whose simple objects $L(\lambda)$ are indexed by nonnegative integers $\lambda\in\BZ_{\ge0}$. The tensor product $L(\l_1)\otimes L(\l_2)$ is the union of $L(\mu)$ where $|\l_1-\l_2|\le\mu\le\l_1+\l_2$ and $\l_1+\l_2-\mu\in2\BZ$. The occurrences of $L(\nu)$ in the triple tensor product $(L(\l_1)\otimes L(\l_2))\otimes L(\l_3)$ are thus indexed by the set $M_{(\l_1\l_2)\l_3}^\nu:=\{\mu\ |\ \max(|\l_1-\l_2|,|\nu-\l_3)\le\mu\le\min(\l_1+\l_2,\nu+\l_3)\}$. The occurrences of $L(\nu)$ in the same triple tensor product with another bracketing $L(\l_1)\otimes (L(\l_2)\otimes L(\l_3))$ are indexed by the set $M_{\l_1(\l_2\l_3)}^\nu:=\{\mu\ |\ \max(|\l_3-\l_2|,|\nu-\l_1|)\le\mu\le\min(\l_3+\l_2,\nu+\l_1)\}$. The associator (associativity morphism) $\psi: L(\l_1)\otimes L(\l_2))\otimes L(\l_3)\to L(\l_1)\otimes (L(\l_2)\otimes L(\l_3)$ is given by the map
$$ \psi: M_{(\l_1\l_2)\l_3}^\nu\to M_{\l_1(\l_2\l_3)}^\nu,\quad \mu\mapsto \max(\l_1+\l_3,\l_2+\nu)-\mu.
$$
The commutor (commutativity morphism) $s:L(\l_1)\otimes L(\l_2)\to L(\l_2)\otimes L(\l_1)$ is given by the identity map on the set of occurrences of each $L(\mu)$ (which is either empty or $1$-element).

\begin{thm}\cite{HK2} The category of $\fsl_2$-crystals is equivalent to $\rm{HIVES}$.
\end{thm}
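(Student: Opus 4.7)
The plan is to construct a monoidal equivalence $F$ sending $\CB_\lambda\mapsto L(\lambda)$ that intertwines the coboundary structures on the two sides. Both categories are semisimple with simples indexed by $\BZ_{\ge0}$, multiplicity-free, and share the Clebsch-Gordan fusion rule, so the underlying semisimple tensor bifunctors are naturally equivalent; the content of the theorem is to verify that the crystal associator (which is the identity on underlying sets of tuples) and the HIVES associator $\psi(\mu)=\max(\lambda_1+\lambda_3,\lambda_2+\nu)-\mu$ correspond under the natural matching of simple occurrences, and likewise for the commutors.

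For the associator, I would identify the occurrences of $\CB_\nu$ in the two bracketings of $\CB_{\lambda_1}\otimes\CB_{\lambda_2}\otimes\CB_{\lambda_3}$ by computing highest weight elements via Kashiwara's tensor product rule. The rule forces any highest weight element to have first coordinate $0$. In the left bracketing, the HWE of weight $\nu$ contained in the $\CB_\mu$-component of $\CB_{\lambda_1}\otimes\CB_{\lambda_2}$ is the explicit triple
$$\bigl(0,\;\tfrac{\lambda_1+\lambda_2-\mu}{2},\;\tfrac{\mu+\lambda_3-\nu}{2}\bigr),$$
so the set of such HWEs is naturally indexed by $\mu\in M^{\nu}_{(\lambda_1\lambda_2)\lambda_3}$. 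For the right bracketing, I determine the intermediate weight $\mu'$ of the $\CB_{\mu'}$-component of $\CB_{\lambda_2}\otimes\CB_{\lambda_3}$ that contains a given pair $(x_2,x_3)$ by iterating $\tilde e$ until reaching the top of the orbit. A direct application of Kashiwara's rule yields the closed-form piecewise-linear formula
$$\mu'=\lambda_2+\lambda_3-2\min(\lambda_2-x_2,\,x_3).$$

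Substituting the HWE triple above into this formula and splitting into the two cases $x_2+x_3\le\lambda_2$ and $x_2+x_3>\lambda_2$, which translate respectively to $\lambda_1+\lambda_3\le\lambda_2+\nu$ and its reverse, I obtain $\mu+\mu'=\lambda_2+\nu$ in the first case and $\mu+\mu'=\lambda_1+\lambda_3$ in the second. Combining both cases produces exactly $\mu+\mu'=\max(\lambda_1+\lambda_3,\lambda_2+\nu)$, which matches the HIVES associator $\psi$ on the nose. Hence the crystal associator (trivial on tuples) corresponds under the identifications above to the HIVES associator.

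The commutor step is essentially formal: the Henriques-Kamnitzer crystal commutor restricts to a crystal isomorphism between the $\CB_\mu$-components of $\CB_{\lambda_1}\otimes\CB_{\lambda_2}$ and of $\CB_{\lambda_2}\otimes\CB_{\lambda_1}$, and since multiplicities are one this isomorphism is the unique such, so under the identification of each occurrence with the single-element set that indexes the corresponding $L(\mu)$-copy in HIVES, it coincides with the identity, which is the HIVES commutor. The main obstacle in the whole argument is the piecewise-linear bookkeeping in obtaining $\mu'$: the iteration of $\tilde e$ through $\CB_{\lambda_2}\otimes\CB_{\lambda_3}$ involves a case split according to the tensor product rule, and one must verify that after propagating through all iterations only the two final cases survive, producing the $\max$ expression cleanly.
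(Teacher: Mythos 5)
Your argument is essentially correct, but be aware that the paper contains no proof of this statement: it is imported verbatim from Henriques--Kamnitzer \cite{HK2}, so there is no internal argument to compare against, and your blind proof is necessarily a different route from the source. In \cite{HK2} the result is proved for all $\fgl_n$-crystals, with the category $\rm{HIVES}$ built from hives and the associator realized by the octahedron recurrence (tied to the Sch\"utzenberger involution); that machinery buys generality. What you do instead is a short, self-contained verification available only for $\fsl_2$, where the hive data degenerate to the intervals $M^\nu$ used in this paper: since both categories are semisimple and multiplicity-free with the same Clebsch--Gordan fusion, the whole content is the comparison of associators and commutors on simples, and I checked your key computations — with the tensor-product convention of this paper, highest weight elements indeed have first coordinate $0$, the weight-$\nu$ highest element in the $\CB_\mu$-component of the left bracketing is $\bigl(0,\tfrac{\lambda_1+\lambda_2-\mu}{2},\tfrac{\mu+\lambda_3-\nu}{2}\bigr)$, the intermediate weight in $\CB_{\lambda_2}\otimes\CB_{\lambda_3}$ is $\mu'=\lambda_2+\lambda_3-2\min(\lambda_2-x_2,x_3)$, and the case split $x_2+x_3\lessgtr\lambda_2$ translates to $\lambda_1+\lambda_3\lessgtr\lambda_2+\nu$, giving $\mu+\mu'=\max(\lambda_1+\lambda_3,\lambda_2+\nu)$, i.e.\ exactly the $\rm{HIVES}$ associator; the commutor comparison is indeed forced by multiplicity one. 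Two small points to make explicit: the first-coordinate-$0$ claim and the formula for $\mu'$ depend on the chosen Kashiwara convention (with the opposite convention the roles of the coordinates flip), and you should add a sentence that checking the structure isomorphisms on simple objects suffices by semisimplicity and additivity, together with the observation that the crystal tensor product is strictly associative on underlying sets so the hexagon for the monoidal functor reduces precisely to your triple computation.
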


Let $v$ be any ordered bracketing of the tensor product of $L(\l_i)$. Then the set $M_{v(\l_1,\ldots,\l_n)}^\nu$ indexing the occurrences of $L(\nu)$ in the tensor product of $L(\l_i)$ according to the ordered bracketing $v$ is the set of integer points of a \emph{convex polytope}. These polytopes are different for different $v$, but always have the same number of integer points. Note that the polytopes depend only on the equivalence class of an ordered bracketing, and hence we can regard these polytopes as attached to the vertices (i.e. $0$-dimensional strata) of $\ol{M_{0,n+1}}(\BR)$. The associators and commutors act by some piecewise linear transformations between the polytopes attached to neighboring vertices of $\ol{M_{0,n+1}}(\BR)$. In particular, for $n=3$, the sets $M_{(\l_1\l_2)\l_3}^\nu$ and $M_{\l_1(\l_2\l_3)}^\nu$ are both segments of the length $\min(\l_3+\l_2,\nu+\l_1)-\max(|\l_3-\l_2|,|\nu-\l_1|)$. The increasing order on the highest weights $\mu$ defines an orientation on these segments. The associator $\psi: M_{(\l_1\l_2)\l_3}^\nu\to M_{\l_1(\l_2\l_3)}^\nu$ reverses the (increasing) order on the set of integer points of the segments. Clearly, $\psi$ is uniquely determined by this property.

Now let us see the same structure from the Gaudin algebras acting on the tensor product $\BV_{\ul{\l}}=V_{\lambda_1}\otimes\ldots\otimes V_{\lambda_n}$. To each ordered bracketing of the tensor product of irreducible finite dimensional $\fsl_2$-modules $V_{\lambda_1}\otimes\ldots\otimes V_{\lambda_n}$ we assign a basis of the space $\BV_{\ul{\l}}^{sing}$ obtained by iterating the decomposition of two irreducible $\fsl_2$-modules according to the bracketing. This basis consists of joint eigenvectors for the operators $C_J\in U(\fsl_2)^{\otimes n}$ where $J\subset [1,n]$ is the set of indices inside a pair of brackets for all pairs of brackets. Note that the $0$-dimensional strata of $\overline{M_{0,n+1}}(\BR)$ correspond to complete ordered bracketings $v$ of the set $\{\l_1,\ldots,\l_n\}$ up to transpositions of factors inside any pair of brackets, and the algebra generated by these Casimirs is the Gaudin algebra corresponding to this stratum. Thus the eigenbasis for the Gaudin algebra $\A(\ul{z}_v)$ is naturally indexed by $M_{v(\l_1,\ldots,\l_n)}^\nu$.

\subsection{Important example.} Let $n=3$, then $\ol{M_{0,4}}(\BR)=\BR\BP^1$. We define the coordinate on $\BR\BP^1$ by $t=\frac{z_1-z_2}{z_1-z_3}$, then the $0$-dimensional strata are the points $0,1,\infty$. Each of these points correspond to some equivalence classes of ordered bracketing of $\l_1\l_2\l_3$ in the following way:
$$
0 \to (\l_1\l_2)\l_3=(\l_2\l_1)\l_3=\l_3(\l_1\l_2)=\l_3(\l_2\l_1);
$$
$$
1 \to \l_1(\l_2\l_3)=\l_1(\l_3\l_2)=(\l_2\l_3)\l_1=(\l_3\l_2)\l_1;
$$
$$
\infty \to (\l_1\l_3)\l_2=(\l_3\l_1)\l_2=\l_2(\l_1\l_3)=\l_2(\l_3\l_1).
$$

The basis of the $\nu$-weight subspace of $\BV_{\ul{\l}}^{sing}$ corresponding to the point $0$ is indexed by the highest weights $\mu$ such that $\max(|\l_1-\l_2|,|\nu-\l_3)\le\mu\le\min(\l_1+\l_2,\nu+\l_3)$ and $\max(|\l_1-\l_2|,|\nu-\l_3)-\mu$ is even. The eigenvalue of $C_{12}$, the only nontrivial generator of the corresponding Gaudin algebra, on such vector is $\frac{\mu(\mu+2)}{2}$. In particular, this eigenvalue is an increasing function of $\mu$. The same is true for other $0$-dimensional strata.

\begin{prop} The transport from the point $0\in \ol{M_{0,4}}(\BR)=\BR\BP^1$ to the point $1\in \ol{M_{0,4}}(\BR)=\BR\BP^1$ along the interval $(0,1)$ acts as $\psi: M_{(\l_1\l_2)\l_3}^\nu\to M_{\l_1(\l_2\l_3)}^\nu$.
\end{prop}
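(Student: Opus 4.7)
The plan is to reduce the proposition to a monotonicity statement for the eigenvalues of a single one-parameter family of Hamiltonians, and then invoke uniqueness of the order-reversing bijection between the two $\mu$-segments.

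First I would introduce the rescaled interpolating Hamiltonian
\[
\tilde H_1(t):=2(z_1-z_2)\,H_1=(C_{12}-C_1-C_2)+t\,(C_{13}-C_1-C_3),\qquad t=\tfrac{z_1-z_2}{z_1-z_3},
\]
which depends only on the cross-ratio $t$ and extends real-analytically to $t\in\BC$ (in particular to $t=0,1$). On the weight subspace $[\BV_{\ul\l}^{sing}]_\nu$ the operators $C_1,C_2,C_3$ and $C_{123}$ act by scalars; together with the relations $\sum_iH_i=0$ and $\sum_iz_iH_i=\tfrac12 C_{123}-\tfrac32(C_1+C_2+C_3)$, this shows that for every $t\in(0,1)$ the element $\tilde H_1(t)$ alone generates $\A(\ul z_t)\big|_{[\BV_{\ul\l}^{sing}]_\nu}$ modulo scalars. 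At $t=0$ it specialises to $C_{12}-c(\l_1)-c(\l_2)$, and at $t=1$, after eliminating $C_{12}+C_{13}=C_{123}-C_{23}-C_1-C_2-C_3$, it specialises to
\[
\tilde H_1(1)=c(\nu)-C_{23}-3c(\l_1)-2c(\l_2)-2c(\l_3),
\]
which are, up to scalars, precisely the generators $C_{12}$ and $C_{23}$ of the boundary Gaudin algebras identified in the discussion preceding the proposition.

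I would then read off the spectra at the endpoints. At $t=0$ the eigenvalues on the basis indexed by $\mu\in M^\nu_{(\l_1\l_2)\l_3}$ are $c(\mu)-c(\l_1)-c(\l_2)$, strictly \emph{increasing} in $\mu$. At $t=1$ the eigenvalues on the basis indexed by $\mu'\in M^\nu_{\l_1(\l_2\l_3)}$ are an affine function of $-c(\mu')$, hence strictly \emph{decreasing} in $\mu'$. By the simple-spectrum part of Theorem~\ref{thm-cyclic} (applied at every real point of $[0,1]$, including the endpoints via Corollary~\ref{cor-covering}), the Hermitian operator $\tilde H_1(t)$ has simple spectrum on $[\BV_{\ul\l}^{sing}]_\nu$ for all $t\in[0,1]$. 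Its eigenvalues are therefore real-analytic functions of $t$ that never collide, so the linear ordering of eigenvalues is preserved along the transport. Combined with the endpoint computation, the transport carries the $k$-th smallest $\mu$ at $t=0$ to the $k$-th largest $\mu'$ at $t=1$.

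Since $M^\nu_{(\l_1\l_2)\l_3}$ and $M^\nu_{\l_1(\l_2\l_3)}$ are arithmetic progressions of step $2$ of the same cardinality, there is a unique order-reversing bijection between them, and a direct check of the formula $\psi(\mu)=\max(\l_1+\l_3,\l_2+\nu)-\mu$ confirms that $\psi$ is that bijection. The one place where I expect to spend real care is verifying that $\tilde H_1(t)$ still generates $\A(\ul z_t)\big|_{[\BV_{\ul\l}^{sing}]_\nu}$ modulo scalars \emph{uniformly} through the boundary points $t=0,1$, i.e.\ that the identification of the eigenvectors of $\tilde H_1(0)$ (respectively $\tilde H_1(1)$) with the basis indexed by $M^\nu_{(\l_1\l_2)\l_3}$ (respectively $M^\nu_{\l_1(\l_2\l_3)}$) really comes from the operadic degeneration described in Theorem~\ref{thm-closure}(3). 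Once this matching of labelings is in place, the monotonicity argument above closes the proof.
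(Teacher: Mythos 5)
Your proposal is essentially the paper's own proof: the paper transports the eigenbasis of $H(t)=(1-t)C_{12}-tC_{23}$ (which agrees with your $\tilde H_1(t)$ up to a $t$-dependent scalar on the $\nu$-weight singular subspace) along $[0,1]$, uses simplicity of the real spectrum to conclude that the eigenvalue ordering cannot change, and identifies the resulting order-reversing bijection with $\psi$ via its characterization as the unique order-reversing map between the two segments. The only blemishes are harmless constant slips in your auxiliary identities --- the correct relations are $C_{12}+C_{13}+C_{23}=C_{123}+C_1+C_2+C_3$, hence $\tilde H_1(1)=c(\nu)-c(\l_1)-C_{23}$ on $[\BV_{\ul\l}^{sing}]_\nu$, and $\sum_i z_iH_i=\tfrac12\bigl(C_{123}-C_1-C_2-C_3\bigr)$ --- which differ from yours only by scalars on the fixed weight space and so do not affect the monotonicity argument.
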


\begin{proof} The Bethe basis at each point $t\in[0,1]$ is the eigenbasis for the operator $H(t):=(1-t)C_{12}-tC_{23}$. For any $t$ this operator has pairwise distinct real eigenvalues on $\BV_{\ul{\l}}^{sing}$, hence the basis is determined by $H(t)$, and moving $t$ along the segment $[0,1]$ preserves the order of the eigenvalues of $H(t)$. We have $H(0)=C_{12}$ and $H(1)=-C_{23}$. Hence the transport along the segment takes the spectrum of $C_{12}$ in the \emph{increasing order} to the spectrum of $C_{23}$ in the \emph{decreasing order}. Thus the transport along $[0,1]$ acts as the associator in the category of hives.
\end{proof}

\begin{cor} The transports along $1$-dimensional strata of $\overline{M_{0,n+1}}(\BR)$ act as associators from \cite{HK2}.
\end{cor}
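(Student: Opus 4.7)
The plan is to reduce the general case to the $n=3$ case treated in the preceding Proposition by exploiting the operad structure established in Theorem~\ref{thm-closure}(3). A $1$-dimensional stratum of $\ol{M_{0,n+1}}(\BR)$ connects two $0$-dimensional strata $\ul{z}_T$ and $\ul{z}_{T'}$ whose binary rooted trees $T,T'$ differ by a single elementary reassociation at one inner vertex $I$, while all other inner vertices carry the same subtrees. By the definition of the operad structure and Proposition~\ref{prop-strata-in-coordinates}, this $1$-dimensional stratum is, up to automorphisms, the image under a composition of substitution maps $\gamma_{k;M_1,\ldots,M_k}$ of an open $1$-dimensional stratum in the factor $\ol{M_{0,4}}$ attached to $I$, with all other factors sitting at their respective $0$-dimensional strata.

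First I would describe the Gaudin algebra along such a stratum using Theorem~\ref{thm-closure}(3): it is the image under $\gamma_{k;M_1,\ldots,M_k}$ of the triple $(\A(\ul{w});\A(\ul{u_1}),\ldots,\A(\ul{u_k}))$ in which all $\A(\ul{u_i})$ are fixed and $\A(\ul{w})$ varies along a $1$-dimensional stratum of $\ol{M_{0,4}}(\BR)$. Decomposing $\BV_{\ul{\l}}=\bigoplus W_{(\nu)}\otimes V_{(\nu)}$ with respect to the group $D_{M_1,\ldots,M_k}(\fsl_2^{\oplus k})$ as in the proof of Theorem~\ref{thm-cyclic}, the generators coming from the $I_{M_i}(\A(\ul{u_i}))$ act only on the multiplicity spaces $W_{(\nu)}$ and are constant along the stratum, while the generators from $D_{M_1,\ldots,M_k}(\A(\ul{w}))$ act only on the $\fsl_2^{\oplus k}$-singular vectors in $V_{(\nu)}$ and encode the $n=3$ problem in three irreducibles whose highest weights are determined by the local tree at $I$.

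Next I would identify the Bethe basis at either endpoint with the set of integer points of the corresponding hive polytope: by induction on the depth of $T$, an eigenvector is uniquely labelled by the tuple of highest weights $\mu_J$ attached to all inner vertices $J$ of $T$ (the joint eigenvalues of the $C_{LR(J)}$), which is exactly an integer point of the polytope $M^\nu_{v(T)(\l_1,\ldots,\l_n)}$. Along the $1$-dimensional stratum only the label $\mu_I$ at the vertex $I$ changes; all other labels $\mu_J$ (for $J\neq I$) remain constant because the corresponding Casimirs act diagonally via $I_{M_i}$ and are preserved by the operadic factorization. Thus, on each isotypic component indexed by the remaining labels, the transport reduces to the $n=3$ transport at the local vertex $I$, and the preceding Proposition identifies this transport with the associator $\psi$ of \cite{HK2} applied locally at $I$.

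Finally I would conclude by observing that in the hive description of the category of $\fsl_2$-crystals, the associator attached to an elementary reassociation of a complete bracketing acts precisely by applying $\psi$ to the label $\mu_I$ at the vertex $I$ being rebracketed while keeping all other labels fixed; this is a direct consequence of the monoidal structure on $\rm{HIVES}$ and induction on $n$. Matching this with the previous paragraph gives the claim. I expect the main technical point to be a clean verification that the Bethe eigenbasis is compatible with the operadic factorization of $\A(\ul{z})$ and with the isotypic decomposition under $D_{M_1,\ldots,M_k}(\fsl_2^{\oplus k})$; once this is in place the reduction to $n=3$ and the application of the preceding Proposition are automatic.
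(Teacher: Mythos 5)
Your proposal is correct and is essentially the argument the paper intends: the paper leaves the corollary implicit, deducing it from the $n=3$ Proposition together with the operadic factorization of the Gaudin algebras (Theorem~\ref{thm-closure}(3)), which is exactly the reduction you spell out. The only cosmetic imprecision is that the $4$-valent vertex $I$ need not be the root, so the varying $\ol{M_{0,4}}$ factor may sit inside an iterated substitution rather than in the outer $\gamma_{k;M_1,\ldots,M_k}$, but your argument goes through verbatim after decomposing with respect to all the Casimirs that stay constant along the stratum.
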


\begin{cor} The Conjecture~\ref{conj-etingof} is valid for $\fsl_2$.
\end{cor}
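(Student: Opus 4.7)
The plan is to combine the preceding Corollary (transports along 1-dimensional strata realize $\rm{HIVES}$ associators) with the Henriques--Kamnitzer equivalence $\rm{HIVES}\simeq\fsl_2\text{-crystals}$ to produce a $PJ_n$-equivariant bijection. The argument proceeds in two clean steps: identify the fiber $B_{\ul{\l}}(\ul{z}_v)$ over a vertex with the $\rm{HIVES}$ index set for the corresponding bracketing, and then transport the $PJ_n$-action from loops in $\overline{M_{0,n+1}}(\BR)$ to the combinatorial action on those index sets.

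For the first step I would fix a $0$-dimensional stratum $\ul{z}_v$ of $\overline{M_{0,n+1}}(\BR)$ as basepoint. By Theorem~\ref{thm-closure}(3), the limiting algebra $\A(\ul{z}_v)$ is the iterated operadic substitution of Gaudin algebras according to the complete bracketing $v$, and its essential generators (modulo the center) are the nested Casimirs $C_J$ attached to each pair of brackets of $v$. By Theorem~\ref{thm-cyclic} they act with simple joint spectrum on $\BV_{\ul{\l}}^{sing}$. Reading off the joint eigenvalue $(c(\mu_J))_J$ on each Bethe vector records a tuple of highest weights satisfying the Clebsch--Gordan constraints at every internal node of the bracketing tree; this yields a canonical bijection $B_{\ul{\l}}(\ul{z}_v)\iso M^0_{v(\l_1,\ldots,\l_n)}$ with the $\rm{HIVES}$ index set for the trivial summand of $L(\l_{v(1)})\otimes\ldots\otimes L(\l_{v(n)})$ bracketed by $v$.

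For the second step, $PJ_n$ is generated by based loops that decompose into elementary transports across codimension-one walls of the cell complex $\widetilde{M_{0,n+1}}(\BR)$. By the preceding Corollary each such elementary transport acts, under the above identification, as the corresponding associator of $\rm{HIVES}$; and since the $\rm{HIVES}$ commutor for $\fsl_2$ is the identity on multiplicity labels, no extra contribution arises from permutations of factors. Hence the monodromy action of $PJ_n$ on $B_{\ul{\l}}(\ul{z}_v)$ coincides with the coboundary-category action of $PJ_n$ on $M^0_{v(\l_1,\ldots,\l_n)}$ in $\rm{HIVES}$, which under the equivalence $\rm{HIVES}\simeq\fsl_2\text{-crystals}$ is the $PJ_n$-action on the set $\CB_{\ul{\l}}$ of highest elements of $\CB_{\l_1}\otimes\ldots\otimes\CB_{\l_n}$.

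The main obstacle is pinning down the first step with orientations that match the preceding Corollary: one must verify that the canonical labeling of $B_{\ul{\l}}(\ul{z}_v)$ by $M^0_{v(\l_1,\ldots,\l_n)}$ identifies the $\rm{HIVES}$ associator (a piecewise-linear reversal on each fiber of the weight projection) with the transport monodromy computed in the Proposition. This reduces to the monotonicity $c(\mu)=\mu(\mu+2)/2$ of the Casimir eigenvalue in $\mu$ and the operadic compatibility from Theorem~\ref{thm-closure}(3), which together ensure that the order-reversing property established for $n=3$ propagates along every elementary transport in the general case.
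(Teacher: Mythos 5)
Your proposal is correct and follows essentially the same route as the paper's own (combinatorial) argument in this section: you index the Bethe eigenbasis at a $0$-dimensional stratum by the $\rm{HIVES}$ multiplicity set via the joint Casimir spectrum, invoke the preceding Corollary to identify elementary transports with $\rm{HIVES}$ associators (with the $\fsl_2$ commutor trivial), and conclude via the Henriques--Kamnitzer equivalence of $\rm{HIVES}$ with $\fsl_2$-crystals. The orientation/monotonicity point you isolate at the end is exactly the content of the paper's $n=3$ computation with $H(t)=(1-t)C_{12}-tC_{23}$, so nothing essential is missing.
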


\section{Discussion.}\label{sect-conjectures}

\subsection{Bethe Ansatz conjecture.} The following statement is a variant of Bethe Ansatz conjecture.

\begin{conj} Theorem~\ref{thm-cyclic} holds for arbitrary $\fg$.
\end{conj}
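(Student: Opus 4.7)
The plan is to reuse, for arbitrary $\fg$, the operad induction on $n$ from the proof of Theorem~\ref{thm-cyclic}, so as to reduce the conjecture to a single genuinely hard statement: cyclicity at a generic interior point. The inductive step nowhere uses $\fg=\fsl_N$. Indeed, whenever $\ul{z}=\gamma_{k;M_1,\ldots,M_k}(\ul{w};\ul{u_1},\ldots,\ul{u_k})$ lies on the boundary divisor of $\overline{M_{0,n+1}}$ with $k\ge 2$ and all $m_i<n$, Theorem~\ref{thm-closure}(3) says that $\A(\ul{z})$ is generated by $\bigotimes_i I_{M_i}\A(\ul{u_i})$ together with $D_{M_1,\ldots,M_k}\A(\ul{w})$. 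Decomposing $\BV_{\ul{\l}}$ into $\fg^{\oplus k}$-isotypic components $W_{(\nu)}\otimes V_{(\nu)}$ and applying the induction hypothesis both to each $W_{(\nu)}$ (as a module over the first factor) and to the singular vectors of each $V_{(\nu)}$ (as a module over the second) immediately yields cyclicity of $\BV_{\ul{\l}}^{sing}$ over $\A(\ul{z})$. The second statement of Theorem~\ref{thm-cyclic}, simple spectrum for real $\ul{z}$, then follows as in the paper: Hermiticity of the generators forces diagonalizability, and a cyclic module over a commutative algebra acting semisimply has simple joint spectrum.

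Hence the conjecture reduces to the base of the induction: cyclicity of $\BV_{\ul{\l}}^{sing}$ as an $\A(\ul{z})$-module for $\ul{z}$ in the open stratum $M_{0,n+1}\subset\overline{M_{0,n+1}}$, for arbitrary semisimple $\fg$. This is a form of the classical Bethe Ansatz completeness conjecture for the Gaudin model at higher rank, and is the main obstacle.

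For this interior case, the natural approach is through the Feigin-Frenkel isomorphism identifying $\A(\ul{z})$ with the algebra of functions on the space of Langlands dual opers on $\BP^1$ with regular singularities at $z_1,\ldots,z_n,\infty$ and residues determined by $\ul{\l}$. Cyclicity then becomes the statement that the natural map from joint $\A(\ul{z})$-eigenlines in $\BV_{\ul{\l}}^{sing}$ to the set of such opers is a bijection. For $\fsl_N$ this is the MTV parametrization via Wronskians of tuples of polynomials; beyond type $A$, the candidate routes include the Miura-oper/Wakimoto realization of eigenvectors due to Frenkel-Mukhin, $q$-character and transfer-matrix methods in the spirit of Mukhin-Varchenko, and reduction to type $A$ via a suitable embedding $\fg\hookrightarrow\fsl_N$ with branching. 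To my knowledge the statement is open for exceptional $\fg$; any proof of it for generic $\ul{z}$ would, via the reduction above, immediately yield the full conjecture.
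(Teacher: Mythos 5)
You are being asked to prove what the paper itself leaves as a conjecture: the paper gives no proof, and your proposal does not close it either. Your inductive step is correct, but it is word for word the paper's own proof of Theorem~\ref{thm-cyclic}: the operadic degeneration via Theorem~\ref{thm-closure}(3), the decomposition $\BV_{\ul{\l}}=\bigoplus W_{(\nu)}\otimes V_{(\nu)}$ into $\fg^{\oplus k}$-isotypic components, and the Hermiticity-plus-cyclicity argument for simple spectrum at real $\ul{z}$ are all already type-independent in the paper. The only place where $\fg=\fsl_N$ enters is the external input of Mukhin--Tarasov--Varchenko, namely cyclicity of $\BV_{\ul{\l}}^{sing}$ over $\A(\ul{z})$ for $\ul{z}$ in the open stratum $M_{0,n+1}$. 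Your proposal reduces the conjecture to exactly that statement for arbitrary $\fg$ and then stops.

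That reduction is therefore not progress beyond what the paper contains, and the genuine gap is the base case of your induction: completeness/cyclicity of the Bethe algebra at generic $\ul{z}$ outside type $A$. None of the routes you list is carried out, and none is known to deliver the result: the MTV argument rests on Schubert calculus and the Wronski map, which are specific to $\fsl_N$; the Feigin--Frenkel oper description identifies the spectrum only ``modulo the Bethe Ansatz conjecture,'' i.e. it reformulates rather than proves cyclicity; and restricting along an embedding $\fg\hookrightarrow\fsl_N$ does not help, since the Gaudin algebra of $\fsl_N$ does not map onto (or even into) that of $\fg$ under branching, so a cyclic vector for the larger algebra gives no cyclic vector for the smaller one. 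As it stands, your text is a correct observation that the hard content of the conjecture is concentrated in the generic-point statement, together with a survey of open approaches — not a proof.
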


In particular, this means that the spectrum of the Gaudin algebra $\A(\ul{z})$ is simple for any $\ul{z}\in\ol{M_{0,n+1}}(\BR)$. Then for any collection of highest weights $\l_1,\ldots,\l_n$ we have a finite covering of the Deligne-Mumford moduli space $\ol{M_{0,n+1}}(\BR)$ whose fiber is the spectrum of the corresponding commutative algebra in space of highest vectors of the tensor product of irreducible $\fg$-modules $V_{\l_1},\ldots ,V_{\l_n}$.

\subsection{Opers and crystals.}

According to Feigin and Frenkel, the spectrum of the Gaudin model is (modulo Bethe Ansatz conjecture) in 1-1 correspondence with the set of monodromy-free ${}^L G$-opers on $\BP^1$ with regular singularities of type $\l_i$ at the marked points $z_i$ and a regular singularity at $\infty$. One can define a monodromy-free oper on a nodal curve as a collection of monodromy-free opers on on each component with regular singularities an the marked points and at the nodes such that for any pair of intersecting components the corresponding opers have the same type of singularity at the intersection point. Generalizing the second proof of Theorem~\ref{main-theorem}, one can define a coboundary monoidal category $\rm{OPERS}$ whose simple objects $L_\lambda$ are indexed by the set of dominant integral weights $\lambda$ of $\fg$, and the tensor product is defined by the following rule: $L_\l\otimes L_\mu=\bigoplus\limits_\nu M_{\l,\mu,\nu}\times L_{\nu^*}$ where $M_{\l,\mu,\nu}$ is the set of monodromy-free ${}^L G$-opers on $\BC\BP^1$ having regular singularities with the residues $\l,\mu,\nu$ at the points $0,1,\infty$, respectively, and regular at other points (for a dominant integral weight $\nu\in X$ define the \emph{dual} weight $\nu^*\in X$ by the property $V_\nu^*=V_{\nu^*}$). The set $\bigcup\limits_{\nu}M_{\l,\mu,\nu}\times M_{\nu,\delta,\epsilon}$ can be regarded as the space of monodromy-free opers on the degenerate stable rational curve with $4$ marked points with regular singularities of residues $\l,\mu,\delta,\epsilon$ at the marked points. One can define a transport of the set of opers along the shortest path in $\ol{M_{0,4}}(\BR)$ connecting two degenerate curves:
$$\psi: \bigcup\limits_{\nu}M_{\l,\mu,\nu}\times M_{\nu,\delta,\epsilon}\to \bigcup\limits_{\nu}M_{\l,\nu,\epsilon}\times M_{\mu,\delta,\nu}.$$

We can also define a bijection
$$ s: M_{\l,\mu,\nu}\to M_{\mu,\l,\nu}
$$
as the map of the set of opers induced by the holomorphic automorphism $z\mapsto (1-z)$ of $\BC\BP^1$.

\begin{conj} The above category with the maps $\psi$ as the associator morphisms and $s:M_{\l,\mu,\nu}\to M_{\mu,\l,\nu}$ as the commutor morphisms is a coboundary monoidal category. Moreover, it is equivalent to the category of $\fg$-crystals.
\end{conj}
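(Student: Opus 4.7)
The plan is to adapt the strategy used for $\fsl_2$ in Section~\ref{sect-piecelinear}, combining the Feigin--Frenkel parameterization of the spectrum of $\A(\ul{z})$ by monodromy-free ${}^L G$-opers with the operadic behavior of the family $\A(\ul{z})$ on $\ol{M_{0,n+1}}$ established in \thmref{thm-closure}. Assuming the Bethe Ansatz conjecture for general $\fg$, the set $M_{\l,\mu,\nu}$ is in natural bijection with the set of joint eigenlines of $\A(\ul{z}_0)$ in $[V_\l\otimes V_\mu\otimes V_{\nu^*}]^\fg$ for $\ul{z}_0$ a $0$-dimensional stratum of $\ol{M_{0,4}}(\BR)=\BR\BP^1$, and iterated tensor products for any ordered bracketing $v$ on $n$ factors are indexed by the spectrum at the corresponding vertex of $\ol{M_{0,n+1}}(\BR)$. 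Under this identification, the associator $\psi$ becomes the transport of eigenlines along the unique interval in $\ol{M_{0,4}}(\BR)$ connecting the two degenerate rational curves $(\l_1\l_2)\l_3$ and $\l_1(\l_2\l_3)$, and the commutor $s$ becomes the deck transformation induced by the $\BZ/2$-involution of $\BC\BP^1$ exchanging $\l_1$ and $\l_2$.

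First I would verify the coboundary axioms. The relation $s_{X,Y}\circ s_{Y,X}=\Id$ is immediate since $z\mapsto 1-z$ composed with itself is the identity. The pentagon axiom for $\psi$ reduces to commutativity of a loop in $\ol{M_{0,5}}(\BR)$ bounding the pentagonal $2$-face of Stasheff's associahedron, which is contractible there, so the transport of eigenlines around this loop is trivial by Theorem~\ref{thm-closure} together with the simple-spectrum property on real strata (Theorem~\ref{thm-cyclic}). Likewise the cactus relation $s_{Y\otimes X,Z}\circ (s_{X,Y}\otimes 1)\circ(1\otimes s_{Y,Z})=s_{X,Z\otimes Y}$ reduces to a contractible loop in $\ol{M_{0,5}}(\BR)$ bounding one of the hexagonal $2$-faces. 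This part of the argument is essentially topological and formally parallels the derivation of the $J_n$-action from the cell structure of $\wt{M_{0,n+1}}(\BR)$ in \secref{sect-deligne-mumford}.

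Next I would establish the equivalence with $\fg$-crystals. Since both categories are semisimple with simple objects indexed by dominant integral weights, it suffices to produce, for each triple $(\l,\mu,\nu)$, a bijection between $M_{\l,\mu,\nu}$ and the set $(\CB_\l\otimes\CB_\mu)^\nu$ of highest weight $\nu$ elements of the crystal tensor product, compatible with the associator and commutor. On the crystal side, the action of $PJ_n$ arising from the coboundary structure on $\fg$-crystals is the one produced by iterated unitarized $R$-matrices (\secref{sect-kamnitzer}); on the opers side, the action is the monodromy of eigenlines of $\A(\ul{z})$ on $\ol{M_{0,n+1}}(\BR)$ (Corollary~\ref{cor-covering}). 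Conjecture~\ref{conj-etingof} asserts exactly that these two $PJ_n$-actions are intertwined, so granting that conjecture, the identification extends from triple tensor products to $n$-fold tensor products and respects all structure morphisms. Compatibility with the $S_n$-action (i.e.\ the lifting from $PJ_n$ to $J_n$) is handled by the $s$-commutor, which by construction transports eigenlines across the corresponding codimension-one wall.

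The principal obstacle is that two deep unproven statements feed into this plan: the Bethe Ansatz conjecture for arbitrary $\fg$ (needed to identify $M_{\l,\mu,\nu}$ with a spectrum of $\A(\ul{z})$ in the first place) and Conjecture~\ref{conj-etingof} for arbitrary $\fg$ (needed for the comparison with the crystal coboundary structure). In the $\fsl_2$ case both are effectively trivial because of the elementary combinatorics of HIVES, but for general $\fg$ no substitute for HIVES is presently known. A secondary difficulty is to check that the opers commutor $s:M_{\l,\mu,\nu}\to M_{\mu,\l,\nu}$, defined by the holomorphic involution $z\mapsto 1-z$, really corresponds under Feigin--Frenkel to the unitarized $R$-matrix and not merely to the naive $R$-matrix — this requires a careful analysis of the scalar factors $q^{(C_{LR(I)}-C_{L(I)}-C_{R(I)})/2}$ as in the proof of Theorem~\ref{main-theorem}, now applied to the oper side rather than to asymptotic KZ solutions.
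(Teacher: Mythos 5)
The statement you were asked to prove is stated in the paper only as a conjecture: the paper offers no proof of it for general $\fg$, and the only thing actually established is the $\fsl_2$ analogue, proved in \secref{sect-piecelinear} via the combinatorial HIVES model of \cite{HK2}. So your argument has to stand on its own, and as written it does not: it is explicitly conditional on two open statements, namely the Bethe Ansatz conjecture for arbitrary $\fg$ (Theorem~\ref{thm-cyclic} is proved only for $\fsl_N$, so for general $\fg$ you do not even know that the eigenlines form an unbranched covering of $\ol{M_{0,n+1}}(\BR)$ as in Corollary~\ref{cor-covering}) and on Conjecture~\ref{conj-etingof} for arbitrary $\fg$, which is the central open conjecture of the whole paper. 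Invoking Conjecture~\ref{conj-etingof} to get the equivalence with crystals inverts the intended logic: the OPERS category is proposed precisely as a tool for attacking Conjecture~\ref{conj-etingof} beyond $\fsl_2$, so a ``proof'' that assumes it is not a proof in any useful sense.

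Even granting both conjectures, several steps in your outline are genuine gaps rather than routine verifications. First, you silently identify the fibers of the Bethe covering at boundary (nodal-curve) points with sets of monodromy-free opers on nodal curves; Theorem~\ref{thm-closure} controls the limiting \emph{subalgebras} via the operad structure, but the compatibility of the Feigin--Frenkel oper description of the spectrum with these limit algebras is not established anywhere, and without it the map $\psi$ defined by transporting \emph{opers} need not agree with the transport of \emph{eigenlines} that your pentagon and hexagon monodromy arguments actually control. Second, your monodromy-triviality argument over the $2$-faces of $\ol{M_{0,5}}(\BR)$ again requires the covering to be defined and unbranched over all real points, i.e.\ the very simple-spectrum statement that is conjectural for general $\fg$. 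Third, the matching of the involution $z\mapsto 1-z$ on opers with the \emph{unitarized} commutor (rather than the naive $R$-matrix) is exactly the delicate point; for $\fsl_2$ this is handled in Theorem~\ref{main-theorem} only because Varchenko's Theorem~\ref{thm-varchenko} supplies asymptotic solutions whose leading terms are crystal bases, and no analogue of that input exists for general $\fg$, nor do you propose one. You have correctly isolated where the difficulties lie, but isolating them is all the proposal does; the statement remains open.
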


\begin{cor} In particular, for $\fg=\fsl_N$, we get a bijection between Bethe vectors in the space of invariants in the triple tensor product and the corresponding set of hives.
\end{cor}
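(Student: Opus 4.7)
The plan is to combine three ingredients: the conjectural equivalence $\rm{OPERS}\simeq\fg\text{-crystals}$ stated just above; the Feigin-Frenkel identification of the spectrum of the Gaudin algebra with monodromy-free ${}^LG$-opers; and the Henriques-Kamnitzer hive model from \cite{HK2}, which for $\fg=\fsl_N$ realizes tensor multiplicity sets as integer points of Knutson-Tao hive polytopes with prescribed boundary.

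First I would set up the correspondence at the level of sets. Fix a triple of dominant integral weights $\ul{\l}=(\l_1,\l_2,\l_3)$ with $\l_1+\l_2+\l_3$ in the root lattice, and fix $\ul{z}\in M_{0,4}(\BR)$. By Theorem~\ref{thm-cyclic}, $\A(\ul{z})$ acts on $\BV_{\ul{\l}}^{sing}$ with simple spectrum and cyclic vector; restricting to the zero-weight component yields the same property on the invariant subspace $(\BV_{\ul{\l}})^{\fsl_N}$. Hence the set of Bethe vectors in this space is canonically identified with the joint spectrum of $\A(\ul{z})$, and, by Feigin-Frenkel, with the set of monodromy-free $PGL_N$-opers on $\BP^1$ having regular singularities of residues $\l_1,\l_2,\l_3$ at $z_1,z_2,z_3$ --- precisely the set $M_{\l_1,\l_2,\l_3}$ entering the definition of $\rm{OPERS}$.

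Next, the preceding conjecture translates this oper multiplicity into the corresponding multiplicity of simple objects in the $\fsl_N$-crystal category, namely the multiplicity of $L_{\l_3^*}$ in $L_{\l_1}\otimes L_{\l_2}$. Applying the Henriques-Kamnitzer equivalence with $\rm{HIVES}$ (in the $\fsl_N$ generalization) identifies this multiplicity set with the set of $\BZ$-valued hives of boundary type $(\l_1,\l_2,\l_3)$. The composition
$$
B_{\ul{\l}}(\ul{z})\;\leftrightarrow\;M_{\l_1,\l_2,\l_3}\;\leftrightarrow\;\{\text{hives of boundary }(\l_1,\l_2,\l_3)\}
$$
is the desired bijection.

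The main obstacle is of course the preceding conjecture itself, which for $\fsl_N$ with $N\ge 3$ is open; assuming it, the rest of the argument is essentially bookkeeping. The only genuine subtlety is matching conventions: one must check that the duality $\nu\mapsto\nu^*$ built into the decomposition rule $L_\l\otimes L_\mu=\bigoplus_\nu M_{\l,\mu,\nu}\times L_{\nu^*}$ is compatible with the identification of $\fsl_N$-triple invariants with the Littlewood-Richardson multiplicity $\dim\Hom_\fg(V_{\l_3^*},V_{\l_1}\otimes V_{\l_2})$ on the hive side, and that the operadic gluing of Theorem~\ref{thm-closure} correctly relates the four-point picture used to define transports in $\overline{M_{0,4}}(\BR)$ to the three-point invariant count. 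Once these bookkeeping identifications are in place, the corollary follows by a direct composition of the three canonical bijections above.
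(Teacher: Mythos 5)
Your argument is exactly the one the paper intends: the corollary is stated as a conditional consequence of the preceding conjecture (equivalence of $\rm{OPERS}$ with $\fg$-crystals), combined with the Feigin--Frenkel identification of the Gaudin spectrum with monodromy-free opers and the Henriques--Kamnitzer $\rm{HIVES}$ realization, so your chain $B_{\ul{\l}}(\ul{z})\leftrightarrow M_{\l_1,\l_2,\l_3}\leftrightarrow\{\text{hives}\}$ matches the paper's (implicit) derivation. The only differences are bookkeeping points (placing the three marked points at $0,1,\infty$ and the duality $\nu\mapsto\nu^*$), which you correctly flag as routine.
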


\subsection{Shift of argument subalgebras.} There should be an analog of Theorem~\ref{main-theorem} for shift-of-argument subalgebra for arbitrary $\fg$, cf. \cite{Ryb1,FFR10}. Namely, there is a family of maximal commutative subalgebras $\A_\mu\subset U(\fg)$ parameterized by regular elements $\mu\in\fh$ (in fact, the subalgebra $\A_\mu$ does not change under the dilations of $\fh$). The space $\BP(\fh_{reg})$ parameterizing the family $\A_\mu$ is noncompact. On the other hand, each subalgebra from this family is a point in the appropriate Grassmannian which is compact. Hence there is a natural compactification of $\BP(\fh_{reg})$ parameterizing some commutative subalgebras of $U(\fg)$ which have the same Poincar\'e series as $\A_\mu$.

\begin{conj} This compactification is the De Concini-Procesi wonderful compactification for the root hyperplane arrangement, see \cite{DCP}.
\end{conj}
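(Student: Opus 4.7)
The plan is to mimic the strategy of \thmref{thm-closure}, replacing the Gaudin family $\A(\ul{z})$ parametrized by $M_{0,n+1}$ by the shift-of-argument family $\A_\mu\subset U(\fg)$ parametrized by $\BP(\fh_{reg})$, and the Deligne-Mumford compactification by the De Concini-Procesi wonderful compactification $\overline{\BP(\fh)}$ of the root hyperplane arrangement. First I would repeat the construction of \secref{sect-bethe-alg}, forming for each $N$ the Zariski closure of the image of $\BP(\fh_{reg})$ in $\prod_{M\le N}{\rm Gr}(d(M),{\rm PBW}^{(M)}U(\fg))$ and taking the inverse limit to obtain a pro-algebraic scheme $Z$ carrying a tautological sheaf $\A$ of commutative subalgebras whose restriction to $\BP(\fh_{reg})\subset Z$ is $\A_\mu$. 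The task is then to identify $Z$ with $\overline{\BP(\fh)}$ and to check that each boundary fiber has the correct Poincar\'e series.

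The heart of the argument should be an operad-like substitution structure on shift-of-argument subalgebras, parallel to \propref{prop-substitution}, indexed by flags of flats of the root arrangement. A flat corresponds to a Levi subalgebra $\mathfrak{l}\subset\fg$, and given a nested chain $\fg=\mathfrak{l}_0\supsetneq\mathfrak{l}_1\supsetneq\ldots\supsetneq\mathfrak{l}_k$ together with generic shift parameters $\mu_i$ transverse to $\mathfrak{l}_{i+1}$ inside $\mathfrak{l}_i$, one expects a substitution map assembling shift-of-argument subalgebras $\A^{\mathfrak{l}_i}_{\mu_i}\subset U(\mathfrak{l}_i)$ into a commutative subalgebra of $U(\fg)$ of the same Poincar\'e series as $\A_\mu$. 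Classically this says that the Mishchenko-Fomenko family on $\fg^*$ degenerates along each Levi wall to a product of such families on the Levi; quantizing this via the Feigin-Frenkel-type construction of $\A_\mu$ from \cite{Ryb1} should furnish the required substitution, provided one verifies the analogue of \lemref{lem-factorization}, namely a freeness statement over the image of $ZU(\mathfrak{l}_i)$ that should follow from Kostant's and Knop's theorems applied to Levi subgroups.

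Third, I would exploit the De Concini-Procesi charts on $\overline{\BP(\fh)}$. By \cite{DCP} these are indexed by maximal nested sets of flats and carry local coordinates $u_L$ analogous to the coordinates $u_I$ of \thmref{thm-closure}; a stratum where some $u_L=0$ corresponds to a partial collapse of the flag of Levi subalgebras, with an analogue of \propref{prop-strata-in-coordinates} describing incidences. I would then produce, for each chart, a system of generators of $\A_\mu$ extending regularly to the chart, and compute their specializations along the boundary via the substitution structure of the previous step. This would identify each limit subalgebra with an algebra built from shift-of-argument subalgebras of the Levis in the flag, giving a morphism $\overline{\BP(\fh)}\to Z$ which is an isomorphism by matching Poincar\'e series together with the local algebraic independence.

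The main obstacle is the second step: constructing the quantum substitution for shift-of-argument subalgebras in explicit enough form to control their behaviour at each Levi wall, and verifying the required freeness over $ZU(\mathfrak{l})$. The classical degeneration of Mishchenko-Fomenko families along Levi walls is standard, but its quantum lift demands a careful analysis of the generators produced by the Feigin-Frenkel quantization of \cite{Ryb1}, in a spirit similar to \lemref{lem-predel}. A secondary combinatorial ingredient is the identification of De Concini-Procesi nested sets with flags of Levi subalgebras; this is routine but must be set up explicitly before the proof of \thmref{thm-closure} can be transported to the present setting.
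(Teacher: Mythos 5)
This statement is one of the open conjectures of \secref{sect-conjectures}: the paper offers no proof of it, and explicitly records that only the \emph{set-theoretical} version (a description of the limit subalgebras) is known, due to Shuvalov \cite{Sh}. So your text cannot be measured against an argument in the paper; it has to stand on its own, and as written it is a research plan rather than a proof. The outline is the natural one — transport the proof of \thmref{thm-closure} from the Gaudin family over $M_{0,n+1}$ to the family $\A_\mu$ over $\BP(\fh_{reg})$, using De Concini--Procesi charts in place of the charts $U_{T,\sigma}$ — but every step that made that proof work is precisely what you leave unverified. In the Gaudin case the argument rests on (i) an explicit operadic substitution map $\gamma$ on commutative subalgebras together with the freeness statement of \lemref{lem-factorization}, and (ii) the explicit residue generators $S_{l,T,I}^{(m)}$ whose limits at boundary points are computed by \lemref{lem-predel} and shown to remain algebraically independent. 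You acknowledge that the analogues of both ingredients for $\A_\mu$ are exactly your ``main obstacle,'' i.e.\ the mathematical content of the conjecture is deferred, not supplied.

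Moreover, the specific shape you propose for the boundary algebras is not obviously correct even at the classical (Poisson) level, which makes the gap more than a matter of routine verification. Shuvalov's description of the limits of Mishchenko--Fomenko subalgebras along a Levi wall is not simply an assembly of shift-of-argument subalgebras $\A^{\mathfrak{l}_i}_{\mu_i}$ of the Levis in a flag: the limit also picks up the invariants/center of the centralizer and lower-order ``derivative'' terms of the generators transverse to the wall, and it is exactly the quantum lift and flatness of this more delicate structure (no extra blow-ups, constant Poincar\'e series, regular extension of generators in each nested-set chart) that the conjecture asserts. Without constructing the substitution map and proving the analogue of \lemref{lem-predel} for the Feigin--Frenkel-type generators of $\A_\mu$ from \cite{Ryb1}, there is no argument that the parameter scheme $Z$ you build maps isomorphically to the wonderful compactification rather than to a further blow-up of it. So the proposal identifies the right strategy but does not close, or even substantially narrow, the gap that makes this a conjecture.
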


The set-theoretical version of this conjecture was proved by V.Shuvalov in 2002, see \cite{Sh}.

\begin{conj} The algebras corresponding to the real points of the above compactification act with simple spectrum on any finite-dimensional irreducible representation of $\fg$.
\end{conj}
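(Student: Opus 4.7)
The plan is to mirror the proof of Theorem~\ref{thm-cyclic}: at every real boundary point I would establish separately (a) cyclicity of $V_\l$ as an $\A_\mu$-module and (b) semisimplicity of the action of $\A_\mu$, and then combine these to deduce simple spectrum.

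The first preparatory step is to upgrade Shuvalov's set-theoretic description of the limits of the family $\A_\mu$ to a flat-family statement analogous to Theorem~\ref{thm-closure}. Shuvalov identifies each boundary point of the wonderful compactification with a chain of Levi subalgebras $\fg \supset \mathfrak{l}_1 \supset \ldots \supset \mathfrak{l}_k$ (equivalently, with a nested set of root subsystems), and presents the limiting subalgebra as generated by $\A_{\mu_j}^{\mathfrak{l}_j}$ for appropriate regular parameters $\mu_j \in \mathfrak{l}_j$, together with the centers $Z(U(\mathfrak{l}_j))$. I would verify that this limit subalgebra has the same Poincar\'e series as a generic $\A_\mu$, and factor it as an iterated tensor product of the $\A_{\mu_j}^{\mathfrak{l}_j}$'s over appropriate centers using Knop's theorem, exactly as in Lemma~\ref{lem-factorization}.

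For cyclicity at a boundary point I would induct on the semisimple rank of $\fg$. At the generic stratum, cyclicity of $V_\l$ over $\A_\mu$ is known from the work of Feigin-Frenkel-Rybnikov. At a boundary point with top Levi $\mathfrak{l} \subsetneq \fg$, decompose $V_\l$ as an $\mathfrak{l}$-module into isotypic components, $V_\l = \bigoplus_\nu W_\nu \otimes V_\nu^{\mathfrak{l}}$. The outer part of the limit subalgebra acts on each multiplicity space $W_\nu$ and contains a shift-of-argument subalgebra for a proper Levi sub-compactification, while the inner part is $\A_{\mu_L}^{\mathfrak{l}}$ acting on $V_\nu^{\mathfrak{l}}$; by the inductive hypothesis both factors are cyclic, and hence so is $V_\l$.

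For semisimplicity at real points, take $\mu$ in the real form of $\fh$ compatible with a compact real form $\fu \subset \fg$. Equipping $V_\l$ with the (essentially unique) $\fu$-invariant positive-definite Hermitian form, the Feigin-Frenkel center of $U(\fg_-)$ carries a real structure compatible with evaluation at every real point of the wonderful compactification, so the generators of $\A_\mu$ act by self-adjoint operators. A commuting family of self-adjoint operators with a cyclic vector has simple spectrum, which finishes the argument. The main obstacle I expect is the flat-family upgrade of Shuvalov's description at the deepest strata, that is, matching the iterated Levi-substitution picture with the nested-set charts of De Concini and Procesi and checking that no extra blow-ups appear (this is precisely the previous conjecture in this section). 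A secondary difficulty is verifying self-adjointness of the higher-order Feigin-Frenkel generators uniformly at all real boundary points, and not merely at regular $\mu$ where it is classical.
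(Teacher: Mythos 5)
The paper offers no proof of this statement: it is left as an open conjecture in the Discussion section, with only the type~A case asserted, and that case is obtained not by a direct argument but by transporting Theorem~\ref{thm-cyclic} through $GL_N$--$GL_n$ Howe duality. So there is no proof of record to mirror, and your proposal has to stand on its own.

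Its skeleton (cyclicity plus self-adjointness at real points implies simple spectrum) is indeed the right analogue of Theorem~\ref{thm-cyclic}, and of your two flagged worries the second is minor: being Hermitian with respect to the fixed compact-form inner product on $V_\l$ is a closed condition, so any element of a limit algebra along a real curve is a limit of Hermitian-acting elements and is itself Hermitian-acting; likewise, for simple spectrum you do not really need the flat-family upgrade, since it suffices that the limit algebra \emph{contain} the Shuvalov-type subalgebra (a larger commutative algebra can only refine joint eigenspaces). The genuine gap is in the inductive step for cyclicity. In the Gaudin induction the multiplicity space $W_{(\nu)}=\Hom_{\fg^{\oplus k}}(V_{(\nu)},\BV_{\ul{\l}})$ is an object of the same kind as the one you started with --- the singular part of a smaller tensor product of irreducibles, acted on by a smaller Gaudin algebra --- so the inductive hypothesis applies verbatim. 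In the shift-of-argument setting the multiplicity space $\Hom_{\mathfrak{l}}(V^{\mathfrak{l}}_\nu,V_\l)$ is \emph{not} an irreducible representation of a semisimple Lie algebra of smaller rank, and the ``outer'' part of Shuvalov's limit algebra lives in the centralizer $U(\fg)^{\mathfrak{l}}$ rather than in a copy of some $U(\fg')$; neither your inductive hypothesis nor the regular-$\mu$ result of \cite{FFR10} says anything about cyclicity of that factor on $W_\nu$. Already at the deepest stratum, where $\mathfrak{l}=\fh$, the needed statement is that a Gelfand--Tsetlin-type limit algebra acts cyclically on every weight space of $V_\l$, which outside type~A is exactly the open content of the conjecture; type~A is special precisely because Howe duality turns these multiplicity spaces into tensor-product multiplicity spaces where Theorem~\ref{thm-cyclic} applies. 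Thus the step ``by the inductive hypothesis both factors are cyclic'' assumes what is to be proved, and a proof along these lines needs a new idea for the multiplicity-space factor rather than a formal repetition of the Gaudin induction.
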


The conjecture is proved in type A (in fact one can deduce this from the same fact for Gaudin algebras by Howe duality). Also, it is proved in \cite{FFR10} that for real points $\mu\in \fh_{reg}$ the corresponding algebra $\A_\mu$ has simple spectrum in any irreducible $\fg$-module $V_\l$. So it remains to show that this also holds for boundary points. This gives an action of the fundamental group of the De Concini-Procesi wonderful compactification on the set of eigenvectors of $\A_\mu$ in any irreducible finite-dimensional representation $V_\l$. On the other hand, there is an action of the same fundamental group on the corresponding crystal, which can be obtained from Lusztig's braid group action on the irreducible representation of $U_q(\fg)$ on the irreducible representation $V_\l$ by Drinfeld's unitarization procedure and taking the limit $q\to\infty$.

\begin{conj}\footnote{This was independently conjectured by Joel Kamnitzer and Alex Weekes.} The above two actions of the fundamental group are isomorphic.
\end{conj}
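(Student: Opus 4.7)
The plan is to mimic the strategy used for Gaudin subalgebras in Theorem~\ref{main-theorem}. First, I would extend the family of shift-of-argument subalgebras $\A_\mu\subset U(\fg)$ to a flat family of commutative subalgebras over the De~Concini-Procesi wonderful compactification $\overline{\BP(\fh_{reg})}$, building on Shuvalov's set-theoretic description \cite{Sh} and on an operad-style formalism analogous to Section~\ref{sect-bethe-alg}. Each boundary point corresponds to a nested chain of Levi subalgebras $\fg\supset\mathfrak{l}_1\supset\mathfrak{l}_2\supset\dots\supset\fh$, and the corresponding subalgebra of $U(\fg)$ should factor as an iterated construction from shift-of-argument subalgebras of these Levis. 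A Knop-style factorization analogous to Lemma~\ref{lem-factorization} would show that this family has constant Poincar\'e series, and a multiplicity-space argument mirroring Theorem~\ref{thm-cyclic} would produce a cyclic vector, hence simple spectrum, on each real boundary point acting in $V_\l$.

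Second, I would identify the resulting monodromy covering of $\overline{\BP(\fh_{reg})}(\BR)$ with the Lusztig-based crystal action. The natural bridge is the trigonometric (dynamical) KZ equation for $U_q(\fg)$: its $\kappa\to 0$ semiclassical limit produces shift-of-argument eigenvectors in each asymptotic zone in $\fh_{reg}$ (this is the dynamical counterpart of the Feigin-Frenkel limit used in \cite{FFR10}), while its monodromy at $q=\exp(\pi i/\kappa)$ is given, by the dynamical Drinfeld-Kohno theorem, in terms of Lusztig's braid group operators $T_w$. A dynamical analogue of Theorem~\ref{thm-varchenko} asserting that the asymptotic eigenbasis in a fixed zone is sent by the Drinfeld-Kohno isomorphism to a crystal basis of $V_\l^q$ up to normalization would then translate the wall-crossing computation of Theorem~\ref{main-theorem} verbatim: the elementary generator of the Weyl cactus group indexed by a Levi $L$ acts on the classical side by a factor $q^{-\frac{1}{2}(C_L-\text{recentering shift})}T_{w_L^0}$, which by Drinfeld's unitarization and \cite{KT} is precisely the $L$-crystal commutor as $q\to\infty$.

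Third, a rank reduction would reduce the verification of the cactus-type defining relations of the Weyl cactus group $C_W$ to the rank-two root subsystems, together with the naturality under the embeddings $C_{W_L}\hookrightarrow C_W$. For $\fg$ of type $A$ this can be bypassed altogether: Howe duality $(\fgl_n,\fgl_m)$ identifies shift-of-argument subalgebras for $\fgl_n$ with Gaudin algebras for $\fgl_m$ (and matches the respective monodromy coverings), so the type-$A$ case of the conjecture follows from the Gaudin/cactus results already established. The genuinely new rank-two input is the $\fsl_3$ and $\mathfrak{g}_2$ check, which one can attempt via the Mukhin-Tarasov-Varchenko description of Bethe vectors together with explicit Kamnitzer-Tingley commutor formulas.

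The hard part will be establishing the dynamical analogue of Varchenko's Theorem~\ref{thm-varchenko} in adequate generality: one must show that, in an asymptotic Weyl chamber of $\fh_{reg}$, the canonically normalized asymptotic solutions of the trigonometric KZ equation are carried to a crystal basis of $V_\l^q$ under the dynamical Drinfeld-Kohno isomorphism. Even on the quantum-group side this requires an intrinsic description of the Drinfeld unitarization of $T_{w_L^0}$ acting on a single irreducible $V_\l^q$ (whereas \cite{KT} handles unitarization on tensor products), and on the dynamical KZ side it requires an asymptotic analysis in the spirit of \cite{Var} adapted to the Weyl chamber stratification rather than the configuration-space stratification. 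This analytic-combinatorial bridge between crystal bases and asymptotic eigenvectors of $\A_\mu$, rather than any step on the classical or combinatorial side, appears to be the principal obstacle.
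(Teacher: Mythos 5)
There is nothing in the paper to compare your argument against: the statement you were asked to prove is stated there as an open conjecture (attributed independently to Kamnitzer and Weekes), and the paper offers no proof of it. What you have written is a research program, not a proof, and by your own admission its central plank is missing. The analogue of Theorem~\ref{thm-varchenko} for the trigonometric/dynamical KZ equation --- that suitably normalized asymptotic solutions in a Weyl-chamber asymptotic zone are carried by a (dynamical) Drinfeld--Kohno isomorphism to a crystal basis of $V_\l^q$ --- is exactly the ingredient that made the $\fsl_2$ Gaudin argument of Theorem~\ref{main-theorem} work, and without it the wall-crossing computation cannot be ``translated verbatim''; you would also need an intrinsic unitarization of Lusztig's $T_{w_L^0}$ on a single irreducible, which, as you note, is not supplied by \cite{KT}. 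In addition, your first step quietly assumes two other statements that the paper itself lists only as conjectures: that the closure of the family $\A_\mu$ is parameterized by the De~Concini--Procesi compactification (only the set-theoretic version is known, by Shuvalov \cite{Sh}), and that the boundary subalgebras have cyclic vectors, hence simple spectrum, in $V_\l$ (known for interior real points by \cite{FFR10}, and in type $A$, but open in general). Without these there is no covering of $\overline{\BP(\fh_{reg})}(\BR)$ on which your fundamental group could act in the first place.

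The type-$A$ ``bypass'' is also not available as claimed. Howe duality relates shift-of-argument subalgebras of $\fgl_n$ to Gaudin algebras of $\fgl_m$, but matching the two crystal actions on the Gaudin side is precisely Conjecture~\ref{conj-etingof}, which the paper proves only for $\fg=\fsl_2$; so the ``Gaudin/cactus results already established'' do not cover the case you need, and even granting the duality of coverings you would only obtain a very restricted family of type-$A$ instances. Finally, the reduction of the cactus-type relations to rank two presupposes a presentation of $\pi_1$ of the real De~Concini--Procesi model by Levi-indexed involutions compatible with your wall-crossings; this needs to be extracted carefully from \cite{DCP} and the Davis--Januszkiewicz--Scott circle of results rather than asserted. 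In short, the proposal identifies a sensible strategy, but each of its three stages rests on an unproved input, so it does not establish the conjecture.
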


\subsection{Relation to the results of D. Speyer and K.Purbhoo.} In \cite{Sp} Speyer defines a covering of $\ol{M_{0,n+1}}(\BR)$ whose fiber at a generic point is an intersection of some Schubert varieties in certain Grassmannian.  On the other hand, Mukhin, Tarasov and Varchenko \cite{MTV07} prove that there is a 1-1 correspondence between the Bethe vectors of the $GL_n$ Gaudin model and the intersection of the same Schubert varieties in the same Grassmannian. So, it is natural to expect that the following is true:
\begin{conj}\label{conj-speyer}
The covering of $\ol{M_{0,n+1}}(\BR)$ from \cite{Sp} is isomorphic to our covering of $\ol{M_{0,n+1}}(\BR)$ from Corollary \ref{cor-covering}.
\end{conj}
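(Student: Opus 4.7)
The plan is to leverage the Mukhin--Tarasov--Varchenko correspondence \cite{MTV07} as the core identification on the open stratum $M_{0,n+1}(\BR)\subset\ol{M_{0,n+1}}(\BR)$, and then to extend it to all of $\ol{M_{0,n+1}}(\BR)$ by matching the two factorization structures at boundary points. First I would recall precisely both coverings: on our side, the fiber over $\ul{z}\in\ol{M_{0,n+1}}(\BR)$ is the set of joint eigenlines of $\A(\ul{z})$ in $\BV_{\ul{\l}}^{sing}$ (simple spectrum by \thmref{thm-cyclic}), while on Speyer's side the fiber is the scheme-theoretic intersection of Schubert varieties $X_{\l_i}(z_i)$ in an appropriate Grassmannian $\on{Gr}(d,\BC[x])$ of polynomial subspaces, where Schubert conditions are imposed using the osculating flags of the rational normal curve at the points $z_i$. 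By the MTV isomorphism, for generic $\ul{z}$ the two fibers are canonically bijective, and since both families are algebraic and finite over $M_{0,n+1}$, this gives an isomorphism of unbranched coverings over $M_{0,n+1}(\BR)$.

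The next step is to extend this isomorphism across the boundary $\partial\ol{M_{0,n+1}}(\BR)$. On the Gaudin side, \thmref{thm-closure} gives an operadic factorization: at a boundary point $\ul{z}=\gamma_{k;M_1,\ldots,M_k}(\ul{w};\ul{u_1},\ldots,\ul{u_k})$ the algebra $\A(\ul{z})$ is generated by $I_{M_i}(\A(\ul{u_i}))$ and $D_{M_1,\ldots,M_k}(\A(\ul{w}))$, and its joint spectrum on $\BV_{\ul{\l}}^{sing}$ decomposes, via the isotypic decomposition for $D_{M_1,\ldots,M_k}(\fg^{\oplus k})$, as the disjoint union over intermediate weights $(\nu)$ of the product of the spectra on the factor modules and on the multiplicity spaces. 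On Speyer's side, the analogous factorization should come from the behavior of intersections of Schubert varieties as the defining points collide: when a cluster of points $z_{i}, i\in M_j$, collapses to a single point, the Schubert intersection splits as a fibration whose base is a Schubert intersection with merged Schubert classes labelled by the intermediate partitions $\nu_j$ and whose fibers are Schubert intersections on ``smaller'' Grassmannians indexed by the clusters. This is compatible with the topological operad structure on $\ol{M_{0,n+1}}$, and MTV-type arguments (Wronskian/Bethe Ansatz) produce the same intermediate weights $\nu_j$.

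I would carry out the extension by induction on $n$, using the operad structure as the inductive step. Concretely: (i) check that Speyer's covering extends to a finite covering of $\ol{M_{0,n+1}}(\BR)$ with fibers compatible with the substitution maps $\gamma_{k;M_1,\ldots,M_k}$; (ii) verify that the MTV bijection intertwines the two factorizations, i.e.\ that the decomposition of a Bethe vector of $\A(\ul{z})$ at a boundary point into a system of Bethe vectors for $\A(\ul{u_i})$ on multiplicity spaces and $\A(\ul{w})$ on singular vectors, matches the corresponding decomposition of the Schubert intersection; (iii) invoke the inductive hypothesis on each smaller factor, yielding a bijection of fibers at the boundary point compatible with the bijection on a neighborhood; (iv) deduce by Zariski density of $M_{0,n+1}(\BR)$ in each boundary stratum and continuity of both coverings that the isomorphism over $M_{0,n+1}(\BR)$ extends uniquely to an isomorphism of coverings over $\ol{M_{0,n+1}}(\BR)$.

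The main obstacle will be step (ii): matching the operadic factorization of Gaudin subalgebras with the ``factorization'' of Schubert intersections under collision of points. This requires a careful analysis of how the Wronski map and the corresponding Schubert intersections degenerate under the limits described by \lemref{lem-predel}, and then showing that the intermediate weights $(\nu_j)$ that appear on the Gaudin side correspond exactly to the Schubert partitions labelling the strata of the degenerate Schubert intersection. One expects to reduce this matching to a local computation at a single node of a stable curve, where it should follow from the three-point case of MTV together with an analysis of opers with regular singularities on nodal rational curves, as sketched in the OPERS conjecture above. Once this local compatibility is established, propagating it along the operadic tree and across all boundary strata is formal.
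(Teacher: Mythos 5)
There is nothing in the paper to compare against here: the statement you are proving is stated as Conjecture~\ref{conj-speyer} and the paper offers no proof of it, only the remark that it is ``natural to expect'' in view of \cite{MTV07} and \cite{Sp}. So the question is whether your proposal actually closes the conjecture, and it does not. The decisive gap is your step (ii)--(iv). Over the open stratum the isomorphism you get from the MTV correspondence is essentially free of content: $M_{0,n+1}(\BR)$ is a disjoint union of the contractible chambers $D_\sigma$, so any two finite coverings with fibers of the same cardinality are isomorphic over it, and a fiberwise bijection there cannot be ``extended uniquely by density and continuity'' --- the entire content of the conjecture is how the sheets are glued across the codimension-one walls of $\ol{M_{0,n+1}}(\BR)$, i.e.\ the comparison of monodromy/wall-crossing on the two sides. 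Your step (iv) therefore does not follow; it presupposes exactly the compatibility that has to be proven.

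That compatibility is your step (ii), and there you only state an expectation: that the degeneration of Speyer's Schubert intersections under collision of the osculating points matches the operadic degeneration of the Bethe algebras from \thmref{thm-closure} and \lemref{lem-predel}, with the same intermediate weights $(\nu_j)$, and that this ``should follow'' from a local computation at a node together with an analysis of opers on nodal curves. But the latter is itself only conjectural in the paper (the $\rm{OPERS}$ coboundary-category picture is a conjecture), and no argument is given that the MTV bijection --- which for fixed generic $\ul{z}$ identifies eigenlines of $\A(\ul{z})$ with points of the Schubert intersection --- varies continuously up to and across the boundary strata, which is what would be needed to intertwine the two factorizations. So what you have is a reasonable and correctly organized strategy (and step (i) is in fact already supplied by \cite{Sp}, where the family over all of $\ol{M_{0,n+1}}(\BR)$ is shown to be a covering), but the heart of the matter --- the identification of the boundary/monodromy data on the two sides --- is left as an acknowledged obstacle, so the conjecture remains open under your proposal.
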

Theorem 1.6 of \cite{Sp} shows that the combinatorics of this covering can be described in terms of $GL_N$-crystals, so the analog of Etingof's conjecture for this covering is mostly proved in \cite{Sp}. In particular, we expect that Conjecture~\ref{conj-speyer} implies Conjecture~\ref{conj-etingof}.

In \cite{Purb}, Purbhoo studies the monodromy problem for Wronskians, and the answer is given in terms of Jeu de taquin (which is the same as crystal commutor for tautological representations of $GL_N$, due to \cite{HK}). On the other hand, the results of Mukhin, Tarasov and Varchenko imply that the monodromy problem for Wronskians is the same as the monodromy problem for Bethe eigenvalues for $GL_N$ Gaudin model with tautological representations of $GL_N$. So we expect that the results of \cite{Purb} imply Conjecture~\ref{conj-etingof} for tautological representations of $GL_N$.

\subsection{Berenstein-Kirillov group.} Let $\fg=\fgl_N$ and $V_{\l_i}$ be the symmetric powers of the standard representation $V=\BC^N$. By the $GL_N-GL_n$ Howe duality one describes the spectrum of the algebra $\A(\ul{z})$ at the \emph{caterpillar point} $\ul{z}$ of $\ol{M_{0,n+1}}$ (i.e. $\ul{z}$ corresponding to the stable rational curve having exactly $3$ distinguished points and at least $1$ marked point on each component) as the set of Gelfand-Tsetlin tables for the group $GL_n$. The latter is the set of integral points of the Gelfand-Tsetlin convex polytope. So we have an action of the pure cactus group on the Gelfand-Tsetlin polytope. On the other hand, Berenstein and Kirillov described in \cite{BK} some group generated by involutions acting on the Gelfand-Tsetlin polytope by piecewise linear transformations.

\begin{conj}
Berenstein-Kirillov group is a quotient of the pure cactus group $PJ_n$.
\end{conj}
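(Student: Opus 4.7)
The plan is to construct a surjection $PJ_n\twoheadrightarrow BK_n$ by transporting the cactus-group action on Bethe vectors, via Howe duality, into piecewise-linear transformations of the Gelfand--Tsetlin polytope, and then identifying its generators with the generators of the Berenstein--Kirillov group. First, $GL_N$--$GL_n$ Howe duality identifies $\BV_{\ul{\l}}^{sing}$ (with $V_{\l_i}=\Sym^{\l_i}(\BC^N)$) with the $(\l_1,\ldots,\l_n)$-weight subspace of an irreducible $\fgl_n$-module, in such a way that the operators $C_{[1,k]}\in U(\fgl_N)^{\otimes n}$, $k=1,\ldots,n$, correspond to the Casimirs for the chain $\fgl_1\subset\fgl_2\subset\ldots\subset\fgl_n$. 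By Theorem~\ref{thm-closure}, at a caterpillar point $\ul{z}_{\text{cat}}\in\ol{M_{0,n+1}}(\BR)$ the Gaudin algebra $\A(\ul{z}_{\text{cat}})$ is generated precisely by these operators together with the center of the diagonal $U(\fgl_N)$, so its joint eigenbasis is canonically labeled by the integer points of the Gelfand--Tsetlin polytope $GT(\ul{\l})$. Corollary~\ref{cor-covering} then yields an action of $PJ_n$ on this set.

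Assuming Conjecture~\ref{conj-etingof} for $\fgl_N$, this action coincides with the cactus-group action on the highest-weight part of $\CB_{\l_1}\otimes\cdots\otimes\CB_{\l_n}$ coming from the commutor. The same Howe duality identifies this crystal with $GT(\ul{\l})$, so each generator $s_{p,q}\in PJ_n$ acts by a piecewise-linear involution of the polytope. The crucial combinatorial step is then to match these involutions with generators of the Berenstein--Kirillov group: using the unitarized $R$-matrix formula for the commutor from~\cite{KT} together with the description of the cactus-group action on crystals in~\cite{HK}, one should verify that the image of $s_{p,q}$ under Howe duality is the Sch\"utzenberger involution applied to the sub-pattern of the GT tableau on rows $p,\ldots,q$, i.e.\ one of the standard generators $\xi_{p,q}$ of $BK_n$ from~\cite{BK}.

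Granting this identification, the assignment $s_{p,q}\mapsto\xi_{p,q}$ defines the desired homomorphism $PJ_n\to BK_n$, and it is surjective because $BK_n$ is by construction generated by the involutions $\xi_{p,q}$. The principal obstacle is the reliance on Etingof's Conjecture~\ref{conj-etingof} for $\fgl_N$, which is so far established only for $\fsl_2$. To bypass this one would proceed directly: compute the asymptotic expansion of Bethe vectors near the caterpillar stratum in the style of Section~\ref{sect-varchenko}, describe the resulting transition maps between neighboring caterpillar charts in the coordinates of $GT(\ul{\l})$ as in Section~\ref{sect-piecelinear}, and compare them term by term with the Howe dual of the Kamnitzer--Tingley commutor. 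A subsidiary combinatorial difficulty is to check that the defining cactus relations in $PJ_n$ go to relations already holding among the $\xi_{p,q}$ in $BK_n$; this, however, should be automatic from the topological origin of $PJ_n$ as $\pi_1(\ol{M_{0,n+1}}(\BR))$ once the matching of generators is in place.
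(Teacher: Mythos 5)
This statement is one of the open conjectures of Section~\ref{sect-conjectures}: the paper gives no proof of it, only the remark that it has been checked for Gelfand--Tsetlin polytopes of $2$-row Young diagrams, where the Berenstein--Kirillov generators arise from loops around the copies of $\ol{M_{0,4}}(\BR)\cong\BR\BP^1$ inside $\ol{M_{0,n+1}}(\BR)$. Your text is accordingly a programme rather than a proof, and it says so itself: it rests on Conjecture~\ref{conj-etingof} for $\fgl_N$ (established in the paper only for $\fsl_2$), and the ``crucial combinatorial step'' identifying the induced piecewise-linear involutions with the Berenstein--Kirillov generators is left as something ``one should verify''. Neither of these gaps is filled by the material actually proved in the paper (Theorem~\ref{thm-closure}, Theorem~\ref{thm-cyclic}, Corollary~\ref{cor-covering} give you the covering and the $PJ_n$-action on GT patterns at a caterpillar point, but nothing more).

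There is also a concrete structural error in the proposed homomorphism. The elements $s_{p,q}$ generate the full cactus group $J_n$, not the pure cactus group: $s_{p,q}$ maps to the nontrivial involution $\ol{s_{p,q}}\in S_n$, so it does not lie in $PJ_n=\Ker(J_n\to S_n)$, and it does not even act on a fixed fiber $B_{\ul{\l}}(\ul{z})$ (equivalently on a fixed GT polytope) unless $\l_p,\ldots,\l_q$ are all equal, since it permutes the highest weights. Hence the assignment $s_{p,q}\mapsto\xi_{p,q}$ cannot define a map out of $PJ_n$. What the conjecture requires is that genuinely pure elements --- in the author's partial verification, the loops around the embedded $\ol{M_{0,4}}(\BR)$'s --- map onto the Bender--Knuth/partial-evacuation generators of the Berenstein--Kirillov group; and beyond surjectivity onto generators one must also show the image of $PJ_n$ lands \emph{inside} the Berenstein--Kirillov group, i.e.\ that every monodromy transformation is a composition of its generators, which your argument does not address. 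So the proposal identifies the right circle of ideas (Howe duality, caterpillar points, the crystal commutor), but it is not a proof of the statement, nor does the paper contain one.
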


We checked this for Gelfand-Tsetlin polytopes corresponding to 2-row Young diagrams. It turns out that the involutions generating Berenstein-Kirillov group come from the loops around the $\BR\BP^1$'s embedded as $\ol{M_{0,4}}(\BR)\subset \ol{M_{0,n+1}}(\BR)$.

\end{document}